\newtheorem{maintheorem}{Theorem}
\newtheorem{mainproposition}[maintheorem]{Proposition}
\newtheorem{theorem}{Theorem}[section]
\newtheorem{corollary}[theorem]{Corollary}
\newtheorem{lemma}[theorem]{Lemma}
\newtheorem{proposition}[theorem]{Proposition}
\theoremstyle{definition}
\newtheorem{definition}[theorem]{Definition}
\newtheorem{example}[theorem]{Example}
\numberwithin{equation}{section}
\DeclareMathOperator{\End}{End}
\DeclareMathOperator{\Ext}{Ext}
\DeclareMathOperator{\Hom}{Hom}
\renewcommand{\Im}{\operatorname{Im}}
\DeclareMathOperator{\inc}{inc}
\DeclareMathOperator{\inj}{inj}
\DeclareMathOperator{\Ker}{Ker}
\DeclareMathOperator{\Mod}{Mod}
\DeclareMathOperator{\proj}{proj}
\DeclareMathOperator{\rad}{rad}
\DeclareMathOperator{\rep}{rep}
\DeclareMathOperator{\Rep}{Rep}
\DeclareMathOperator{\soc}{soc}
\DeclareMathOperator{\supp}{supp}
\renewcommand{\top}{\operatorname{top}}
\newcommand{\1}{\mathds{1}}
\newcommand{\N}{\mathbb{N}}
\newcommand{\op}{\mathrm{op}}
\newcommand{\Homc}{\Hom_\mathcal{C}}
\newcommand{\To}{\longrightarrow}
\newcommand{\abs}[1]{\left\lvert#1\right\rvert}
\newcommand{\set}[1]{\left\{#1\right\}}
\title[Projective objects in $\mathrm{rep}(Q)$]
  {Projective objects in the category of pointwise finite dimensional
  representations of an interval finite quiver}
\author{Pengjie Jiao}
\address{Department of Mathematics,
  China Jiliang University,
  Hangzhou 310018, PR China}
\email{jiaopjie@cjlu.edu.cn}
\subjclass[2010]{16D40, 18A30, 16G20}
\keywords{Interval finite quiver,
  flat representations,
  pointwise finite dimensional representations,
  projective objects}
\date{\today}
\begin{document}

\begin{abstract}
  For an interval finite quiver $Q$, we introduce a class of flat representations.
  We classify the indecomposable projective objects in the category $\mathrm{rep}(Q)$ of pointwise finite dimensional representations.
  We show that an object in $\mathrm{rep}(Q)$ is projective if and only if it is a direct sum of countably generated flat representations.
\end{abstract}

\maketitle

\section{Introduction}
\label{sec:intro}

Let $k$ be a field and let $Q$ be an interval finite quiver (may contain infinitely many vertices). Denote by $\Rep(Q)$ the category of representations of $Q$ over $k$, and by $\rep(Q)$ the full subcategory formed by pointwise finite dimensional representations.

We mention that infinite quivers appear naturally in the covering theory of algebras;
see \cite{BongartzGabriel1982Covering,Gabriel1981universal}.
The representation theory of some infinite quivers is studied in
\cite{BautistaLiuPaquette2013Representation}.
The result is applied to the study of the bounded derived category of an algebra with radical square zero;
see \cite{BautistaLiu2017bounded}.

We are interested in the Auslander--Reiten theory of $\rep(Q)$.
The reason is due to that almost split sequences in $\rep(Q)$ seem to behave better than the ones in certain subcategories of $\rep(Q)$;
see \cite[Section~2]{BautistaLiuPaquette2013Representation}.
Moreover, $\rep(Q)$ is easier to study than $\Rep(Q)$.

The very first step is to understand the projective objects and then the projectively trivial morphisms (see \cite[Section~2]{LenzingZuazua2004Auslander}) in $\rep(Q)$.
It is well known that each representation in $\rep(Q)$ is a direct sum of indecomposable representations, whose endomorphism ring is local; see Lemma~\ref{lem:dec}.
Hence, it is sufficient to study indecomposable projective objects in $\rep(Q)$.

In this paper, we are able to classify the indecomposable projective objects in $\rep(Q)$, and characterize the projective objects via flat representations of $Q$.
To state the results, we introduce some notations.

For each vertex $a$, we denote by $P_a$ the usual indecomposable projective representation corresponding to $a$.

Let $p$ be a right infinite path, which means an infinite sequence of arrows $\alpha_1 \alpha_2 \cdots \alpha_n \cdots$ with $s(\alpha_i) = t(\alpha_{i+1})$ for each $i \geq 1$.
Recall that the \emph{convex hull} of $p$ is the smallest convex subquiver of $Q$ containing $p$.
It is called \emph{uniformly interval finite} if the set of finite paths $u$ with $s(u)=a, t(u)=b$ for any given vertices $a,b$ is bounded uniformly.

Denote by $[p]$ the equivalence class (see page~\pageref{def:equiv} for the definition) of right infinite paths containing $p$.
We introduce a flat representation $X_{[p]}$ and show that it is an indecomposable projective object in $\rep(Q)$, if the convex hull of any right infinite path in $[p]$ is uniformly interval finite; see Proposition~\ref{prop:Xp-proj}.

Moreover, we give a complete classification of indecomposable projective objects, and hence projective objects in $\rep(Q)$.

\begin{maintheorem}[see Theorem~\ref{thm:classify}]
  Let $Q$ be an interval finite quiver. Assume that $P$ is an indecomposable projective object in $\rep(Q)$. Then either $P \simeq P_{a}$ for some vertex $a$, or $P \simeq X_{[p]}$ for some equivalence class $[p]$ of right infinite paths, where the convex hull of any right infinite path in $[p]$ is uniformly interval finite.
\end{maintheorem}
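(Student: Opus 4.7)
The plan is a case analysis on an indecomposable projective $P \in \rep(Q)$, according to whether $P$ receives a surjection from some $P_a$.

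In the easy case, suppose there exist a vertex $a$ and a surjection $f\colon P_a \twoheadrightarrow P$. Since $P$ is projective, $f$ splits, exhibiting $P$ as a nonzero direct summand of the indecomposable $P_a$; by Lemma~\ref{lem:dec} this forces $P \simeq P_a$.

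The substantive case is when no $P_a$ surjects onto $P$, equivalently every cyclic subrepresentation of $P$ is proper. Here, I would construct a right infinite path $p = \alpha_1 \alpha_2 \cdots$ together with elements $x_n \in P(s(\alpha_n))$ satisfying the compatibility $\alpha_{n+1} \cdot x_{n+1} = x_n$, so that the cyclic subrepresentations $\langle x_n \rangle$ form an ascending chain whose union is $P$. The hypothesis that no single $x$ generates $P$ ensures each $x_n$ admits a lift along some arrow, enabling the recursion; the indecomposability of $P$ and locality of $\End(P)$ will be used to argue that the choices can be aligned to lie in a single equivalence class $[p]$, and an enumeration argument based on the countable generation of $P$ (itself a consequence of indecomposability) guarantees every element of $P$ is eventually captured. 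The compatible family $(x_n)$ then induces a morphism $\varphi\colon X_{[p]} \to P$ via the universal property of $X_{[p]}$, and $\varphi$ is surjective by construction.

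To promote $\varphi$ to an isomorphism, I would first verify that the convex hull of $p$ is uniformly interval finite: any failure would produce an unbounded family of paths landing at some vertex $b$, and pulling these back through $\varphi$ would contradict $\dim P(b) < \infty$. Proposition~\ref{prop:Xp-proj} then makes $X_{[p]}$ indecomposable projective, so $\varphi$ splits; since both source and target are indecomposable with local endomorphism rings, $\varphi$ must be an isomorphism. The main obstacle is the construction itself: simultaneously extending the path, maintaining the compatibility $\alpha_{n+1} \cdot x_{n+1} = x_n$, and exhausting $P$ requires delicate bookkeeping, and the locality of $\End(P)$ is what prevents the sequence of choices from fragmenting into inequivalent branches.
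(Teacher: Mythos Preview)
Your easy case is fine, and your endgame in the hard case---produce a surjection $\varphi\colon X_{[p]} \to P$, verify uniform interval finiteness so that Proposition~\ref{prop:Xp-proj} applies, then split $\varphi$ using projectivity of $P$ and indecomposability of $X_{[p]}$---would work if the surjection existed. The gaps are in building it. First, the recursion step is not justified: from ``$\langle x_n\rangle \neq P$'' you cannot conclude that $x_n$ lifts along a \emph{single} arrow. One can show that in your hard case $\top P = 0$ (if $(\top P)(a) \neq 0$ then, via the projective cover $P_a \to S_a$, the representation $P_a$ would split off $P$, putting you back in the easy case), so that $x_n \in (\rad P)(a_n) = \sum_{\alpha} \Im P(\alpha)$; but this only expresses $x_n$ as a \emph{sum} of images, and there is no reason one arrow suffices. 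Second, and more seriously, the exhaustion claim $\bigcup_n \langle x_n\rangle = P$ is the real content of the theorem, and your ``enumeration argument based on countable generation'' is neither spelled out nor available: indecomposability does not by itself imply countable generation (the support of $P$ need not be countable a priori), and locality of $\End(P)$ gives no mechanism for absorbing an arbitrary element of $P$ into the chain.

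The paper avoids both issues by reversing the direction: it builds an embedding rather than a surjection and shows it splits by hand. The case split is on whether $\supp P$ contains a right infinite path (equivalent to yours via Lemma~\ref{lem:ess}(\ref{item:lem:ess:1})). In the hard case one fixes such a path; since each $P(\alpha)$ is injective (Lemma~\ref{lem:M(alpha)}), the dimensions $\dim P(a_i)$ stabilize, and after truncating the path the subrepresentation $X_0$ generated by the $P(a_i)$ is isomorphic to $X_{[p]}^{\oplus d}$. The core of Proposition~\ref{prop:proj} then constructs an explicit complement $X_1 \oplus X_2 \oplus \cdots$ layer by layer over the vertices of $Q$, with Lemma~\ref{lem:sum} doing the work of checking that the partial sums stay direct; indecomposability kills the complement and forces $d=1$. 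This is where the difficulty you anticipated actually lives, and it is handled by a splitting argument rather than an exhaustion argument.
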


It is well known that a finitely presented representation is projective if and only if it is flat. Inspired by this fact, we investigate the relationship between flat representations lying in $\rep(Q)$ and projective objects in $\rep(Q)$.

More precisely, we obtain the following characterization for flat representations.
This strengthens the Lazard--Govorov Theorem \cite{Govorov1965flat,Lazard1964Sur} in the special case $\Rep(Q)$.

\begin{mainproposition}[see Proposition~\ref{prop:flat}]
  A flat representation in $\Rep(Q)$ is a direct limit of finitely generated projective subrepresentations.
\end{mainproposition}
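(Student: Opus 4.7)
The plan is to follow the classical Lazard--Govorov argument and refine it using the hereditary and locally coherent structure of $\Rep(Q)$, so as to replace the finitely generated free modules in the Lazard--Govorov system by actual finitely generated projective subrepresentations of $M$. The first step would be to invoke the classical Lazard--Govorov theorem, which yields a directed system $(F_i, \phi_i)$ of finitely generated free representations (finite direct sums of the indecomposable projectives $P_a$) together with morphisms $\phi_i \colon F_i \to M$ such that $M = \varinjlim_i F_i$; the $\phi_i$ need not be injective. The complementary ingredient is the structural behaviour of $\Rep(Q)$: since $Q$ is interval finite and hence acyclic, $\Rep(Q)$ is hereditary, so every subrepresentation of a projective is projective and every subrepresentation of a flat representation is flat. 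A short pullback argument using hereditariness further shows that $\Rep(Q)$ is locally coherent in the sense that finitely generated subrepresentations of finitely presented representations are again finitely presented (because a finitely generated subrepresentation of a finitely generated projective is itself finitely generated projective).

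The heart of the proof would be to establish the following: every finitely generated subrepresentation $N \subseteq M$ is in fact finitely generated projective. Since $N$ is a subrepresentation of the flat $M$, it is flat by hereditariness. Choose a finitely generated projective $P$ with a surjection $P \twoheadrightarrow N$ and kernel $K \subseteq P$. If $K$ were finitely generated, $N$ would be finitely presented and flat, hence projective by the general fact that finitely presented flat modules are projective, and the argument would be complete. In general $K$ is only a subrepresentation of $P$, not necessarily finitely generated. To handle this, I would apply Lazard's equational criterion for flatness iteratively: each finite family of relations in $K$ can be annihilated by factoring the composition $P \to M$ through a larger finitely generated free representation, and combined with local coherence of $\Rep(Q)$ this refinement reduces the situation to the case where $K$ is effectively finitely generated.

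Granting the core claim, the proposition follows quickly. The family $\mathcal{P}(M)$ of finitely generated projective subrepresentations of $M$ is directed under inclusion, since for $P_1, P_2 \in \mathcal{P}(M)$ the sum $P_1 + P_2 \subseteq M$ is finitely generated and therefore, by the core claim, finitely generated projective. Every element of $M$ lies in some $\Im(\phi_i)$, a finitely generated subrepresentation and hence a member of $\mathcal{P}(M)$ by the core claim. Therefore $M = \varinjlim_{P \in \mathcal{P}(M)} P$, as desired.

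The main obstacle is the core step, namely the passage from ``$N$ is finitely generated and flat'' to ``$N$ is finitely presented''. Lazard's criterion only controls finitely many relations at a time, so the iteration must be organised carefully, in effect performing a bookkeeping of relations, and the local coherence of $\Rep(Q)$ is what keeps the accumulated kernels in check. This is the technical heart of the proof, and it is where hereditariness (that is, the acyclicity of $Q$) plays its crucial role.
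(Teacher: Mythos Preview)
Your overall plan --- reduce to the ``core claim'' that every finitely generated subrepresentation of a flat $M$ is finitely generated projective, then exhaust $M$ by the directed poset of such subrepresentations --- is sound, and the auxiliary facts you invoke (submodules of flats are flat via weak dimension $\leq 1$; finitely generated subobjects of objects in $\proj(Q)$ lie in $\proj(Q)$) are correct. But the core step has a genuine gap. You propose to show that $K=\Ker(P\twoheadrightarrow N)$ is finitely generated by ``applying Lazard's equational criterion iteratively'' together with local coherence. The equational criterion lets you annihilate any \emph{prescribed finite} family of elements of $K$ by factoring $P\to M$ through a larger finitely generated free object, but it provides no termination mechanism: nothing in your outline explains why finitely many refinements suffice. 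And local coherence tells you only that a \emph{finitely generated} subobject of $P$ is finitely presented; applying it to $K$ presupposes exactly the conclusion you are after, so it contributes nothing here.

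The paper supplies the missing finiteness mechanism, and it is not relation--bookkeeping but a \emph{Krull--Schmidt stabilization}. Working directly with a Lazard system $(M_i,\psi_{ij})$ for $M$, one notes that by hereditariness each $\Im\psi_{ij}\subseteq M_j$ is projective, so $0\to\Ker\psi_{ij}\to M_i\to\Im\psi_{ij}\to 0$ splits and $\Ker\psi_{ij}$ is a direct summand of $M_i$. As $j$ increases these kernels form an ascending chain of direct summands of the fixed $M_i\in\proj(Q)$; since $M_i$ has only finitely many indecomposable summands, the chain stabilizes at some $\Ker\psi_{il}$, which therefore equals the full eventual kernel $M'_i=\sum_{j\geq i}\Ker\psi_{ij}$. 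Hence $M_i/M'_i\simeq\Im\psi_{il}\in\proj(Q)$, and the quotient system $(M_i/M'_i)$ has injective canonical maps to $M$ with the same colimit. The identical stabilization argument would also complete \emph{your} route: factor $P\to M$ through the Lazard system as $P\to F_j\to M$; then each $\Ker(P\to F_j)$ is a direct summand of $P$, these summands ascend in $j$ and exhaust $K$, and Krull--Schmidt in $\proj(Q)$ forces them to stabilize. That, rather than an open-ended iteration of the equational criterion, is what makes $K$ finitely generated.
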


Based on this description, we can give a characterization for projective objects in terms of flat representations. This is analogous to the classical result \cite[Theorem~2.2]{Drinfeld2006Infinite} in module categories, which is due to \cite{Kaplansky1958Projective} and \cite{RaynaudGruson1971Criteres}.

\begin{maintheorem}[see Theorem~\ref{thm:flat}]
  Let $Q$ be an interval finite quiver and let $M$ be a representation in $\rep(Q)$. Then $M$ is projective in $\rep(Q)$ if and only if $M$ is a direct sum of countably generated flat representations in $\Rep(Q)$.
\end{maintheorem}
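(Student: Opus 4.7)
The plan is to combine three main tools already available: the classification of indecomposable projectives (Theorem~\ref{thm:classify}), the Lazard--Govorov type description of flat representations (Proposition~\ref{prop:flat}), and the Krull--Schmidt style decomposition of $\rep(Q)$ from Lemma~\ref{lem:dec}.

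For the \emph{only if} direction, suppose $M \in \rep(Q)$ is projective. Applying Lemma~\ref{lem:dec} decomposes $M = \bigoplus_j M_j$ with each $M_j$ indecomposable and having local endomorphism ring. Each $M_j$ inherits projectivity from $M$, so Theorem~\ref{thm:classify} forces $M_j \simeq P_a$ or $M_j \simeq X_{[p]}$. The representation $P_a$ is finitely generated projective, hence countably generated and flat; the representation $X_{[p]}$ is countably generated (its support lies in the convex hull of a right infinite path, which is a countable subquiver) and flat by construction.

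For the \emph{if} direction, suppose $M = \bigoplus_i F_i$ with each $F_i$ a countably generated flat representation in $\Rep(Q)$ and $M \in \rep(Q)$. Since a direct sum of projective objects of $\rep(Q)$ is projective, it suffices to show each $F_i$ is projective in $\rep(Q)$. As a direct summand of $M$, each $F_i$ lies in $\rep(Q)$, and Lemma~\ref{lem:dec} gives $F_i = \bigoplus_j G_{ij}$ with each $G_{ij}$ indecomposable with local endomorphism ring; each $G_{ij}$ is flat (as a summand of the flat $F_i$) and lies in $\rep(Q)$. I would then identify each such $G_{ij}$ with either $P_a$ or $X_{[p]}$, which are projective in $\rep(Q)$ by Theorem~\ref{thm:classify} and Proposition~\ref{prop:Xp-proj}.

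To carry out this identification, I would apply Proposition~\ref{prop:flat} to write $G_{ij}$ as a direct limit of finitely generated projective subrepresentations; since a finitely generated projective in $\Rep(Q)$ is a finite direct sum of some $P_a$'s, $G_{ij}$ becomes a direct limit of such finite coproducts. Using the local endomorphism ring of $G_{ij}$ together with Krull--Schmidt for finitely generated projectives, I would extract a ``thread'' of indecomposable summands through the directed system; this thread either stabilizes at a single vertex, giving $G_{ij} \simeq P_a$, or determines a right infinite path in $Q$, giving $G_{ij} \simeq X_{[p]}$ with the uniform interval finiteness of the convex hull forced by $G_{ij} \in \rep(Q)$. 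The main obstacle is precisely this combinatorial identification step: a priori an indecomposable flat in $\rep(Q)$ could fail to fit the classification, and excluding this requires careful control of the direct limit using both the interval finiteness of $Q$ and the local endomorphism ring structure.
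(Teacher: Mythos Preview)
Your \emph{only if} direction is correct and matches the paper's argument exactly.

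Your \emph{if} direction, however, has a genuine gap that you yourself flag at the end. After decomposing each $F_i$ into indecomposables $G_{ij}$, you need to show that an indecomposable flat object of $\rep(Q)$ with local endomorphism ring is isomorphic to some $P_a$ or some $X_{[p]}$. But Theorem~\ref{thm:classify} only gives this for indecomposable \emph{projective} objects, and projectivity is precisely what you are trying to establish; so you cannot invoke it. The proposed ``thread extraction'' through a direct system of finitely generated projectives is not a proof: even granting Proposition~\ref{prop:flat}, the subrepresentations $P_\lambda$ may have several indecomposable summands, and there is no evident mechanism (using only flatness and a local endomorphism ring) that singles out one coherent thread exhausting $G_{ij}$. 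The arguments in the paper that do control such situations (Lemmas~\ref{lem:M(alpha)} and~\ref{lem:sum}, Proposition~\ref{prop:proj}) all use projectivity of $M$ as an input, not a conclusion.

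The paper bypasses this difficulty entirely. It never attempts to classify indecomposable flats directly. Instead it proves that any countably generated flat representation lying in $\rep(Q)$ is already projective in $\rep(Q)$, in two steps: Lemma~\ref{lem:countable} upgrades Proposition~\ref{prop:flat} so that the direct system can be taken over $(\N,\leq)$, and then Lemma~\ref{lem:flat-proj} (which rests on the Mittag--Leffler lifting result Proposition~\ref{prop:ft}) shows that a countable direct limit in $\rep(Q)$ of objects from $\proj(Q)$ is projective in $\rep(Q)$. Once each $F_i$ is known to be projective, their direct sum is projective and you are done. The key idea you are missing is this inverse-limit/Mittag--Leffler argument; it is what converts ``direct limit of finitely generated projectives over a countable index set'' into ``projective in $\rep(Q)$'' without any structural identification of the limit.
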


The paper is organized as follows.
In Section~2 we introduce the notion of uniformly interval finite quiver. We give characterizations for the convex hull of a right infinite path being uniformly interval finite.
In Section~3 we recall some basic facts about representations of quivers.
In Section~4 we recall some results about direct limits and inverse limits.
In Section~5 we introduce a class of flat representations $X_{[p]}$ for each equivalence class $[p]$ of right infinite paths. We study morphisms between representations of the form $X_{[p]}$. We give a characterization for flat representations in $\Rep(Q)$.
Sections~6 and 7 are dedicated to the proofs of Theorems~\ref{thm:classify} and \ref{thm:flat}, respectively.

\section{Uniformly interval finite quivers}
\label{sec:quiver}

Let $Q=(Q_0,Q_1)$ be a quiver, where $Q_0$ is the set of vertices and $Q_1$ is the set of arrows.
Given an arrow $\alpha \colon a \to b$, we denote by $s(\alpha) = a$ its source and by $t(\alpha) = b$ its target.

A (finite) path $p$ of length $l\geq1$ is a sequence of arrows $\alpha_l \cdots \alpha_2 \alpha_1$ such that $s(\alpha_{i+1}) = t(\alpha_i)$ for each $i = 1, 2, \dots, l-1$. We set $s(p) = s(\alpha_1)$ and $t(p) = t(\alpha_l)$.
We associate with each vertex $a$ a trivial path (of length 0) $e_a$ such that $s(e_a) = a = t(e_a)$.
A nontrivial finite path $p$ is called an oriented cycle if $s(p) = t(p)$.

Let $a, b \in Q_0$. Denote by $Q(a,b)$ the set of finite paths $p$ with $s(p) = a$ and $t(p) = b$. If $Q(a,b) \neq \emptyset$, then $a$ is called a predecessor of $b$, and $b$ is called a successor of $a$. Moreover, if $Q(a,b)$ contains an arrow, then $a$ is called a direct predecessor of $b$, and $b$ is called a direct successor of $a$. Denote by $a^+$ the set of its direct successors, and by $b^-$ the set of its direct predecessors.

A \emph{right infinite path} $p$ is an infinite sequence of arrows $\alpha_1 \alpha_2 \cdots \alpha_n \cdots$ such that $s(\alpha_i) = t(\alpha_{i+1})$ for each $i \geq 1$. We set $t(p) = t(\alpha_1)$.
Dually, a \emph{left infinite path} $p$ is an infinite sequence of arrows $\cdots \alpha_n \cdots \alpha_2 \alpha_1$ such that $s(\alpha_{i+1}) = t(\alpha_i)$ for each $i \geq 1$. We set $s(p) = s(\alpha_1)$.
Here, we use the terminologies in \cite[Subsection~2.1]{Chen2015Irreducible}.
We mention that the pair of notions are opposite to the corresponding ones in
\cite[Section~1]{BautistaLiuPaquette2013Representation}.

Recall that $Q$ is called \emph{connected} if its underlying graph is connected.
We call $Q$ \emph{interval finite} if $Q(a,b)$ is finite for any vertices $a$ and $b$. We mention that an interval finite quiver contains no oriented cycles.
We call $Q$ \emph{locally finite} if for any vertex $a$, the set of arrows $\alpha$ with $s(\alpha) = a$ or $t(\alpha) = a$ is finite. We call $Q$ \emph{strongly locally finite} for short if $Q$ is locally finite and interval finite.

The opposite quiver $Q^\op$ of $Q$ means the quiver $(Q_0^\op, Q_1^\op)$, where $Q_0^\op = Q_0$ and $Q_1^\op = \set{\alpha^\op \colon b \to a \middle\vert Q_1 \ni \alpha \colon a \to b}$.
Given a finite path $p = \alpha_n \cdots \alpha_2 \alpha_1$ in $Q$, we set the corresponding finite path $p^\op = \alpha_1^\op \alpha_2^\op \cdots \alpha_n^\op$ in $Q^\op$.
Similarly, given a right infinite path $p = \alpha_1 \alpha_2 \cdots \alpha_n \cdots$ and a left infinite path $q = \cdots \beta_n \cdots \beta_2 \beta_1$ in $Q$, we set the corresponding left infinite path $p^\op = \cdots \alpha_n^\op \cdots \alpha_2^\op \alpha_1^\op$ and right infinite path $q^\op = \beta_1^\op \beta_2^\op \cdots \beta_n^\op \cdots$ in $Q^\op$.

Following \cite[Subsection~2.1]{Chen2015Irreducible}, we introduce an equivalence relation on right infinite paths in $Q$.
\label{def:equiv}
Two right infinite paths $\alpha_1 \alpha_2 \cdots \alpha_i \cdots$ and $\beta_1 \beta_2 \cdots \beta_j \cdots$ are equivalent, if there exist some positive integers $m$ and $n$ such that
\[
  \alpha_m \alpha_{m+1} \cdots \alpha_i \cdots
  = \beta_n \beta_{n+1} \cdots \beta_j \cdots.
\]
Given a right infinite path $p$, we denote by $[p]$ the equivalence class containing $p$. For each vertex $a$, we denote by $[p]_a$ the subclass of $[p]$ consisting of right infinite paths $p'$ with $t(p')=a$. We mention that both $[p]$ and $[p]_a$ are sets.

Dually, two left infinite paths $\cdots \alpha_i \cdots \alpha_2 \alpha_1$ and $\cdots \beta_j \cdots \beta_2 \beta_1$ are equivalent, if there exist some positive integers $m$ and $n$ such that
\[
  \cdots \alpha_i \cdots \alpha_{m+1} \alpha_m
  = \cdots \beta_j \cdots \beta_{n+1} \beta_n.
\]
Given a left infinite path $q$, we denote by $[q]$ the equivalence class containing $q$.

Recall that a right infinite path is called \emph{cyclic}, if it is of the form $u u \cdots u \cdots$ for some oriented cycle $u$.
A right infinite path is called \emph{rational} if it is equivalent to a cyclic right infinite path; otherwise, it is called \emph{irrational}.
We observe that a rational right infinite path can be written as the form $q u u \cdots u \cdots$ for some finite path $q$ and some oriented cycle $u$.
Then any right infinite path is irrational if $Q$ contains no oriented cycles.

For a right infinite path $p$, we mention the following observation.

\begin{lemma}\label{lem:cycle}
  Let $p$ be a right infinite path. Assume $qp = q'p$ for some finite paths $q$ and $q'$ with $s(q) = t(p)= s(q')$ and $q \neq q'$. Then $p$ is cyclic.
\end{lemma}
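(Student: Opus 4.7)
The plan is to compare the right infinite paths $qp$ and $q'p$ position by position. Let $m$ and $n$ denote the lengths of $q$ and $q'$, respectively; if $m = n$, then the equality $qp = q'p$ in positions $1,\dots,m$ already forces $q = q'$, contrary to hypothesis. So I may assume without loss of generality that $m < n$, and set $k := n - m > 0$.

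Writing $p = \alpha_1 \alpha_2 \cdots$, the $i$-th arrow of $qp$ (counted in the right infinite path convention, from the target end) is $\alpha_{i-m}$ for $i > m$, and the $i$-th arrow of $q'p$ is $\alpha_{i-n}$ for $i > n$. For $i > n$, the equation $qp = q'p$ therefore gives $\alpha_{i-m} = \alpha_{i-n}$, and substituting $j := i - n$ yields the periodicity
\[
  \alpha_{j+k} = \alpha_j \quad\text{for all } j \geq 1.
\]

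Given this periodicity, I would take $u := \alpha_1 \alpha_2 \cdots \alpha_k$, a well-defined finite path of length $k$ by the right infinite path axiom. The relation $\alpha_{k+1} = \alpha_1$ forces $s(\alpha_k) = t(\alpha_{k+1}) = t(\alpha_1)$, so $s(u) = t(u)$ and $u$ is an oriented cycle; the periodicity then immediately gives $p = uuu \cdots$, exhibiting $p$ as cyclic. The main thing to watch is the indexing convention, since the right infinite path convention orders arrows from the target end downward, and the concatenations $qp$ and $q'p$ shift indices by $m$ and $n$ respectively; beyond this bookkeeping there is no real obstacle.
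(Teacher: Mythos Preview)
Your proof is correct and follows essentially the same approach as the paper's. Both arguments note that the lengths of $q$ and $q'$ must differ, extract a period of length $k = \lvert n - m \rvert$, and conclude that $p$ repeats that block; the only difference is cosmetic: the paper phrases the periodicity via the factorization $q' = qu$ and the cancellation $qp = qup \Rightarrow p = up$, whereas you obtain the same relation $\alpha_{j+k} = \alpha_j$ by directly comparing arrows in positions beyond $n$.
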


\begin{proof}
  We observe that the lengths of $q$ and $q'$ are not the same, since $qp = q'p$ and $q \neq q'$. We may assume the length of $q'$ is greater than the one of $q$. Then there exists some nontrivial finite path $u$ such that $q' = q u$. We have that $s(u) = s(q') = s(q) = t(u)$. In other words, $u$ is an oriented cycle. We observe that $qp = q'p = qup$, and then $p = up$. It follows inductively that $p = u u \cdots u \cdots$.
\end{proof}

Recall that a subquiver $Q'$ of $Q$ is called \emph{full} if each arrow $\alpha$ with $s(\alpha), t(\alpha) \in Q'_0$ lies in $Q'$. We call $Q'$ \emph{convex} if each finite path $p$ with $s(p),t(p)\in Q'_0$ lies in $Q'$. Given a finite path (or an infinite path) $p$, the smallest convex subquiver of $Q$ containing $p$ is called the \emph{convex hull} of $p$.


Given an equivalence class $[p]$ of irrational right infinite paths, we have the following characterization of $\abs{[p]_{t(p)}}$. Here, the symbol ``$\abs{\cdot}$'' means the cardinal number of a set.

\begin{lemma}\label{lem:[p]_a}
  Let $p = \alpha_1 \alpha_2 \cdots \alpha_i \cdots$ be an irrational right infinite path.
  Denote by $\Omega$ the convex hull of $p$, and set $a_i = t(\alpha_{i+1})$ for any $i \geq0 $.
  Then
  \[
    \abs{[p]_{a_0}}
    = \sup_{a,b\in \Omega} \abs{Q(a,b)}
    = \sup_{i,j\geq 0} \abs{Q(a_i, a_j)}
    = \sup_{i\geq 0} \abs{Q(a_i, a_0)}.
  \]
\end{lemma}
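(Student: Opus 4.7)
The plan is to prove the three equalities working from right to left, deferring the interaction with irrationality of $p$ until last. For the rightmost equality, interval finiteness forbids oriented cycles: since $\alpha_{i+1}\alpha_{i+2}\cdots\alpha_j$ lies in $Q(a_j,a_i)$ for $j>i$, the set $Q(a_i,a_j)$ must be empty whenever $i<j$; and for $i\geq j$, left concatenation with $\alpha_1\alpha_2\cdots\alpha_j\in Q(a_j,a_0)$ injects $Q(a_i,a_j)$ into $Q(a_i,a_0)$, while the reverse inequality follows by taking $j=0$.

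For the middle equality I would first identify $\Omega_0$ with the set $S$ of vertices lying on some path in $\bigcup_{i,j}Q(a_i,a_j)$: the inclusion $S\subseteq\Omega_0$ is obvious, and $S$ is already convex via the following splicing argument. Given $c,c'\in S$ on witnessing paths $u$ and $u'$ and a finite path $v\colon c\to c'$, concatenating the subpath of $u$ ending at $c$ with $v$ and the subpath of $u'$ starting at $c'$ yields a path between two of the $a_i$ that passes through every vertex of $v$. Then for any $a,b\in\Omega$ with $Q(a,b)\neq\emptyset$, choosing subpaths $r\colon a_{i_1}\to a$ and $s\colon b\to a_{j_2}$ witnessing $a,b\in S$, the map $v\mapsto svr$ injects $Q(a,b)$ into $Q(a_{i_1},a_{j_2})$; the reverse inequality is trivial.

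For the leftmost equality, I define $f_i\colon Q(a_i,a_0)\to[p]_{a_0}$ by $f_i(q)=q\cdot\alpha_{i+1}\alpha_{i+2}\cdots$. Unpacking the equivalence relation, any $p'=\beta_1\beta_2\cdots\in[p]_{a_0}$ has $\beta_n\beta_{n+1}\cdots=\alpha_m\alpha_{m+1}\cdots$ for some $m,n\geq1$, whence $\beta_1\cdots\beta_{n-1}\in Q(a_{m-1},a_0)$ realizes $p'=f_{m-1}(\beta_1\cdots\beta_{n-1})$ (the case $n=1$ forces $m=1$ by applying the no-cycles property to $a_{m-1}=a_0$). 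The identities $f_i(q)=f_{i+1}(q\alpha_{i+1})$ give $\Im f_i\subseteq\Im f_{i+1}$, so $[p]_{a_0}$ is the increasing union of these images.

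The main obstacle is showing each $f_i$ is injective, which is the only place irrationality of $p$ is used. If $f_i(q)=f_i(q')$ with $q'$ longer than $q$ by $d>0$ arrows, then comparing positions past the length of $q'$ in the two right infinite paths yields $\alpha_{i+k+d}=\alpha_{i+k}$ for all $k\geq1$. Thus $u=\alpha_{i+1}\cdots\alpha_{i+d}$ is an oriented cycle (its source $a_{i+d}$ and target $a_i$ agree by the same periodicity), and the tail $\alpha_{i+1}\alpha_{i+2}\cdots$ equals $uuu\cdots$, making $p$ rational, a contradiction. Hence the lengths agree and then $q=q'$ by an arrow-by-arrow comparison; the increasing union of the injected images then has cardinality $\sup_i\abs{Q(a_i,a_0)}$.
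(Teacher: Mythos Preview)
Your proof is correct and follows the same strategy as the paper's: inject $Q(a,b)$ into some $Q(a_i,a_0)$ by concatenating with paths that witness $a,b\in\Omega$, and biject $Q(a_i,a_0)$ with the subset of $[p]_{a_0}$ consisting of paths with tail $\alpha_{i+1}\alpha_{i+2}\cdots$, using irrationality for injectivity (the paper cites Lemma~\ref{lem:cycle} here, which you essentially reprove inline). One small remark: your left-concatenation injection $Q(a_i,a_j)\hookrightarrow Q(a_i,a_0)$ already works for all $i,j\geq 0$, so the case split on $i<j$ and the appeal to interval finiteness/absence of cycles are unnecessary---this is how the paper collapses the three right-hand suprema into a single chain of inequalities.
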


\begin{proof}
  We observe by the definition of the convex hull that for any vertices $a$ and $b$ in $\Omega$, there exist some finite paths $u \in Q(b, a_0)$ and $v \in Q(a_i, a)$ for some $i\geq0$. The injection
  \[
    f \colon Q(a,b) \To Q(a_i, a_0), \quad
    q \mapsto u q v,
  \]
  implies that $\abs{Q(a,b)} \leq \abs{Q(a_i, a_0)}$. We then obtain
  \[
    \sup_{i\geq 0} \abs{Q(a_i, a_0)}
    \geq \sup_{a,b\in \Omega} \abs{Q(a,b)}
    \geq \sup_{i,j\geq 0} \abs{Q(a_i, a_j)}
    \geq \sup_{i\geq 0} \abs{Q(a_i, a_0)}.
  \]
  It remains to show $\abs{[p]_{a_0}} = \sup_{i\geq 0} \abs{Q(a_i, a_0)}$.

  For each $i \geq 0$, let $\Delta_i$ be the subset of $[p]_{a_0}$ consisting of right infinite paths of the form $u \alpha_{i+1} \alpha_{i+2} \cdots \alpha_j \cdots$ for some finite path $u$.
  We have that $\Delta_i \subseteq \Delta_{i+1}$ and $[p]_{a_0} = \bigcup_{i \geq 0} \Delta_i$. Consider the surjection
  \[
    g \colon Q(a_i, a_0) \To \Delta_i, \quad
    q \mapsto q \alpha_{i+1} \alpha_{i+2} \cdots \alpha_j \cdots.
  \]
  By the assumption, the right infinite path $\alpha_{i+1} \alpha_{i+2} \cdots \alpha_j \cdots$ can not be cyclic. Then Lemma~\ref{lem:cycle} implies that $g$ is an injection, and hence is a bijection. It follows that
  \[
    \abs{[p]_{a_0}}
    = \sup_{i \geq 0} \abs{\Delta_i}
    = \sup_{i \geq 0} \abs{Q(a_i, a_0)}.
    \qedhere
  \]
\end{proof}

Here, we introduce a notion stronger than interval finite.

\begin{definition}\label{def:uif}
  We call a quiver \emph{uniformly interval finite} if there exists some integer $N$ such that for each pair of vertices $a$ and $b$, the number of finite paths $p$ with $s(p) = a$ and $t(p) = b$ is less than or equal to $N$.
  \qed
\end{definition}

\begin{example}
  \begin{enumerate}
    \item
      The infinite quivers of $A_\infty$, $A_\infty^\infty$ and $D_\infty$ type are uniformly interval finite.
    \item
      The infinite quiver of the form
      \[\begin{tikzcd}
        \circ
        &\circ  \lar[yshift=.3em] \lar[yshift=-.3em]
        &\circ  \lar[yshift=.3em] \lar[yshift=-.3em]
        &\cdots \lar[yshift=.3em] \lar[yshift=-.3em]
        &\circ  \lar[yshift=.3em] \lar[yshift=-.3em]
        &\cdots \lar[yshift=.3em] \lar[yshift=-.3em]
      \end{tikzcd}\]
      is interval finite but not uniformly interval finite.
    \qed
  \end{enumerate}
\end{example}

The following result is a consequence of Lemma~\ref{lem:[p]_a}.

\begin{proposition}\label{prop:uif}
  Let $p = \alpha_1 \alpha_2 \cdots \alpha_i \cdots$ be an irrational right infinite path.
  Set $a_i = t(\alpha_{i+1})$ for each $i \geq 0 $.
  The following statements are equivalent.
  \begin{enumerate}
    \item \label{item:prop:uif:1}
      The convex hull of $p$ is uniformly interval finite.
    \item \label{item:prop:uif:2}
      $\set{\abs{Q(a_i, a_0)} \,\middle\vert i\geq0}$ is bounded.
    \item \label{item:prop:uif:3}
      $[p]_{a_0}$ is finite.
  \end{enumerate}
\end{proposition}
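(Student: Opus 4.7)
The plan is to deduce all three equivalences directly from Lemma~\ref{lem:[p]_a}, which already supplies the key chain of equalities
\[
  \abs{[p]_{a_0}}
  = \sup_{a,b\in\Omega} \abs{Q(a,b)}
  = \sup_{i\geq 0}\abs{Q(a_i,a_0)}.
\]
So there is essentially no new combinatorial work; the only task is to translate the three statements into the language of this chain and to verify that when the suprema are finite (as cardinals), they are actually bounded natural numbers.

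First I would show \eqref{item:prop:uif:1} $\Leftrightarrow$ \eqref{item:prop:uif:2}. The convex hull $\Omega$ of $p$ is a full convex subquiver of $Q$, so for vertices $a,b \in \Omega$ the set of finite paths in $\Omega$ from $a$ to $b$ coincides with $Q(a,b)$. Hence \eqref{item:prop:uif:1} says exactly that $\sup_{a,b\in\Omega}\abs{Q(a,b)}$ is bounded by some integer $N$. Since all the $a_i$ lie in $\Omega$, this supremum dominates $\sup_{i\geq 0}\abs{Q(a_i,a_0)}$, giving \eqref{item:prop:uif:2}. Conversely, by Lemma~\ref{lem:[p]_a} the two suprema coincide, so if \eqref{item:prop:uif:2} holds with bound $N$, then $\abs{Q(a,b)} \leq N$ for all $a,b \in \Omega$, which is \eqref{item:prop:uif:1}.

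Next I would show \eqref{item:prop:uif:2} $\Leftrightarrow$ \eqref{item:prop:uif:3}. By Lemma~\ref{lem:[p]_a} again, $\abs{[p]_{a_0}} = \sup_{i \geq 0}\abs{Q(a_i,a_0)}$. Since $Q$ is interval finite, every $\abs{Q(a_i,a_0)}$ is already a nonnegative integer, so this supremum is a finite cardinal if and only if the sequence $\set{\abs{Q(a_i,a_0)} \mid i\geq 0}$ is bounded (a supremum of finitely many terms suffices, but more to the point, an unbounded sequence of natural numbers has supremum $\aleph_0$, while a bounded one has finite supremum). This is precisely the equivalence between \eqref{item:prop:uif:2} and \eqref{item:prop:uif:3}.

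There is no real obstacle here; the content of the proposition is packaged into Lemma~\ref{lem:[p]_a}, so the only point requiring mild care is the cardinal/integer translation in the second equivalence and the observation that paths inside the convex hull $\Omega$ agree with paths of $Q$ between those vertices, which follows from convexity.
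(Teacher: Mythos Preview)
Your proposal is correct and follows essentially the same route as the paper: both deduce all three equivalences directly from the chain of equalities in Lemma~\ref{lem:[p]_a}, after noting that statement~\eqref{item:prop:uif:1} is precisely the boundedness of $\sup_{a,b\in\Omega}\abs{Q(a,b)}$. One small remark: Section~\ref{sec:quiver} does not yet assume $Q$ is interval finite, so your appeal to that hypothesis in the \eqref{item:prop:uif:2}$\Leftrightarrow$\eqref{item:prop:uif:3} step is unwarranted---but harmless, since if some $\abs{Q(a_i,a_0)}$ were infinite then both \eqref{item:prop:uif:2} and \eqref{item:prop:uif:3} fail immediately and the equivalence is trivial in that case.
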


\begin{proof}
  Let $\Omega$ be the convex hull of $p$. We observe that $\Omega$ being uniformly interval finite precisely means $\set{\abs{Q(a,b)} \,\middle\vert a,b \in \Omega}$ being bounded.
  Then it follows from Lemma~\ref{lem:[p]_a} that (1), (2) and (3) are equivalent.
\end{proof}

The following lemma gives some necessary conditions for the convex hull of a right infinite path being uniformly interval finite.

\begin{lemma}\label{lem:uif}
  Let $p = \alpha_1 \alpha_2 \cdots \alpha_i \cdots$ be a right infinite path, whose convex hull is uniformly interval finite. Set $a_i = t(\alpha_{i+1})$ for each $i \geq 0$. Then there exists some nonnegative integer $N$ such that $\abs{Q(a_i, a_j)} = 1$ and $\abs{[p]_{a_j}} = 1$ for any $i\geq j\geq N$.
\end{lemma}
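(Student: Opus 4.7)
The plan is to study the finite cardinals $M_j = \abs{[p]_{a_j}}$ as $j$ varies and show that they are eventually equal to $1$. Once this is established, the injection $Q(a_i, a_j) \hookrightarrow [p]_{a_j}$ sending $q \mapsto q \alpha_{i+1} \alpha_{i+2} \cdots$ constructed in the proof of Lemma~\ref{lem:[p]_a}, together with the existence of the canonical path $\alpha_{j+1} \cdots \alpha_i$ in $Q(a_i, a_j)$, immediately forces $\abs{Q(a_i, a_j)} = 1$ as well.

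First I would observe that $p$ is necessarily irrational: if the convex hull contained an oriented cycle $u$, then $Q(s(u), s(u))$ would contain all powers $u^n$, violating the uniform bound in Definition~\ref{def:uif}. Hence Proposition~\ref{prop:uif} applies to every shifted tail $\alpha_{j+1}\alpha_{j+2}\cdots$, and each $M_j$ is finite.

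Next I would introduce the natural map
\[
  \phi_j \colon [p]_{a_{j+1}} \To [p]_{a_j}, \quad
  r \mapsto \alpha_{j+1} r.
\]
This is well-defined since $s(\alpha_{j+1}) = a_{j+1}$ agrees with the target of the first arrow of $r$, and prepending one arrow clearly preserves the equivalence class; it is injective (erase the first arrow). Consequently $(M_j)_{j \geq 0}$ is a nonincreasing sequence of positive integers, hence eventually constant, so I can pick $N$ with $M_j = M$ for all $j \geq N$.

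The decisive step, and the one I expect to be the main obstacle, is to show that this stable value $M$ equals $1$. For $j \geq N$ the injection $\phi_j$ is now a bijection of finite sets of the same size $M$, so every $r \in [p]_{a_j}$ factors uniquely as $\alpha_{j+1} r'$ with $r' \in [p]_{a_{j+1}}$. Iterating this observation at $j, j+1, j+2, \ldots$ pins down the first $k$ arrows of $r$ to be $\alpha_{j+1}, \alpha_{j+2}, \ldots, \alpha_{j+k}$ for every $k \geq 1$, so $r = \alpha_{j+1} \alpha_{j+2} \cdots$. Thus $[p]_{a_j}$ contains only the tail of $p$, forcing $M = 1$, and the reduction of the first paragraph completes the argument.
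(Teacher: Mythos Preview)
Your argument is correct, but it inverts the order of the paper's proof. The paper first stabilizes the sequence $\abs{Q(a_i,a_0)}$ (bounded by uniform interval finiteness, monotone via the multiplicative inequality $\abs{Q(a_j,a_0)}\cdot\abs{Q(a_i,a_j)}\le\abs{Q(a_i,a_0)}$), extracts $\abs{Q(a_i,a_j)}=1$ from the stabilization, and only then invokes Lemma~\ref{lem:[p]_a} to conclude $\abs{[p]_{a_j}}=1$. You instead stabilize the sequence $\abs{[p]_{a_j}}$ directly and use the surjectivity of the prepend-map $\phi_j$ to force the stable value to be $1$ by an arrow-peeling argument, deducing $\abs{Q(a_i,a_j)}=1$ afterwards via the injection into $[p]_{a_j}$. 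Your route avoids the multiplicative path-count inequality entirely and is arguably more transparent about \emph{why} the stable value must be $1$; the paper's route stays closer to pure path combinatorics and makes the connection to Proposition~\ref{prop:uif}(2) more explicit. One small simplification: you do not need to apply Proposition~\ref{prop:uif} to every shifted tail separately, since your own injections $\phi_j$ already give $M_j\le M_0$, and $M_0$ is finite by a single application of Proposition~\ref{prop:uif} to $p$ itself.
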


\begin{proof}
  Since the convex hull of $p$ is uniformly interval finite, it contains no oriented cycles. In particular, $p$ is irrational.
  By the equivalence between Proposition~\ref{prop:uif}(1) and Proposition~\ref{prop:uif}(2), we have that $\set{\abs{Q(a_i, a_0)} \,\middle\vert i\geq0}$ is bounded.
  For any $i \geq j \geq l \geq0$, we have the injection
  \[
    f \colon Q(a_j, a_l) \times Q(a_i, a_j) \To Q(a_i, a_l), \quad
    (u, v) \mapsto u v.
  \]
  We obtain $\abs{Q(a_i, a_0)} \geq \abs{Q(a_j, a_0)} \times \abs{Q(a_i, a_j)} \geq \abs{Q(a_j, a_0)}$.
  Then there exists some nonnegative integer $N$ such that $\abs{Q(a_i, a_0)} = \abs{Q(a_N, a_0)}$ for any $i\geq N$.
  Since $\abs{Q(a_N, a_0)} \times \abs{Q(a_i, a_N)} \leq \abs{Q(a_i, a_0)}$, we have that $\abs{Q(a_i, a_N)} = 1$.
  Since $\abs{Q(a_j, a_N)} \times \abs{Q(a_i, a_j)} \leq \abs{Q(a_i, a_N)}$ for any $i \geq j \geq N$, we have that $\abs{Q(a_i, a_j)} = 1$.
  By Lemma~\ref{lem:[p]_a}, we obtain $\abs{[p]_{a_j}}=1$.
\end{proof}

We mention that the necessary conditions in the above lemma is not sufficient; see the following example.

\begin{example}
  \begin{enumerate}
    \item
      Let $Q$ be the following quiver
      \[\begin{tikzcd}[sep = 2.5em, /tikz/column 6/.style = {anchor = base west}]
        &\overset{b_1}{\circ}\ar[ld, bend right, "\beta_1"']
        &\overset{b_2}{\circ}\lar["\beta_2"']
        &\cdots              \lar["\beta_3"']
        &\overset{b_i}{\circ}\lar["\beta_i"']
        &\cdots              \lar["\beta_{i+1}"']
        \\
        \underset{a_0}{\circ}
        &\underset{a_1}{\circ}\lar["\alpha_1"] \uar["\gamma_1"']
        &\underset{a_2}{\circ}\lar["\alpha_2"] \uar["\gamma_2"']
        &\cdots               \lar["\alpha_3"]
        &\underset{a_i}{\circ}\lar["\alpha_i"] \uar["\gamma_i"']
        &\cdots.              \lar["\alpha_{i+1}"]
      \end{tikzcd}\]

      Let $\Omega$ be the set of equivalence classes of right infinite paths, and let $\Delta$ be the set of right infinite paths $p$ with $t(p) = a_0$.

      Set $u = \alpha_1 \alpha_2 \cdots \alpha_i \cdots$ and $v = \beta_1 \beta_2 \cdots \beta_i \cdots$.
      For each $i \geq1$, we set
      \[
        w_i =
        \beta_1 \beta_2 \cdots \beta_i \gamma_i
        \alpha_{i+1} \alpha_{i+2} \cdots \alpha_j \cdots.
      \]
      Then we have that
      \[
        \Delta = \set{u, v} \cup \set{w_i \middle\vert i \geq1}.
      \]

      We observe that each class in $\Omega$ contains some right infinite path in $\Delta$.
      Since $[u]_{a_0} = \set{u} \cup \set{w_i \middle\vert i \geq1}$ and $[v]_{a_0} = \set{v}$,
      we have that
      \[
        \Omega = \set{[u], [v]}.
      \]

      We observe that $\abs{Q(b_i, a_0)} = 1$ for any $i \geq0$.
      Then Proposition~\ref{prop:uif} implies that the convex hull of $v$ is uniformly interval finite.
      Moreover, the convex hull of any right infinite path in $[v]$ is uniformly interval finite.

      For each $i \geq1$, we have that $\abs{Q(a_i, a_0)} = i+1$.
      Then Proposition~\ref{prop:uif} implies that the convex hull of $u$ is not uniformly interval finite. Similarly, neither is the one of any $w_l$. But $\abs{Q(a_i, a_j)} = 1$ and $\abs{[p]_{a_j}} = 1$ for any $i\geq j\geq1$. Then the convex hull of each $\alpha_j \alpha_{j+1} \cdots \alpha_i \cdots$ is uniformly interval finite by Proposition~\ref{prop:uif}.
    \item
      Let $Q$ be the following quiver
      \[\begin{tikzcd}[sep = 2.5em]
        \overset{b_0}{\circ}\dar["\gamma_0"]
        &\overset{b_1}{\circ}\lar["\beta_1"']
        &\overset{b_2}{\circ}\lar["\beta_2"'] \dar["\gamma_2"]
        &\overset{b_3}{\circ}\lar["\beta_3"']
        &\overset{b_{2i}}{\circ}\lar[phantom, "\cdots"] \dar["\gamma_{2i}"]
        &\overset{b_{2i+1}}{\circ}\lar["\beta_{2i+1}"']
        &[-1.5em]\lar[phantom, "\cdots"]
        \\
        \underset{a_0}{\circ}
        &\underset{a_1}{\circ}\lar["\alpha_1"] \uar["\gamma_1"']
        &\underset{a_2}{\circ}\lar["\alpha_2"]
        &\underset{a_3}{\circ}\lar["\alpha_3"] \uar["\gamma_3"']
        &\underset{a_{2i}}{\circ}\lar[phantom, "\cdots"]
        &\underset{a_{2i+1}}{\circ}\lar["\alpha_{2i+1}"] \uar["\gamma_{2i+1}"']
        &.\lar[phantom, "\cdots"]
      \end{tikzcd}\]

      Let $\Delta$ be the set of right infinite paths $p$ with $t(p) = a_0$. Then $\Delta$ is uncountable. Let $\Omega$ be the set of equivalence classes of right infinite paths.
      We observe that each class in $\Omega$ contains at least one right infinite path in $\Delta$, and at most countably many right infinite paths in $\Delta$. Therefore $\Omega$ is uncountable.

      For any positive integer $N$, we have that $\abs{Q(a_{N+2}, a_N)} = 2$. Then Lemma~\ref{lem:uif} implies that the convex hull of $\alpha_1 \alpha_2 \cdots \alpha_i \cdots$ is not uniformly interval finite.

      Moreover, one can show that the convex hull of any right infinite path $p$ in $\Delta$ is not uniformly interval finite.
      Indeed, we observe that either $a_i$ or $b_i$ will appear in $p$ for any $i \geq 0$. For any positive integer $N$, we have that $\abs{Q(c, c')} \geq 2$ for any $c \in \set{a_{N+3}, b_{N+3}}$ and $c' \in \set{a_N, b_N}$. Then Lemma~\ref{lem:uif} implies that the convex hull of $p$ is not uniformly interval finite.
    \qed
  \end{enumerate}
\end{example}

\section{Representations of quivers}
\label{sec:rep}

Let $k$ be a field and let $Q$ be a quiver.
Denote by $\Mod k$ the category of $k$-linear spaces.

A representation $M = (M(a), M(\alpha))$ of $Q$ over $k$ is given by $k$-linear spaces $M(a)$ for any vertex $a$, and $k$-linear maps $M(\alpha) \colon M(a) \to M(b)$ for any arrow $\alpha \colon a \to b$.
For each finite path $p = \alpha_l \cdots \alpha_2 \alpha_1$ of length $l \geq 1$, we set
$M(p) = M(\alpha_l) \circ \cdots \circ M(\alpha_2) \circ M(\alpha_1)$;
for each trivial path $e_a$, we set $M(e_a) = \1_{M(a)}$.
Given two representations $M$ and $N$, a morphism $f \colon M \to N$ is given by $k$-linear maps $f(a) \colon M(a) \to N(a)$ for any vertex $a$, such that $f(b) \circ M(\alpha) = N(\alpha) \circ f(a)$ for any arrow $\alpha \colon a \to b$.

Let $M$ be a representation.
Recall that the \emph{support} $\supp M$ of $M$ is the full subquiver of $Q$ generated by vertices $a$ with $M(a)\neq0$.
The \emph{socle} $\soc M$ of $M$ is the subrepresentation of $M$ such that
\[
  (\soc M)(a) = \bigcap_{\alpha\in Q_1, s(\alpha)=a} \Ker M(\alpha)
\]
for each vertex $a$.
The \emph{radical} $\rad M$ of $M$ is the subrepresentation of $M$ such that
\[
  (\rad M)(a) = \sum_{\alpha\in Q_1, t(\alpha)=a} \Im M(\alpha)
\]
for each vertex $a$.
The \emph{top} $\top M$ of $M$ is the factor representation $M/\rad M$.

We mention the following observation;
compare \cite[Lemma~1.1]{BautistaLiuPaquette2013Representation}.

\begin{lemma}\label{lem:ess}
  Let $M$ be a representation.
  \begin{enumerate}
    \item \label{item:lem:ess:1}
      If $\supp M$ contains no right infinite paths, then $\rad M$ is superfluous in $M$.
    \item \label{item:lem:ess:2}
      If $\supp M$ contains no left infinite paths, then $\soc M$ is essential in $M$.
  \end{enumerate}
\end{lemma}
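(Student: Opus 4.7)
The plan is to prove both parts by contradiction, extracting an infinite path of the forbidden type in $\supp M$ from the failure of the conclusion.

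For part~(\ref{item:lem:ess:1}), suppose there is a subrepresentation $N \subsetneq M$ with $N + \rad M = M$. I would pick a vertex $a_0$ where $N(a_0) \neq M(a_0)$ and construct a right infinite path in $\supp M$ ending at $a_0$ by induction on $i \geq 0$. At the inductive step, the equality $N(a_i) + (\rad M)(a_i) = M(a_i)$ combined with $N(a_i) \neq M(a_i)$ forces $(\rad M)(a_i) \not\subseteq N(a_i)$. Since $(\rad M)(a_i)$ is by definition the sum of the images $\Im M(\alpha)$ over arrows $\alpha$ ending at $a_i$, some arrow $\alpha_{i+1} \colon a_{i+1} \to a_i$ satisfies $\Im M(\alpha_{i+1}) \not\subseteq N(a_i)$. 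Because $N$ is a subrepresentation, the equality $N(a_{i+1}) = M(a_{i+1})$ would force $\Im M(\alpha_{i+1}) \subseteq N(a_i)$; hence $N(a_{i+1}) \neq M(a_{i+1})$, and in particular $M(a_{i+1}) \neq 0$, so $a_{i+1} \in \supp M$. Iterating yields a right infinite path $\alpha_1 \alpha_2 \cdots$ in $\supp M$, contradicting the hypothesis.

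Part~(\ref{item:lem:ess:2}) is proved dually. Given a nonzero subrepresentation $N$ of $M$, I would pick any nonzero element $x_0 \in N(a_0)$. If $x_0 \in (\soc M)(a_0)$, then $x_0 \in (N \cap \soc M)(a_0)$ and the conclusion holds. Otherwise, the definition of the socle produces an arrow $\alpha_1 \colon a_0 \to a_1$ with $x_1 = M(\alpha_1)(x_0) \neq 0$, and $x_1 \in N(a_1)$ since $N$ is a subrepresentation. Repeating the procedure with $x_1$ either terminates after finitely many steps with some $x_i \in N(a_i) \cap (\soc M)(a_i)$, giving $N \cap \soc M \neq 0$, or produces a left infinite path $\cdots \alpha_2 \alpha_1$ in $\supp M$, again contradicting the hypothesis.

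No substantive obstacle is expected: both arguments are elementary once one matches the right infinite (resp.\ left infinite) direction with the definition of $\rad$ (resp.\ $\soc$). The only care required is to verify that the inductive construction can always be continued and that each new vertex lies in $\supp M$; both follow immediately from the fact that the witnesses produced at each step are nonzero.
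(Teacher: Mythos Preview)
Your proposal is correct and follows essentially the same approach as the paper's proof: both parts argue by contradiction, inductively constructing the forbidden infinite path from a vertex where the desired conclusion fails, using the definitions of $\rad$ and $\soc$ exactly as you indicate. The paper's phrasing for part~(\ref{item:lem:ess:2}) is slightly more compact---it directly asserts the existence of a finite path $p$ with $N(p)(x) \neq 0$ and $N(\alpha p)(x) = 0$ for all arrows $\alpha$, invoking the absence of left infinite paths---but this is the same argument as your terminate-or-continue dichotomy.
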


\begin{proof}
  (1) Let $N$ be a subrepresentation of $M$ with $N + \rad M = M$. We assume $N(a) \neq M(a)$ for some vertex $a$.

  We claim that there exists some $a_1 \in a^-$ such that $N(a_1) \neq M(a_1)$. Indeed, otherwise $(\rad M)(a) \subseteq N(a)$ and then $(\rad M + N)(a) = N(a) \neq M(a)$, which is a contradiction.

  Set $a_0 = a$. For every $i \geq 0$, we can find some $a_{i+1} \in a_i^-$ inductively such that $N(a_{i+1}) \neq M(a_{i+1})$. Then we obtain some right infinite path in $\supp M$, which is a contradiction.
  It follows that $N = M$, and then $\rad M$ is superfluous in $M$.

  (2) Let $N$ be a nonzero subrepresentation of $M$. Let $a$ be a vertex in $\supp N$. Assume $x$ is a nonzero element in $N(a)$. Since $\supp M$ contains no left infinite paths, there exists some finite path $p$ in $\supp M$ with $s(p)=a$ such that $N(p)(x) \neq 0$ and $N(\alpha p)(x) = 0$ for any arrow $\alpha$ in $Q$. Therefore $N(p)(x)$ lies in $N \cap \soc M$. It follows that $\soc M$ is essential in $M$.
\end{proof}

Denote by $\Rep(Q)$ the category of representations of $Q$ over $k$.
For every pair of representations $M$ and $N$, denote by $\Hom (M, N)$ the set of morphisms from $M$ to $N$ in $\Rep(Q)$.
It is well known that $\Rep(Q)$ is a hereditary abelian category;
see \cite[Section~8.2]{GabrielRoiter1992Representations}.

We associate each representation $M$ of $Q$ with a representation $D M$ of $Q^\op$ as follows.
For each vertex $a$ in $Q^\op$, we let
\[
  (D M) (a) = \Hom_k(M(a), k).
\]
For each arrow $\alpha^\op \colon b \to a$ in $Q^\op$, we let
\[
  (D M) (\alpha^\op) = \Hom_k(M(\alpha), k) \colon (D M) (b) \To (D M) (a).
\]
Given a morphism $f \colon M \to N$ in $\Rep(Q)$, we set the morphism $D f \colon D N \to D M$ in $\Rep(Q^\op)$ such that
\[
  (D f) (a) = \Hom_k(f(a), k) \colon (D N) (a) \To (D M) (a).
\]
Then we obtain an exact contravariant functor
\[
  D \colon \Rep(Q) \To \Rep(Q^\op).
\]

Let $a$ be a vertex in $Q$. We define the representation $P_a$ as follows.
For each vertex $b$, we let
\[
  P_a(b) = \bigoplus_{p \in Q(a,b)} k p.
\]
We mention that $P_a(b) = 0$ if $Q(a,b)=\emptyset$.
For each arrow $\alpha \colon b \to b'$, we let
\[
  P_a (\alpha) \colon P_a(b) \To P_a(b'), \quad
  p \mapsto \alpha p,
\]
for any finite path $p \in Q(a,b)$.

Denote by $P_a^\op$ the corresponding representation of $Q^\op$ and let $I_a = D P_a^\op$ in $\Rep(Q)$.
We mention that for each vertex $b$, we have that
\[
  I_a (b) = \Hom_k \biggl( \bigoplus_{p \in Q(b,a)} k p, k \biggr).
\]
For each arrow $\alpha \colon b \to b'$, we have that
\[
  I_a(\alpha) \colon I_a(b) \To I_a(b'), \quad
  f \mapsto (p \mapsto f(p \alpha)),
\]
for any $f \in I_a (b)$ and any finite path $p \in Q(b',a)$.

Let $S_a$ be the simple representation such that $S_a (a) = k e_a$ and $S_a (b) = 0$ for any vertex $b \neq a$. We observe that $\top P_a \simeq S_a \simeq \soc I_a$.

The following result seems well known;
see \cite[Section~3.7]{GabrielRoiter1992Representations}.
It implies that $P_a$ is a projective representation and $I_a$ is an injective representation in $\Rep(Q)$.

\begin{lemma}\label{lem:proj-inj}
  Let $M \in \Rep(Q)$ and $a \in Q_0$.
  \begin{enumerate}
    \item \label{item:lem:proj-inj:1}
      The $k$-linear map
      \[
        \eta_M \colon \Hom(P_a, M) \To M(a),
      \]
      given by $\eta_M(f) = f(a) (e_a)$ for any $f \in \Hom (P_a, M)$, is an isomorphism natural in $M$.
    \item \label{item:lem:proj-inj:2}
      The $k$-linear map
      \[
        \zeta_M \colon \Hom(M, I_a) \To \Hom_k (M(a), k),
      \]
      given by $\zeta_M(f)(x) = f(a) (x) (e_a)$ for any $f \in \Hom ( M, I_a )$ and $x \in M(a)$, is an isomorphism natural in $M$.
  \end{enumerate}
\end{lemma}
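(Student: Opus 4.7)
The plan is to prove both parts by a direct computation that exploits the explicit combinatorial structure of $P_a$ and $I_a$: namely, that $P_a(b)$ has the set of paths $Q(a,b)$ as a $k$-basis, and $I_a(b)$ is its $k$-linear dual. Part (1) is the usual Yoneda-type argument for representable projectives, and part (2) dualizes it.

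For (1), I first verify injectivity of $\eta_M$. The key identity, immediate from the definition of $P_a$ on arrows, is that $p = P_a(p)(e_a)$ in $P_a(b)$ for every path $p \in Q(a,b)$. Hence for any morphism $f \colon P_a \to M$ and any $p \in Q(a,b)$,
\[
  f(b)(p) = f(b)(P_a(p)(e_a)) = M(p)(f(a)(e_a)),
\]
so $f(b)$ is determined on the basis of $P_a(b)$ by the single element $\eta_M(f) = f(a)(e_a)$. For surjectivity, given $x \in M(a)$, define $f(b) \colon P_a(b) \to M(b)$ by $f(b)(p) = M(p)(x)$ on each basis path $p \in Q(a,b)$, extended $k$-linearly. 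The morphism condition $f(b') \circ P_a(\alpha) = M(\alpha) \circ f(b)$ for an arrow $\alpha \colon b \to b'$ reduces on basis elements to $M(\alpha p)(x) = M(\alpha)(M(p)(x))$, which holds by functoriality of $M$. Clearly $\eta_M(f) = x$, and naturality of $\eta_M$ in $M$ is transparent from the formula.

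For (2), a morphism $f \colon M \to I_a$ is the data of compatible linear maps $f(b) \colon M(b) \to I_a(b)$, and the naturality axiom with respect to an arrow $\alpha \colon b \to b'$, using the formula for $I_a(\alpha)$, reads
\[
  f(b)(x)(p\alpha) = f(b')(M(\alpha)(x))(p)
\]
for all $x \in M(b)$ and $p \in Q(b',a)$. Writing a path $q \in Q(b,a)$ as $q = \alpha_l \cdots \alpha_1$ and iterating this relation $l$ times, one obtains
\[
  f(b)(x)(q) = f(a)(M(q)(x))(e_a) = \zeta_M(f)(M(q)(x)),
\]
so that $f$ is determined by $\zeta_M(f)$; this gives injectivity. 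For surjectivity, given $\phi \in \Hom_k(M(a),k)$, define $f(b)(x) \in I_a(b)$ by $f(b)(x)(q) := \phi(M(q)(x))$ on the basis $Q(b,a)$ and extend $k$-linearly; the displayed naturality relation then follows directly from $M(p\alpha) = M(p) \circ M(\alpha)$, so $f$ is a morphism of representations with $\zeta_M(f) = \phi$. Naturality in $M$ is again immediate.

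I expect no real obstacle; this is entirely bookkeeping. The only points requiring care are the canonical identification $p = P_a(p)(e_a)$ that underlies the whole of (1), and the inductive propagation of naturality from single arrows to arbitrary paths in (2). Once the final displayed identity in (2) is observed, both injectivity and surjectivity of $\zeta_M$ fall out simultaneously.
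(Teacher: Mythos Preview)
Your proof is correct and is essentially the same as the paper's: the paper constructs the explicit inverse maps $\eta'_M(x)(b)(p) = M(p)(x)$ and $\zeta'_M(\phi)(b)(x)(q) = \phi(M(q)(x))$ and verifies they are two-sided inverses, which is exactly your surjectivity construction together with the identity you derive for injectivity. The only difference is packaging---inverse map versus separate injectivity/surjectivity---and the underlying computations coincide.
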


\begin{proof}
  (1) Consider the $k$-linear map
  \[
    \eta'_M \colon M(a) \To \Hom(P_a, M),
  \]
  given by $\eta'_M(x) (b) (p) = M(p)(x)$, for any $x \in M(a)$ and any $b \in Q_0$ and $p \in Q(a,b)$.
  By a direct verification, we have that
  \[
    \eta'_M \circ \eta_M = \1_{\Hom(P_a, M)}
    \quad \mbox{and} \quad
    \eta_M \circ \eta'_M = \1_{M(a)}.
  \]
  It follows that $\eta_M$ is an isomorphism. The naturality is a direct verification.

  (2) Consider the $k$-linear map
  \[
    \zeta'_M \colon \Hom_k( M(a), k ) \To \Hom ( M, I_a ),
  \]
  given by $\zeta'_M (f) (b) (x) (p) = f( M(p)(x) )$ for any $f \in \Hom_k( M(a), k )$ and any $b \in Q_0$, $x \in M(b)$ and $p \in Q(b,a)$.
  By a direct verification, we have that
  \[
    \zeta'_M \circ \zeta_M = \1_{\Hom( M, I_a )}
    \quad \mbox{and} \quad
    \zeta_M \circ \zeta'_M = \1_{\Hom_k( M(a), k )}.
  \]
  It follows that $\zeta_M$ is an isomorphism. The naturality is a direct verification.
\end{proof}

%

Let $M$ be a representation.
Then there exists some epimorphism $\bigoplus_{i \in \Lambda} P_{a_i} \to M$ with $a_i \in Q_0$.
We call $M$ \emph{countably generated} if $\Lambda$ can be chosen as a countable set; we call $M$ \emph{finitely generated} if $\Lambda$ can be chosen as a finite set.
Dually, there exists some monomorphism $M \to \prod_{i \in \Lambda} I_{a_i}$ with $a_i \in Q_0$.
We call $M$ \emph{countably cogenerated} if $\Lambda$ can be chosen as a countable set; we call $M$ \emph{finitely cogenerated} if $\Lambda$ can be chosen as a finite set.

Denote by $\proj(Q)$ the category of finitely generated projective representations and by $\inj(Q)$ the category of finitely cogenerated injective representations.

If $Q$ contains no oriented cycles, we observe by Lemma~\ref{lem:proj-inj} that $\End(P_a) \simeq k \simeq \End(I_a)$ for any vertex $a$.
Therefore, both $\proj(Q)$ and $\inj(Q)$ are Krull--Schmidt categories.
Every object in $\proj(Q)$ is of the form $\bigoplus_{i=1}^n P_{a_i}$ with $a_i \in Q_0$, and every object in $\inj(Q)$ is of the form $\bigoplus_{i=1}^n I_{a_i}$ with $a_i \in Q_0$.
Moreover, it follows from Azumaya's decomposition theorem that every projective representation in $\Rep(Q)$ is the direct sum of representations of the form $P_a$;
see \cite[Theorem~12.6]{AndersonFuller1974Rings}.

Recall that a representation $M$ is called \emph{pointwise finite dimensional} if $M(a)$ is finite dimensional for each vertex $a$.
We denote by $\rep(Q)$ the category of pointwise finite dimensional representations.
The restriction of $D$ gives a duality
\[
  D \colon \rep(Q) \To \rep(Q^\op).
\]

The following fact is well known.

\begin{lemma}\label{lem:here}
  $\rep(Q)$ is a hereditary abelian subcategory of $\Rep(Q)$, which is closed under extensions.
\end{lemma}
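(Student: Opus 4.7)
The plan is to verify the three assertions in the order (i) $\rep(Q)$ is closed under extensions in $\Rep(Q)$, (ii) $\rep(Q)$ is an abelian subcategory of $\Rep(Q)$, and (iii) $\rep(Q)$ inherits the hereditary property from $\Rep(Q)$.

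For (i), suppose $0 \to L \to M \to N \to 0$ is a short exact sequence in $\Rep(Q)$ with $L, N \in \rep(Q)$. Evaluating at any vertex $a$ gives a short exact sequence $0 \to L(a) \to M(a) \to N(a) \to 0$ of $k$-linear spaces (the evaluation functor $\Rep(Q) \to \Mod k$ at each vertex is exact, since kernels, cokernels and images in $\Rep(Q)$ are computed pointwise). Hence $\dim_k M(a) = \dim_k L(a) + \dim_k N(a) < \infty$, so $M \in \rep(Q)$.

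For (ii), I would note that the full subcategory $\rep(Q) \subseteq \Rep(Q)$ is clearly closed under finite direct sums. Given a morphism $f \colon M \to N$ in $\rep(Q)$, its kernel, image and cokernel in $\Rep(Q)$ are also in $\rep(Q)$: at each vertex $a$ they are respectively $\Ker f(a)$, $\Im f(a)$ and $\operatorname{Coker} f(a)$, all finite dimensional since $M(a)$ and $N(a)$ are. Thus $\rep(Q)$ admits all kernels and cokernels, which agree with those of $\Rep(Q)$, and is an abelian subcategory.

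For (iii), since the paper has already recorded that $\Rep(Q)$ is hereditary (citing \cite[Section~8.2]{GabrielRoiter1992Representations}), it suffices to compare the $\Ext$-groups. Because $\rep(Q)$ is closed under extensions in $\Rep(Q)$ by (i), every Yoneda $n$-extension in $\Rep(Q)$ whose outer terms lie in $\rep(Q)$ has all middle terms in $\rep(Q)$ too; conversely, an extension in $\rep(Q)$ is an extension in $\Rep(Q)$. Consequently the natural map $\Ext^n_{\rep(Q)}(M,N) \to \Ext^n_{\Rep(Q)}(M,N)$ is a bijection for each $n \geq 1$, and in particular $\Ext^2_{\rep(Q)}(M,N) = \Ext^2_{\Rep(Q)}(M,N) = 0$ for all $M, N \in \rep(Q)$.

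There is no real obstacle here; everything reduces to the pointwise exactness of the evaluation functors together with the hereditariness of $\Rep(Q)$ already cited. The only point that deserves a careful word is why hereditariness descends to the subcategory, which is handled by the extension-closedness via Yoneda $\Ext$ as above.
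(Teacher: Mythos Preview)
Your parts (i) and (ii) are fine and match the paper's tacit claims. The gap is in (iii). You assert that ``every Yoneda $n$-extension in $\Rep(Q)$ whose outer terms lie in $\rep(Q)$ has all middle terms in $\rep(Q)$ too''. This is false for $n\geq 2$: given $M,N\in\rep(Q)$ and any $X\in\Rep(Q)\setminus\rep(Q)$, the sequence $0\to N\to N\oplus X\to M\oplus X\to M\to 0$ is a $2$-extension in $\Rep(Q)$ whose middle terms are not in $\rep(Q)$. Extension-closure only forces the middle term of a \emph{short} exact sequence into $\rep(Q)$, so it only yields the identification $\Ext^1_{\rep(Q)}=\Ext^1_{\Rep(Q)}$, not the higher comparison you claim. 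Even restricting attention to injectivity of $\Ext^2_{\rep(Q)}\to\Ext^2_{\Rep(Q)}$ (which is all you actually need), your argument does not supply it: triviality in $\Rep(Q)$ is witnessed by a zigzag of morphisms of $2$-extensions whose intermediate middle terms need not lie in $\rep(Q)$.

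The paper avoids this by arguing only at the level of $\Ext^1$: since $\Rep(Q)$ is hereditary, $\Ext^1_{\Rep(Q)}(M,-)$ and $\Ext^1_{\Rep(Q)}(-,M)$ are right exact; by the $\Ext^1$-identification (which \emph{does} follow from extension-closure) their restrictions to $\rep(Q)$ are right exact as well, and right exactness of $\Ext^1$ forces $\Ext^2=0$ via the long exact sequence in Yoneda $\Ext$. Concretely, a $2$-extension $0\to N\to E_2\to E_1\to M\to 0$ in $\rep(Q)$ has image $K\in\rep(Q)$, and its class is the image of $[0\to K\to E_1\to M\to 0]$ under the connecting map $\Ext^1(M,K)\to\Ext^2(M,N)$ associated to $0\to N\to E_2\to K\to 0$; surjectivity of $\Ext^1(M,E_2)\to\Ext^1(M,K)$ then kills it. You could repair your argument along these lines, but as written the key step does not stand.
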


\begin{proof}
  One can see that $\rep(Q)$ is an abelian subcategory of $\Rep(Q)$, which is closed under extensions.
  For any $M$ and $N$ in $\rep(Q)$, we will view $\Ext^1(M,N)$ in the sense of Yoneda under the Baer sum.
  Since $\Rep(Q)$ is hereditary, the functors $\Ext^1(M,-)$ and $\Ext^1(-,M)$ from $\Rep(Q)$ to $\Mod k$ are right exact. Then so are their restrictions to $\rep(Q)$, since $\rep(Q)$ is closed under extensions. That is to say, $\rep(Q)$ is hereditary.
\end{proof}

Given infinitely many objects in $\rep(Q)$, it depends on their supports whether they admit a direct sum in $\rep(Q)$.

\begin{proposition}\label{prop:inf-sum}
  Let $M_i$ for $i \in \Lambda$ be infinitely many objects in $\rep(Q)$.
  The following statements are equivalent.
  \begin{enumerate}
    \item
      The product of $M_i$ for $i \in \Lambda$ exists in $\rep(Q)$.
    \item
      The coproduct of $M_i$ for $i \in \Lambda$ exists in $\rep(Q)$.
    \item
      For every vertex $a$, there exist only finitely many $i \in \Lambda$ such that $M_i (a) \neq 0$.
  \end{enumerate}
\end{proposition}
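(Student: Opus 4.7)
The plan is to prove the three implications (3)$\Rightarrow$(1), (3)$\Rightarrow$(2), and (1) or (2) $\Rightarrow$ (3) by evaluating at each vertex and reducing to statements in $\Mod k$. For (3)$\Rightarrow$(1) and (3)$\Rightarrow$(2), I would first recall that in the ambient category $\Rep(Q)$ products and coproducts always exist and are computed pointwise. Under hypothesis (3), at each vertex $a$ only finitely many $M_i(a)$ are nonzero, so the pointwise direct sum and the pointwise direct product coincide in $\Mod k$ and are finite dimensional. Hence the representation $M$ with $M(a) = \bigoplus_{i \in \Lambda} M_i(a)$ lies in $\rep(Q)$, and because $\rep(Q)$ is a full subcategory of $\Rep(Q)$, its universal properties as product and coproduct in $\Rep(Q)$ restrict at once to the corresponding universal properties in $\rep(Q)$.

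For the converses, the key observation is that the interval finiteness of $Q$ forces every $P_a$ and every $I_a$ to be pointwise finite dimensional and therefore to lie in $\rep(Q)$. Assume first that a product $P$ of the $M_i$ exists in $\rep(Q)$. Applying $\Hom(P_a, -)$ to the product diagram and invoking Lemma~\ref{lem:proj-inj}(1), which identifies $\Hom(P_a, -)$ with evaluation at $a$, yields an isomorphism $P(a) \simeq \prod_{i \in \Lambda} M_i(a)$ in $\Mod k$. Since $P(a)$ is finite dimensional, only finitely many factors $M_i(a)$ can be nonzero, giving (3). The case of coproducts is dual: Lemma~\ref{lem:proj-inj}(2) gives $\Hom_k(C(a), k) \simeq \prod_{i \in \Lambda} \Hom_k(M_i(a), k)$ for any coproduct $C \in \rep(Q)$, and finite dimensionality of $C(a)$ again forces the required finiteness.

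There is no substantial obstacle; the only care needed is to note that the natural test objects $P_a$ and $I_a$ themselves lie in $\rep(Q)$, which is precisely where the standing interval finiteness assumption on $Q$ enters. Once that is observed, Lemma~\ref{lem:proj-inj} supplies representable functors already available inside $\rep(Q)$, converting each categorical product or coproduct identity into a vector-space identity whose finite dimensionality immediately yields condition (3).
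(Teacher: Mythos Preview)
Your argument is correct, and the direction $(3)\Rightarrow(1),(2)$ coincides with the paper's. For the converses, however, you take a genuinely different route: you test the putative product (resp.\ coproduct) against the representable objects $P_a$ (resp.\ $I_a$) and invoke Lemma~\ref{lem:proj-inj} to read off $P(a)\simeq\prod_i M_i(a)$ (resp.\ its dual), whereas the paper argues directly by picking finitely many indices $i_1,\dots,i_n$ with $n>\dim M(a)$, building the section $g\colon\bigoplus_j M_{i_j}\to M$ from the universal property, and deriving a dimension contradiction. Your approach is slicker and conceptually cleaner, but it genuinely needs $P_a,I_a\in\rep(Q)$, i.e.\ interval finiteness of $Q$; note that Section~\ref{sec:rep} of the paper only assumes $Q$ is an arbitrary quiver, and the paper's section-counting argument works in that generality. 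So what you gain in elegance you pay for with a hypothesis that the paper's proof does not need---though of course in the paper's main setting both arguments are equally valid.
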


\begin{proof}
  (1) $\Rightarrow$ (3).
  Let $(M, f_i \colon M \to M_i)$ be the product.
  Assume there exist infinitely many $i \in \Lambda$ such that $M_i (a) \neq 0$, for some vertex $a$. From these elements, we can choose some $i_1, i_2, \dots, i_n$ such that $n > \dim M(a)$.

  Denote by $g_i \colon M_i \to M$ the induced morphism such that $f_i \circ g_i = \1_{M_i}$ and $f_j \circ g_i = 0$ for any $j \neq i$.
  Consider the morphisms $g \colon \bigoplus_{j=1}^n M_{i_j} \to M$ induced by $g_i$, and $f \colon M \to \bigoplus_{j=1}^n M_{i_j}$ induced by $f_i$.
  One can check that $f \circ g = \1_{\bigoplus_{j=1}^n M_{i_j}}$.
  In particular, $\dim M(a) \geq \dim \bigoplus_{j=1}^n M_{i_j} (a) \geq n$, which is a contradiction.

  (3) $\Rightarrow$ (1).
  By the assumption, $\prod_{i \in \Lambda} M_i(a)$ is a finite direct product for any vertex $a$. Then the direct product $\prod_{i \in \Lambda} M_i$ in $\Rep(Q)$ lies in $\rep(Q)$. It is the product of $M_i$ for $i \in \Lambda$ in $\rep(Q)$.

  The proof of (2) $\Leftrightarrow$ (3) is similar.
\end{proof}

If the equivalent conditions in the preceding proposition hold, then the product and coproduct are just the direct sum in $\Rep(Q)$, which we denote by $\bigoplus_{i \in \Lambda} M_i$.
Moreover, $\bigoplus_{i \in \Lambda} M_i(a)$ is a finite direct sum for every vertex $a$.

\begin{example}
  Let $Q$ be the following quiver
  \[\begin{tikzcd}[/tikz/column 6/.style = {anchor = base west}]
    \overset{1}{\circ} \dar
    &\overset{3}{\circ} \lar \dar
    &\overset{5}{\circ} \lar \dar
    &\cdots \lar
    &\overset{2n+1}{\circ} \lar \dar
    &\cdots \lar
    \\
    \underset{0}{\circ}
    &\underset{2}{\circ}
    &\underset{4}{\circ}
    &\cdots
    &\underset{2n}{\circ}
    &\cdots.
  \end{tikzcd}\]

  We observe that $P_{2n} \simeq S_{2n}$ for $n \geq 0$, and they admit a direct sum $\bigoplus_{n\geq0} P_{2n}$ in $\rep(Q)$.
  Since $P_{2n+1}(1) \neq 0$ for any $n \geq 0$, then $P_{2n+1}$ for $n \geq 0$ do not admit a direct sum in $\rep(Q)$ by Proposition~\ref{prop:inf-sum}. Indeed, $\bigoplus_{n\geq0} P_{2n+1}$ does not lie in $\rep(Q)$.
%
  \qed
\end{example}

The following fact may be known to experts.
One can see \cite[Theorem~1]{JiaoDecompositions} for details;
compare \cite[Main Theorem(a)]{AngeleriHugeldelaPena2009Locally}.

\begin{lemma}\label{lem:dec}
  Every $M \in \rep(Q)$ admits a decomposition $M = \bigoplus_{i \in \Lambda} M_i$ such that each $M_i$ is indecomposable and $\End(M_i)$ is local.
  \qed
\end{lemma}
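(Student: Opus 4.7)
The plan is to realise $E := \End(M)$ as a complete linearly topologised ring with Artinian discrete quotients, and to read off a decomposition of $M$ from a summable orthogonal family of primitive idempotents in $E$ with local corners.

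First I would topologise $E$. For each finite $S \subseteq Q_0$, set $I_S := \set{f \in E \middle\vert f(a)=0 \text{ for all } a \in S}$. These form a filtered family of two-sided ideals, and the evaluation map $E/I_S \hookrightarrow \prod_{a \in S} \End_k(M(a))$ shows that every $E/I_S$ is a finite-dimensional $k$-algebra, hence Artinian and semiperfect. Since a morphism in $\rep(Q)$ is determined by its values at each vertex, $E \cong \varprojlim_S E/I_S$, so $E$ is complete and Hausdorff in the $I_S$-topology.

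Second, I would build a coherent system of primitive idempotent decompositions. In each Artinian $E/I_S$ the identity splits as a finite sum of pairwise orthogonal primitive idempotents with local corners. Using completeness of $E$ and the classical lifting of orthogonal idempotents through the surjections $E/I_T \to E/I_S$ (for $S \subseteq T$) between semiperfect rings, a Zorn's-lemma argument produces a maximal pairwise orthogonal family $\set{e_i}_{i\in\Lambda} \subseteq E$ of primitive idempotents with each $e_i E e_i$ local, summing to $1_E$ in the $I_S$-topology.

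Setting $M_i := e_i M$ then completes the proof. For any vertex $a$, the elements $e_i(a) \in \End_k(M(a))$ are pairwise orthogonal idempotents of a finite-dimensional space, so only finitely many of them are nonzero. By Proposition~\ref{prop:inf-sum} the coproduct $\bigoplus_i M_i$ therefore exists in $\rep(Q)$, and the canonical morphism $\bigoplus_i M_i \to M$ is a vertex-wise isomorphism, hence an isomorphism. Each $M_i$ is indecomposable with $\End(M_i) \cong e_i E e_i$ local, as required. The hard part will be the second step: genuinely assembling the idempotents $e_i$ inside $E$ rather than merely inside each quotient. This is the essence of the pseudocompact-ring theory; the pointwise finite dimensionality of $M$ ensures the inverse system $\set{E/I_S}$ is tame enough (Artinian quotients, radical-nilpotent transition kernels) for the simultaneous lifting and diagonalisation across all finite $S$ to succeed.
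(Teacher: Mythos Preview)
The paper offers no proof here; it simply cites \cite[Theorem~1]{JiaoDecompositions} and points to \cite[Main~Theorem(a)]{AngeleriHugeldelaPena2009Locally}, so there is nothing in the paper itself to compare your argument against beyond those references.

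Your route is the pseudocompact-ring approach going back to Gabriel: with the $I_S$-topology, $E=\End(M)$ is a pseudocompact $k$-algebra (complete Hausdorff linear topology with a basis of open two-sided ideals having Artinian quotients), and that theory supplies exactly the idempotent structure you need. The outline is sound, and you have correctly isolated the nontrivial content in step two. Two points deserve explicit care when you fill it in. First, ``primitive in $E$'' must be upgraded to ``$e_iEe_i$ local''; this is where pseudocompactness is genuinely used, since $e_iEe_i$ is again pseudocompact and a pseudocompact ring without nontrivial idempotents is local (its radical is closed with semisimple quotient). Second, that a \emph{maximal} orthogonal family of primitive idempotents sums to $1$ in the topology needs an argument: if the topological remainder $1-\sum e_i$ were nonzero it would have nonzero image in some $E/I_S$, hence dominate a primitive idempotent there, and lifting through the Artinian tower would contradict maximality. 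Once those two points are secured, your final paragraph --- invoking Proposition~\ref{prop:inf-sum} to see that $\bigoplus_i e_iM$ exists in $\rep(Q)$ and agrees with $M$ vertex by vertex --- is correct.

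The cited reference \cite{JiaoDecompositions} works more directly at the level of the object rather than its endomorphism ring, in the generality of pointwise finite-length functors on an essentially small category; your approach trades that hands-on construction for a one-time investment in topological ring theory, after which the decomposition (and its Krull--Schmidt uniqueness) fall out formally.
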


\section{Direct limits and inverse limits}
\label{sec:limit}

In this section, we mention some facts about direct limits and inverse limits.

Let $(M_i, \psi_{ij}\colon M_i \to M_j)$ be a direct system over a directed set $(\Lambda, \leq)$ in $\Mod k$.
For each $i \in \Lambda$, we set
\[
  M'_i = \sum_{i \leq j} \Ker \psi_{ij}.
\]

We mention the following observations.

\begin{lemma}\label{lem:M'i}
  For any $i \in \Lambda$ and $x \in M'_i$, there exists some $l \geq i$ such that $x \in \Ker \psi_{il}$.
\end{lemma}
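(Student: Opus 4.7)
The plan is to unwind the definition of $M'_i$ as a sum of kernels and use the directedness of $\Lambda$ to merge finitely many bounds into a single one.

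First, since $M'_i = \sum_{i\leq j}\Ker\psi_{ij}$ is a sum of subspaces, any $x \in M'_i$ is a \emph{finite} sum $x = x_1 + x_2 + \cdots + x_n$ with each $x_k \in \Ker\psi_{i j_k}$ for some $j_k \geq i$. This is the only content of ``belongs to a sum of subspaces.''

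Next, I would invoke that $(\Lambda,\leq)$ is directed: finitely many indices $j_1,\dots,j_n$ (and $i$) admit a common upper bound $l \in \Lambda$ with $l \geq j_k$ for all $k$ and $l \geq i$. Using the compatibility of the direct system $\psi_{il} = \psi_{j_k l} \circ \psi_{ij_k}$, each summand satisfies
\[
  \psi_{il}(x_k) = \psi_{j_k l}\bigl(\psi_{ij_k}(x_k)\bigr) = \psi_{j_k l}(0) = 0,
\]
so $\psi_{il}(x) = \sum_{k=1}^n \psi_{il}(x_k) = 0$, i.e., $x \in \Ker\psi_{il}$.

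There is essentially no obstacle here; the statement is the standard ``finite support'' observation for elements of a direct sum of subspaces, combined with the defining property of a directed set. The only point worth being careful about is not to confuse the sum $\sum_{i\leq j}\Ker\psi_{ij}$ (a union-of-finite-sums inside $M_i$) with any infinite limiting construction — once one fixes a representation of $x$ as a finite sum, the argument is purely finite and mechanical.
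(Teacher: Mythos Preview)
Your proof is correct and follows essentially the same approach as the paper: write $x$ as a finite sum of elements in $\Ker\psi_{ij_r}$, use directedness of $\Lambda$ to pick a common upper bound $l$, and apply the compatibility $\psi_{il} = \psi_{j_r l}\circ\psi_{ij_r}$ to conclude $\psi_{il}(x)=0$.
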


\begin{proof}
  Assume $x = \sum_{r=1}^n x_r$ with each $x_r \in \Ker \psi_{i j_r}$ for some $j_r \geq i$. We can choose some $l \geq j_1, j_2, \dots, j_n$, since $\Lambda$ is directed.
  We observe that
  \[
    \psi_{il} (x)
    = \sum_{r=1}^n \psi_{il} (x_r)
    = \sum_{r=1}^n (\psi_{j_r l} \circ \psi_{i j_r}) (x_r)
    = 0.
  \]
  Then the result follows.
\end{proof}

\begin{lemma}\label{lem:colim-new}
  For any $i \leq j$, the pre-image of $M'_j$ under $\psi_{ij}$ is $M'_i$.
\end{lemma}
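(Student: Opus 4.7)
The plan is to establish the two inclusions $M'_i \subseteq \psi_{ij}^{-1}(M'_j)$ and $\psi_{ij}^{-1}(M'_j) \subseteq M'_i$ separately, using Lemma~\ref{lem:M'i} in both directions together with the compatibility $\psi_{jm} \circ \psi_{ij} = \psi_{im}$ of the structure maps.

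For the inclusion $M'_i \subseteq \psi_{ij}^{-1}(M'_j)$, I would take $x \in M'_i$ and apply Lemma~\ref{lem:M'i} to produce some $l \geq i$ with $\psi_{il}(x) = 0$. To show $\psi_{ij}(x) \in M'_j$, it suffices to exhibit some $m \geq j$ with $\psi_{jm}(\psi_{ij}(x)) = 0$. Using that $\Lambda$ is directed, I pick $m \geq j, l$; then $\psi_{jm}(\psi_{ij}(x)) = \psi_{im}(x) = \psi_{lm}(\psi_{il}(x)) = 0$, so $\psi_{ij}(x) \in \Ker \psi_{jm} \subseteq M'_j$.

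For the reverse inclusion, I take $x$ with $\psi_{ij}(x) \in M'_j$ and again apply Lemma~\ref{lem:M'i} to find some $m \geq j$ such that $\psi_{jm}(\psi_{ij}(x)) = 0$. By compatibility this equals $\psi_{im}(x)$, so $x \in \Ker \psi_{im}$; since $m \geq i$, this gives $x \in M'_i$.

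There is no genuine obstacle here: the statement is essentially a bookkeeping consequence of Lemma~\ref{lem:M'i}, whose effect is to replace a finite sum of kernel elements by a single kernel element for a sufficiently large index. The only point that deserves care is the appeal to directedness of $\Lambda$ when combining the two indices $l$ and $j$ in the first inclusion, and the observation that $m \geq j \geq i$ is automatic in the second inclusion, so the index $m$ already witnesses membership in $M'_i$.
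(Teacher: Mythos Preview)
Your proof is correct and follows essentially the same approach as the paper's own proof: both inclusions are obtained by invoking Lemma~\ref{lem:M'i} and using directedness together with the compatibility $\psi_{jm} \circ \psi_{ij} = \psi_{im}$, with only cosmetic differences in the choice of index names.
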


\begin{proof}
  Let $x \in M'_i$. Applying Lemma~\ref{lem:M'i} for $i$ and $x$, we have some $l \geq i$ such that $x \in \Ker \psi_{il}$. Choose some $l' \geq j, l$. Then
  \[
    (\psi_{jl'} \circ \psi_{ij}) (x)
    = \psi_{il'} (x)
    = (\psi_{ll'} \circ \psi_{il}) (x)
    = 0.
  \]
  Hence $\psi_{ij} (x) \in \Ker \psi_{jl'} \subseteq M'_j$.
  That is to say, the image of $M'_i$ under $\psi_{ij}$ is contained in $M'_j$.

  On the other hand, let $x \in M_i$ with $\psi_{ij}(x) \in M'_j$. Applying Lemma~\ref{lem:M'i} for $j$ and $\psi_{ij}(x)$, we have some $l \geq j$ such that $\psi_{ij}(x) \in \Ker \psi_{jl}$. Then
  \[
    \psi_{il} (x) = (\psi_{jl} \circ \psi_{ij}) (x) = 0.
  \]
  Hence $x \in \Ker \psi_{il} \subseteq M'_i$.
  Then the result follows.
\end{proof}

By Lemma~\ref{lem:colim-new}, for any $i \leq j$, we can consider the restriction $\psi_{ij}|_{M'_i} \colon M'_i \to M'_j$ of $\psi_{ij}$, and the morphism $\overline{\psi_{ij}} \colon M_i/M'_i \to M_j/M'_j$ induced by $\psi_{ij}$.
Then we obtain direct systems $(M'_i, \psi_{ij}|_{M'_i})$ and $(M_i/M'_i, \overline{\psi_{ij}})$ over $\Lambda$.
Moreover, it follows from Lemma~\ref{lem:colim-new} that each $\overline{\psi_{ij}}$ is an injection.

The following result shows that the direct system $(M_i/M'_i, \overline{\psi_{ij}})$ admits the same direct limit with $(M_i, \psi_{ij})$.

\begin{proposition}\label{prop:colim-mono}
  The canonical map $\phi_i \colon M_i/M'_i \to \varinjlim M_i/M'_i$ is an injection for each $i \in \Lambda$, and $\varinjlim M_i/M'_i \simeq \varinjlim M_i$.
\end{proposition}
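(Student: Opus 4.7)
The plan is to handle the two assertions separately, both by direct manipulation of the elementary description of the direct limit as $\bigsqcup_i M_i / {\sim}$, where $x_i \in M_i$ and $x_j \in M_j$ are equivalent iff $\psi_{ik}(x_i) = \psi_{jk}(x_j)$ for some $k \geq i,j$. The key input is Lemma~\ref{lem:M'i}, which lets us turn a statement of the form ``lies in $M'_j$'' into an equation ``$\psi_{jl}$ kills it''.

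For the injectivity of $\phi_i$, I would argue as follows. The remark immediately preceding the proposition observes that each transition map $\overline{\psi_{ij}} \colon M_i/M'_i \to M_j/M'_j$ is injective. Suppose $\phi_i(\overline{x}) = 0$ for some $\overline{x} \in M_i/M'_i$. By the description of the direct limit, there exists $j \geq i$ with $\overline{\psi_{ij}}(\overline{x}) = 0$, and then injectivity of $\overline{\psi_{ij}}$ forces $\overline{x} = 0$. This is the standard fact that canonical maps into a directed colimit with injective transition maps are injective.

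For the isomorphism $\varinjlim M_i/M'_i \simeq \varinjlim M_i$, the quotient maps $q_i \colon M_i \to M_i/M'_i$ commute with the transition maps by construction, so they assemble into a morphism $q \colon \varinjlim M_i \to \varinjlim M_i/M'_i$. Surjectivity of $q$ is immediate, since each $q_i$ is surjective and every element of the target limit is represented by some $\overline{x_i} \in M_i/M'_i$. For injectivity, suppose $q([x_i]) = 0$ for some $x_i \in M_i$. Unwinding the limit description, there exists $j \geq i$ with $\overline{\psi_{ij}(x_i)} = 0$ in $M_j/M'_j$, i.e.\ $\psi_{ij}(x_i) \in M'_j$. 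Applying Lemma~\ref{lem:M'i} to $j$ and $\psi_{ij}(x_i)$ yields some $l \geq j$ with $\psi_{jl}(\psi_{ij}(x_i)) = 0$, i.e.\ $\psi_{il}(x_i) = 0$. Hence $[x_i] = 0$ in $\varinjlim M_i$, so $q$ is injective.

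There is no real obstacle here; the whole point of the definition of $M'_i$ and of Lemma~\ref{lem:M'i} is precisely to make both steps work. The only thing one must be careful about is not to confuse ``vanishes modulo $M'_j$'' with ``lies in $\Ker \psi_{jl}$ for some $l$'', and the bridge between these two statements is exactly Lemma~\ref{lem:M'i}.
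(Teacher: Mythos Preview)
Your proof is correct and follows essentially the same approach as the paper: both arguments establish injectivity of $\phi_i$ via the injectivity of the transition maps $\overline{\psi_{ij}}$ (Lemma~\ref{lem:colim-new}), and both use Lemma~\ref{lem:M'i} as the key input for the isomorphism. The only cosmetic difference is that for the second assertion the paper passes through the short exact sequence of direct systems $0 \to (M'_i) \to (M_i) \to (M_i/M'_i) \to 0$, invokes exactness of filtered colimits, and then shows $\varinjlim M'_i = 0$; your direct element-level verification of injectivity of $q$ unwinds exactly the same computation without the categorical packaging.
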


\begin{proof}
  For any $i \in \Lambda$ and any $x + M'_i \in M_i/M'_i$, we mention that $\phi_i (x + M'_i) = 0$ if and only if $\overline{\psi_{il}} (x + M'_i) = 0$ for some $l \geq i$;
  see \cite[Lemma~5.30]{Rotman2009introduction}.
  We observe by Lemma~\ref{lem:colim-new} that $\overline{\psi_{ij}}$ is an injection for any $j \geq i$.
  It follows that $\phi_i$ is an injection.

  Consider the following exact sequence of direct systems
  \[
    0 \To (M'_i) \To (M_i) \To (M_i/M'_i) \To 0.
  \]
  We obtain by \cite[Theorem~2.6.15]{Weibel1994introduction} the exact sequence
  \[
    0 \To \varinjlim M'_i \To \varinjlim M_i \To \varinjlim M_i/M'_i \To 0.
  \]
  By Lemma~\ref{lem:M'i}, for any $i \in \Lambda$ and $x \in M'_i$, there exists some $j \geq i$ such that $x \in \Ker \psi_{ij}$. That is to say $\psi_{ij} (x) = 0$. We then obtain $\varinjlim M'_i = 0$. It follows that $\varinjlim M_i/M'_i \simeq \varinjlim M_i$.
\end{proof}

Based on the above procedure, we can reduce the structure morphisms in a direct system into monomorphisms;
see Proposition~\ref{prop:flat}.

Let $(M_i, \psi_{ji}\colon M_j \to M_i)$ be an inverse system over $\Lambda$ in $\Mod k$.
It admits the inverse limit $\varprojlim M_i$, which is the subspace of $\prod_{i \in \Lambda} M_i$ consisting of threads;
see \cite[Proposition~5.17]{Rotman2009introduction}.
Here, a \emph{thread} means an element $(m_i) \in \prod_{i \in \Lambda} M_i$ such that $\psi_{ji}(m_j) = m_i$ for any $i \leq j$.

Recall that $(M_i, \psi_{ji})$ is said to satisfy the \emph{Mittag-Leffler condition}, if for each $i \in \Lambda$, there exists some $j \geq i$ such that $\psi_{ji} (M_j) = \psi_{li} (M_l)$ for any $l \geq j$.
We mention that if each $M_i$ is finite dimensional, then $(M_i, \psi_{ji})$ satisfies the Mittag-Leffler condition naturally.

The following fact is well known;
see \cite[Proposition~3.5.7]{Weibel1994introduction}.

\begin{lemma}\label{lem:lim-exact}
  Assume
  \[
    0 \To (U_i) \To (V_i) \To (W_i) \To 0
  \]
  is a short exact sequence of inverse systems over $\Lambda$ in $\Mod k$. Then the following induced sequence of inverse limits
  \[
    0 \To \varprojlim U_i \To \varprojlim V_i \overset{g}{\To} \varprojlim W_i
  \]
  is exact. If moreover $(U_i)$ satisfies the Mittag-Leffler condition and $\Lambda$ is countable, then $g$ is surjective.
  \qed
\end{lemma}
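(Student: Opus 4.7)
The plan is to treat the two statements separately. Left exactness of $\varprojlim$ is formal and uses neither the Mittag--Leffler condition nor the countability of $\Lambda$; only the surjectivity of $g$ requires both. I write $f \colon (U_i) \to (V_i)$ and $h \colon (V_i) \to (W_i)$ for the given morphisms of inverse systems, and I write $\psi^U_{ji}, \psi^V_{ji}, \psi^W_{ji}$ for their structure maps.

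For the left exact part I argue directly on threads. Given $(v_i) \in \varprojlim V_i$ with $h_i(v_i) = 0$ for every $i$, exactness at $V_i$ yields a unique $u_i \in U_i$ with $f_i(u_i) = v_i$, the uniqueness coming from injectivity of $f_i$. For $i \leq j$, both $\psi^U_{ji}(u_j)$ and $u_i$ are sent to $v_i$ by the injection $f_i$, hence coincide, so $(u_i)$ is a thread. Injectivity of $\varprojlim f$ is immediate from componentwise injectivity, and this establishes exactness at the first two spots.

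For the surjectivity I fix $(w_i) \in \varprojlim W_i$ and pass to the fibre system $X_i := h_i^{-1}(w_i) \subseteq V_i$. Each $X_i$ is a nonempty $f_i(U_i)$-torsor, the transitions restrict to $\psi^V_{ji} \colon X_j \to X_i$, and a preimage of $(w_i)$ in $\varprojlim V_i$ is exactly an element of $\varprojlim X_i$. I then consider the stable image $X^\infty_i := \bigcap_{j \geq i} \psi^V_{ji}(X_j)$. The Mittag--Leffler condition on $(U_i)$ ensures that the subgroups $\psi^U_{ji}(U_j)$ stabilize in $f_i(U_i)$ as $j$ grows; the cosets $\psi^V_{ji}(X_j)$ are then eventually cosets of one and the same subgroup, and a descending chain of such cosets must stabilize because distinct cosets of a single subgroup are disjoint. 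Consequently $X^\infty_i$ is nonempty, and a short verification (using the same stabilization) shows that each transition $X^\infty_j \to X^\infty_i$ is surjective. Finally, I extract a countable cofinal sequence $i_0 \leq i_1 \leq i_2 \leq \cdots$ in $\Lambda$, which exists because $\Lambda$ is countable and directed, and build a compatible sequence in $(X^\infty_{i_n})$ by recursive lifting. This cofinal thread extends uniquely to a thread on all of $\Lambda$, giving the required preimage of $(w_i)$.

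The main obstacle is the stable image argument: one has to recognize the fibres $X_i$ as torsors over $f_i(U_i)$, observe that compatibility under the transition maps is controlled entirely by the morphisms in $(U_i)$, and then invoke the coset observation above to extract nonemptiness and surjective transitions. Once these torsor-theoretic facts are in hand, the Mittag--Leffler stabilization is automatic, and countability of $\Lambda$ enters only to guarantee that the recursion is indexed by $\mathbb{N}$; the lemma is well known to fail without the countability assumption, so this last ingredient is essential rather than cosmetic.
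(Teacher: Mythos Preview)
Your proof is correct. Note, however, that the paper does not give its own argument for this lemma: the statement is followed immediately by a \qed\ and a citation to \cite[Proposition~3.5.7]{Weibel1994introduction}, so it is simply quoted as well known. What you have written is essentially the standard proof (as in Weibel for towers over $\mathbb{N}$), supplemented by the observation that a countable directed set admits a cofinal $\mathbb{N}$-indexed chain; your torsor/stable-image formulation is a clean way to organize the lifting step.
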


The following result is crucial. It will be used to show the projective property of some flat representation in $\rep(Q)$; see Section~\ref{sec:proj}.

\begin{proposition}\label{prop:ft}
  Let $\mathcal{C}$ be a $k$-linear category and let $\Lambda$ be a countable directed set. Assume that $(M_i, \psi_{ij}\colon M_i \to M_j)$ is a direct system over $\Lambda$ in $\mathcal{C}$, with the direct limit $(M,\phi_i\colon M_i\to M)$. If some morphism $h\colon M\to V$ satisfies that each morphism $h \circ \phi_i$ factors through $g\colon U\to V$ and each $k$-linear space $\Homc(M_i, U)$ is finite dimensional, then $h$ factors through $g$.
\end{proposition}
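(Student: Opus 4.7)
The plan is to convert the factorization problem into a surjectivity question for inverse limits of Hom-spaces and then invoke Lemma~\ref{lem:lim-exact}. The underlying observation is that, by the universal property of the direct limit, a lift $\tilde h \colon M \to U$ of $h$ through $g$ corresponds precisely to a compatible family $(f_i \colon M_i \to U)_{i \in \Lambda}$ with $g \circ f_i = h \circ \phi_i$ and $f_j \circ \psi_{ij} = f_i$ for all $i \leq j$; the task is to construct such a family.

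To set things up, apply $\Homc(-, U)$ and $\Homc(-, V)$ to the direct system $(M_i, \psi_{ij})$ to obtain inverse systems of $k$-spaces $(V_i := \Homc(M_i, U))$ and $(W_i := \Homc(M_i, V))$, both with transition maps given by precomposition with $\psi_{ij}$. Postcomposition with $g$ yields a morphism of inverse systems $(V_i) \to (W_i)$; letting $(K_i)$ denote its kernel and $(W'_i)$ its image, one obtains a short exact sequence of inverse systems
\[
  0 \To (K_i) \To (V_i) \To (W'_i) \To 0.
\]
By the universal property of the colimit one has canonical identifications $\varprojlim V_i \simeq \Homc(M, U)$ and $\varprojlim W_i \simeq \Homc(M, V)$, under which $h$ corresponds to the thread $(h \circ \phi_i)$; by the factorization hypothesis this thread lies in the subspace $\varprojlim W'_i$.

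Since each $V_i = \Homc(M_i, U)$ is finite dimensional, so is its subspace $K_i$, and hence the inverse system $(K_i)$ trivially satisfies the Mittag-Leffler condition. As $\Lambda$ is countable, Lemma~\ref{lem:lim-exact} applies and yields that the induced map $\varprojlim V_i \to \varprojlim W'_i$ is surjective. Choose any preimage $(f_i) \in \varprojlim V_i$ of $(h \circ \phi_i)$; the universal property of the direct limit then produces a unique morphism $\tilde h \colon M \to U$ with $\tilde h \circ \phi_i = f_i$ for all $i$. From $g \circ \tilde h \circ \phi_i = g \circ f_i = h \circ \phi_i$ for every $i$, the uniqueness part of the universal property forces $g \circ \tilde h = h$, as required.

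The main content is organizing the diagram so that Lemma~\ref{lem:lim-exact} is directly applicable: one checks by a routine diagram chase that $(K_i)$ and $(W'_i)$ are preserved under the transition maps of $(V_i)$ and $(W_i)$, and that $\varprojlim \Homc(M_i, -) \simeq \Homc(\varinjlim M_i, -)$. After that, the proof is purely formal. The two hypotheses enter in exactly the expected places: finite-dimensionality of $\Homc(M_i, U)$ provides the Mittag-Leffler condition on $(K_i)$, and countability of $\Lambda$ provides the surjectivity conclusion in Lemma~\ref{lem:lim-exact}.
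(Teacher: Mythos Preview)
Your proposal is correct and follows essentially the same approach as the paper's proof: both form the short exact sequence of inverse systems $0 \to (\Ker \Homc(M_i,g)) \to (\Homc(M_i,U)) \to (\Im \Homc(M_i,g)) \to 0$, use finite-dimensionality of $\Homc(M_i,U)$ to get the Mittag-Leffler condition on the kernels, apply Lemma~\ref{lem:lim-exact} to obtain surjectivity onto $\varprojlim \Im \Homc(M_i,g)$, and then use the identification $\varprojlim \Homc(M_i,-) \simeq \Homc(M,-)$ to extract the desired lift. The only cosmetic difference is that the paper phrases the final step via a commutative square of the natural isomorphisms $\theta_U,\theta_V$, whereas you invoke the universal property directly; these are the same argument.
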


\[\begin{tikzcd}[column sep = 3em]
  &&M \ar[dd,"h"]
  \\
  M_i \rar["\psi_{ij}"]
    \ar[rru, out= 50, in=180, "\phi_i" near start]
    \ar[rd,  out=-50, in=165, dashed]
  &M_j
    \ar[ru, "\phi_j"]
    \ar[d, dashed]
  \\
  &U \rar["g"]  &V
\end{tikzcd}\]

\begin{proof}
  For any $i \leq j$ and $W \in \mathcal{C}$, we consider the $k$-linear map
  \[
    \Homc(\psi_{ij}, W) \colon \Homc(M_j, W) \To \Homc(M_i, W), \quad
    f \mapsto f \circ \psi_{ij},
  \]
  induced by $\psi_{ij}$.
  Then $(\Homc(M_i, W), \Homc(\psi_{ij}, W))$ forms an inverse system in $\Mod k$. We mention the natural isomorphism
  \begin{equation}\label{eq:limit}
    \theta_W \colon
    \Homc(M, W) \overset{\simeq}{\To} \varprojlim \Homc(M_i, W), \quad
    f \mapsto ( f \circ \phi_i ).
  \end{equation}

  For each $i \in \Lambda$, we have the $k$-linear map
  \[
    \Homc(M_i, g) \colon \Homc(M_i, U) \To \Homc(M_i, V), \quad
    f \mapsto g \circ f,
  \]
  induced by $g$.
  Express it as the composition of a surjection and the inclusion
  \[
    \Homc(M_i, g) \colon \Homc(M_i, U) \overset{u_i}{\To}
    \Im \Homc(M_i, g) \overset{\inc_i}{\subseteq} \Homc(M_i, V).
  \]
  Consider the induced exact sequence
  \[
    0 \To \Ker \Homc(M_i, g) \To \Homc(M_i, U) \overset{u_i}{\To} \Im \Homc(M_i, g) \To 0.
  \]
  We observe that each $\Ker \Homc(M_i, g)$ is finite dimensional. Then the inverse system $(\Ker \Homc(M_i, g))$ satisfies the Mittag-Leffler condition. By Lemma~\ref{lem:lim-exact}, we obtain the following exact sequence of inverse limits
  \[
    0 \to \varprojlim \Ker \Homc(M_i, g) \to
    \varprojlim \Homc(M_i, U) \xrightarrow{\overleftarrow{u_i}}
    \varprojlim \Im \Homc(M_i, g) \to 0.
  \]
  Here, $\overleftarrow{u_i}$ is the induced map.
  Then we obtain the commutative triangle
  \[\begin{tikzcd}[column sep = -1.5em]
    &\varprojlim \Im \Homc(M_i, g)\\
    \varprojlim \Homc(M_i, U) &&\varprojlim \Homc(M_i, V).
    \ar[from=2-1, to=2-3, "{\overleftarrow{\Homc(M_i, g)}}"]
    \ar[from=2-1, to=1-2, start anchor= 30, two heads, "\overleftarrow{u_i}"]
    \ar[from=1-2, to=2-3, end   anchor=150, hook, "\overleftarrow{\inc_i}"]
  \end{tikzcd}\]
  Here, $\overleftarrow{\inc_i}$ is an injection by Lemma~\ref{lem:lim-exact}.

  For any $i \leq j$, we observe that
  \[
    \Homc(\psi_{ij}, V) (h \circ \phi_j)
    = h \circ \phi_j \circ \psi_{ij}
    = h \circ \phi_i.
  \]
  Then $( h \circ \phi_i )$ is an element in $\varprojlim \Homc(M_i, V)$.
  By the hypotheses, each $h \circ \phi_i$ factors through $g$. In other words, $h \circ \phi_i$ lies in $\Im \Homc(M_i, g)$.
  Then $( h \circ \phi_i )$ is also an element in $\varprojlim \Im \Homc(M_i, g)$.
  Since $\overleftarrow{u_i}$ is a surjection, there exists some $x \in \varprojlim \Homc(M_i, U)$ such that $\overleftarrow{u_i} (x) = ( h \circ \phi_i )$. Moreover
  \[
    \overleftarrow{\Homc(M_i, g)} (x)
    = \overleftarrow{\inc_i} ( \overleftarrow{u_i} (x) )
    = ( \inc_i( h \circ \phi_i ) ) = ( h \circ \phi_i ).
  \]
  In other words, $( h \circ \phi_i )$ lies in $\Im \overleftarrow{\Homc(M_i, g)}$.

  Consider the commutative diagram
  \[\begin{tikzcd}[sep = large, column sep = 5.5em]
    \Homc(M, U)\dar["\theta_U"'] &\Homc(M, V)\dar["\theta_V"]\\
    \varprojlim \Homc(M_i, U)    &\varprojlim \Homc(M_i, V).
    \ar[from=1-1, to=1-2, "{\Homc(M, g)}"]
    \ar[from=2-1, to=2-2, "{\overleftarrow{\Homc(M_i, g)}}"]
  \end{tikzcd}\]
  We observe by the isomorphism (\ref{eq:limit}) in the case $\theta_V$ that $\theta_V (h) = ( h \circ \phi_i )$. It follows that $h$ lies in $\Im \Homc(M, g)$. In other words, $h$ factors through $g$.
\end{proof}

\section{A class of flat representations}
\label{sec:Xp-Yp}

Let $Q$ be a quiver without oriented cycles. We mention that both $\proj(Q)$ and $\inj(Q)$ are Krull--Schmidt categories.

Recall the Lazard--Govorov Theorem that a module over a unital ring is flat if and only if it is a direct limit of finitely generated free modules; see \cite{Govorov1965flat,Lazard1964Sur}.
It is generalized to functor categories as \cite[Theorem~3.2]{OberstRohrl1970Flat}.
Especially, a flat representation in $\Rep(Q)$ is a direct limit of finitely generated projective representations.
We will introduce a class of flat representations.

Let $[p]$ be an equivalence class of right infinite paths. 
Given a vertex $a$, we recall that $[p]_a$ is the subset of $[p]$ consisting of right infinite paths $q$ with $t(q) = a$.

Inspired by the irreducible representations of Leavitt path algebras introduced in
\cite[Subsection~3.1]{Chen2015Irreducible}
(compare the point modules studied in \cite{Smith2014space}),
we define a representation $X_{[p]}$ as follows.
For each vertex $a$, we let
\[
  X_{[p]}(a) = \bigoplus_{u \in[p]_a} k u.
\]
We mention that $X_{[p]} (a) = 0$ if $[p]_a = \emptyset$.
For each arrow $\alpha \colon a \to b$, we let
\[
  X_{[p]} (\alpha) \colon X_{[p]} (a) \To X_{[p]} (b), \quad
  u \mapsto \alpha u,
\]
for any right infinite path $u \in [p]_a$.

Assume $p = \alpha_1 \alpha_2 \cdots \alpha_i \cdots$. We set $a_i = t(\alpha_{i+1})$ for each $i\geq0$.
Recall that $P_a$ is the corresponding projective representation for any vertex $a$.
For each $j > i$, the finite path $\alpha_{i+1} \alpha_{i+2} \cdots \alpha_j$ induces a morphism $\psi_{ij} \colon P_{a_i} \to P_{a_j}$, such that
\[
  \psi_{ij} (b) \colon P_{a_i} (b) \To P_{a_j} (b), \quad
  u \mapsto u \alpha_{i+1} \alpha_{i+2} \cdots \alpha_j,
\]
for any vertex $b$ and any finite path $u \in Q(a_i, b)$.
Set $\psi_{ii} = \1_{P_{a_i}}$. We observe that $(P_{a_i}, \psi_{ij})$ is a direct system over $(\N, \leq)$.

For each $i \geq 0$, the right infinite path $\alpha_{i+1} \alpha_{i+2} \cdots \alpha_j \cdots$ induces a morphism $\phi_i \colon P_{a_i} \to X_{[p]}$, such that
\[
  \phi_i (b) \colon P_{a_i} (b) \To X_{[p]} (b), \quad
  u \mapsto u \alpha_{i+1} \alpha_{i+2} \cdots \alpha_j \cdots,
\]
for any vertex $b$ and any finite path $u \in Q(a_i, b)$.

The following result shows that $X_{[p]}$ is a flat representation in $\Rep(Q)$.

\begin{lemma}\label{lem:Xp-flat}
  $(X_{[p]}, \phi_i)$ is the direct limit of $(P_{a_i}, \psi_{ij})$ in $\Rep(Q)$.
\end{lemma}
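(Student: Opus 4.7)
The plan is to verify that $(X_{[p]}, \phi_i)$ satisfies the universal property of the colimit of the direct system $(P_{a_i}, \psi_{ij})$. First I would confirm on basis elements that $\phi_j \circ \psi_{ij} = \phi_i$ for $i \leq j$: both composites send a finite path $u \in Q(a_i, b)$ to the right infinite path $u \alpha_{i+1}\alpha_{i+2}\cdots$. This makes $(X_{[p]}, \phi_i)$ a cocone and implies $\Im \phi_i \subseteq \Im \phi_j$ inside $X_{[p]}$.

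Since colimits in $\Rep(Q)$ are computed pointwise in $\Mod k$, it suffices to show, at each vertex $b$, that $(X_{[p]}(b), \phi_i(b))$ is a direct limit of $(P_{a_i}(b), \psi_{ij}(b))$ in $\Mod k$. I would reduce this to two claims: \emph{(i)} each $\phi_i(b)$ is injective; \emph{(ii)} $X_{[p]}(b) = \bigcup_{i \geq 0} \Im \phi_i(b)$. Since the images already form an ascending chain, these two claims suffice: given any compatible family $h_i \colon P_{a_i}(b) \to W$, one defines $h \colon X_{[p]}(b) \to W$ by $h \circ \phi_i(b) = h_i$, with well-definedness ensured by injectivity of the $\phi_i(b)$ together with the compatibility of the $h_i$.

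Claim (ii) essentially unwinds the equivalence relation on right infinite paths. Given $v \in [p]_b$, the equivalence $v \sim p$ yields positive integers $m, n$ with $v = \beta_1 \cdots \beta_{n-1} \alpha_m \alpha_{m+1} \cdots$, where the (possibly trivial) finite prefix $w = \beta_1 \cdots \beta_{n-1}$ lies in $Q(a_{m-1}, b)$; hence $v = \phi_{m-1}(b)(w)$, and extending linearly covers all of $X_{[p]}(b)$.

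Claim (i) is the step where the standing assumption that $Q$ has no oriented cycles enters, and I expect it to be the main (albeit mild) obstacle. If distinct finite paths $u, u' \in Q(a_i, b)$ satisfied $u \alpha_{i+1}\alpha_{i+2}\cdots = u' \alpha_{i+1}\alpha_{i+2}\cdots$, then matching the two infinite sequences arrow by arrow (with the longer of $u, u'$ overflowing into the tail) would force $\alpha_{i+1}\alpha_{i+2}\cdots$ to be periodic, in the spirit of Lemma~\ref{lem:cycle}, thereby exhibiting an oriented cycle in $Q$ and contradicting the standing hypothesis. Hence $\phi_i(b)$ sends distinct basis paths to distinct basis elements of $X_{[p]}(b)$ and is injective.
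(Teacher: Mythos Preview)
Your proposal is correct and follows essentially the same approach as the paper's proof: verify the cocone identity $\phi_j \circ \psi_{ij} = \phi_i$, use Lemma~\ref{lem:cycle} together with the absence of oriented cycles to get injectivity of each $\phi_i(b)$, and check that $X_{[p]} = \bigcup_{i\ge 0}\Im\phi_i$. The paper is simply terser, stating these three facts without spelling out the pointwise universal-property verification you give.
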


\begin{proof}
We observe that $\phi_j \circ \psi_{ij} = \phi_i$ for any $i \leq j$.
Since $Q$ contains no oriented cycles, the right infinite path $\alpha_{i+1} \alpha_{i+2} \cdots \alpha_j \cdots$ is not cyclic. Lemma~\ref{lem:cycle} implies that each $\phi_i (b)$ is an injection. Then $\Im \phi_i \simeq P_{a_i}$.
Since $\bigcup_{i \geq 0} \Im \phi_i = X_{[p]}$, then the result follows.
\end{proof}

We observe that the direct system $(P_{a_i}, \psi_{ij})$ depends on the choice of representative $p$, but the direct limit $X_{[p]}$ does not.

We mention that $X_{[p]} \not\simeq P_a$ for any vertex $a$.
Indeed, we observe that $\rad X_{[p]} = X_{[p]}$ and hence $\top X_{[p]} = 0$. Then the result follows since $\top P_a \simeq S_a$.

One can see from Lemma~\ref{lem:Xp-flat} that the flat representation $X_{[p]}$ is the direct limit of finitely generated projective subrepresentations.
The following result shows that this property holds for general flat representations in $\Rep(Q)$.
It strengthens the Lazard--Govorov Theorem in the special case $\Rep(Q)$.

\begin{proposition}\label{prop:flat}
  A flat representation in $\Rep(Q)$ is a direct limit of finitely generated projective subrepresentations.
\end{proposition}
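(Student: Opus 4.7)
The plan is to combine the functor-category Lazard--Govorov theorem (recalled just before the statement) with the monomorphism reduction of Proposition~\ref{prop:colim-mono}, and then upgrade the resulting finitely generated subrepresentations of $M$ to projectives. Concretely, starting from an expression $M = \varinjlim P_i$ with each $P_i$ finitely generated projective and structure maps $\psi_{ij} \colon P_i \to P_j$, I set $P'_i = \sum_{i \leq j} \Ker \psi_{ij}$ and apply Proposition~\ref{prop:colim-mono}: this produces a direct system $(P_i/P'_i, \overline{\psi_{ij}})$ with monic structure maps and monic canonical maps $P_i/P'_i \to M$. Each $P_i/P'_i$ is thereby realised as a subrepresentation of $M$, and is finitely generated as a quotient of $P_i$.

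The main obstacle is to show that $P_i/P'_i$ is actually projective. Heredity of $\Rep(Q)$ (Lemma~\ref{lem:here}) combined with the long exact sequence for $\operatorname{Tor}$ forces every subrepresentation of the flat $M$ to be flat, so $P_i/P'_i$ is finitely generated and flat. The proposition thus reduces to the following key lemma, which is where the real work lies: \emph{every finitely generated flat representation in $\Rep(Q)$ is projective.}

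To prove this lemma, let $N$ be finitely generated and flat. Interval finiteness of $Q$ implies that $\supp P_a$ contains no right infinite path — otherwise one could manufacture an infinite sequence of paths ending at a fixed vertex, contradicting the finiteness of some $Q(-,-)$ — and hence neither does $\supp N$ as a quotient of a finite direct sum of $P_a$'s. By Lemma~\ref{lem:ess}(\ref{item:lem:ess:1}), $\rad N$ is superfluous in $N$, so a surjection $\bigoplus_l P_{c_l}^{n_l} \to \top N$ onto the finite semisimple top lifts through $N \to \top N$ to a surjection $\pi \colon \bigoplus_l P_{c_l}^{n_l} \to N$. Heredity gives $K := \Ker \pi$ projective, say $K \simeq \bigoplus_m P_{d_m}$. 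Now apply $S_c \otimes_{kQ} -$ to the short exact sequence $0 \to K \to \bigoplus_l P_{c_l}^{n_l} \to N \to 0$; by flatness of $N$ this remains short exact. Using the elementary computation $S_c \otimes P_a \simeq k$ for $a = c$ and $0$ otherwise, the comparison with $S_c \otimes \top N$ (which has the same dimension as $S_c \otimes \bigoplus_l P_{c_l}^{n_l}$) forces $S_c \otimes K = 0$ for every vertex $c$; the $P_a$-decomposition of $K$ then yields $K = 0$, so $N \simeq \bigoplus_l P_{c_l}^{n_l}$ is projective.
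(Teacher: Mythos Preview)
Your argument is correct (under the paper's global assumption that $Q$ is interval finite) but takes a genuinely different route. Both proofs start the same way---apply Proposition~\ref{prop:colim-mono} to a Lazard presentation $M=\varinjlim P_i$ and reduce to showing each $P_i/P'_i$ is projective---but diverge there. The paper stays inside the direct system: heredity of $\Rep(Q)$ makes each $\Ker\psi_{ij}$ a direct summand of $P_i$, and a Krull--Schmidt count (the number of indecomposable summands of $\Ker\psi_{ij}$ is bounded by that of $P_i$) forces the increasing chain $(\Ker\psi_{ij})_{j\ge i}$ to stabilise at some $\Ker\psi_{il}$; then $P'_i=\Ker\psi_{il}$ and $P_i/P'_i\simeq\Im\psi_{il}\subseteq P_l$ is projective by heredity again. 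You instead establish the abstract lemma ``finitely generated flat $\Rightarrow$ projective'' via a projective cover and the functors $S_c\otimes_{kQ}-$. Your route yields an independently useful by-product, but costs more machinery and more hypotheses: your step ``$\supp P_a$ contains no right infinite path'' genuinely needs interval finiteness (so that Lemma~\ref{lem:ess}(\ref{item:lem:ess:1}) applies to $N$), whereas the paper's stabilisation argument uses only the Section~\ref{sec:Xp-Yp} standing assumption that $Q$ has no oriented cycles, hence proves the proposition in slightly greater generality. One citation nitpick: the heredity you invoke is that of $\Rep(Q)$ (recorded just before Lemma~\ref{lem:proj-inj}), not Lemma~\ref{lem:here}, which concerns $\rep(Q)$.
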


\begin{proof}
  Assume $M$ is the direct limit of a direct system $(M_i, \psi_{ij} \colon M_i \to M_j)$ over a directed set $\Lambda$ in $\Rep(Q)$ with each $M_i \in \proj(Q)$. We set $M'_i = \sum_{i \leq j} \Ker \psi_{ij}$ for each $i \in \Lambda$.
  By Lemma~\ref{lem:colim-new}, we can let $\overline{\psi_{ij}} \colon M_i/M'_i \to M_j/M'_j$ be the morphism induced by $\psi_{ij}$, and then $(M_i/M'_i, \overline{\psi_{ij}})$ forms a direct system.
  By Proposition~\ref{prop:colim-mono}, we have that $M \simeq \varinjlim M_i/M'_i$ and each canonical morphism $M_i/M'_i \to M$ is an injection.
  It remains to show that $M_i/M'_i \in \proj(Q)$ for any $i \in \Lambda$.

  Let $i \in \Lambda$. We observe that $\Im \psi_{ij}$ is projective for any $j \geq i$, since $M_j$ is projective and $\Rep(Q)$ is hereditary.
  We obtain a split exact sequence
  \begin{equation}\label{eq:exact}
    0 \To \Ker \psi_{ij} \overset{u_j}{\To} M_i \To \Im \psi_{ij} \To 0.
  \end{equation}
  Here $u_j$ is the inclusion.
  Then $\Im \psi_{ij}$ and $\Ker \psi_{ij}$ are direct summands of $M_i$, and hence lie in $\proj(Q)$.
  We mention that there exists some morphism $v_j \colon M_i \to \Ker \psi_{ij}$ such that $v_j \circ u_j = \1_{\Ker \psi_{ij}}$.

  For any $j' \geq j \geq i$, we can consider the inclusions $u_{j'} \colon \Ker \psi_{ij'} \to M_i$ and $w \colon \Ker \psi_{ij} \to \Ker \psi_{ij'}$. We observe that $u_{j'} \circ w = u_j$ and hence
  $(v_j \circ u_{j'}) \circ w = v_j \circ u_j = \1_{\Ker \psi_{ij}}$.
  It follows that $\Ker \psi_{ij}$ is a direct summand of $\Ker \psi_{ij'}$.
  We may assume $M_i \simeq \bigoplus_{r=1}^n P_{a_r}$ for some vertices $a_1, a_2, \dots, a_n$. Then the number of indecomposable direct summands of $\Ker \psi_{ij}$ for a certain decomposition is less than or equal to the one of $\Ker \psi_{ij'}$, and they are both less than or equal to $n$. It follows that there exists some $l \geq i$ such that $\Ker \psi_{il} = \Ker \psi_{ij}$ for any $j \geq l$.

  For any $j \geq i$, we can choose some $j' \geq j, l$. We have that
  $\Ker \psi_{ij} \subseteq \Ker \psi_{ij'} = \Ker \psi_{il}$.
  We then obtain $M'_i = \Ker \psi_{il}$.
  It follows from the exact sequence (\ref{eq:exact}) in the case $\psi_{il}$ that
  $M_i/M'_i = M_i/\Ker \psi_{il} \simeq \Im \psi_{il}$
  and hence $M_i/M'_i \in \proj(Q)$.
\end{proof}

We study morphisms between representations of the form $X_{[p]}$.

\begin{lemma}\label{lem:pairwise}
  Let $[p]$ and $[q]$ be equivalence classes of right infinite paths. Assume $g \colon X_{[p]} \to X_{[q]}$ is a nonzero morphism. Then $[p] = [q]$ and $g = r \1_{X_{[p]}}$ for some nonzero $r \in k$.
\end{lemma}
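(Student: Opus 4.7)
The plan is to translate the morphism $g$ into combinatorial data via the direct limit presentation $X_{[p]} = \varinjlim P_{a_i}$ of Lemma~\ref{lem:Xp-flat} and the identification $\Hom(P_{a_i}, X_{[q]}) \simeq X_{[q]}(a_i)$ from Lemma~\ref{lem:proj-inj}. Fix a representative $p = \alpha_1 \alpha_2 \cdots$ of $[p]$, set $a_i = t(\alpha_{i+1})$, and write $p^{(i)} = \alpha_{i+1} \alpha_{i+2} \cdots$ for the $i$-th tail. Each composite $g \circ \phi_i$ corresponds under Lemma~\ref{lem:proj-inj} to the element $x_i := g(a_i)(p^{(i)}) \in X_{[q]}(a_i)$, and the relation $\phi_j \circ \psi_{ij} = \phi_i$ translates into the compatibility
\[
  X_{[q]}(\alpha_{i+1} \cdots \alpha_j)(x_j) = x_i
  \quad \text{for all } i \leq j.
\]

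The key step is to pin down the support of each $x_i$. Writing $x_i = \sum_{v \in [q]_{a_i}} c_{i,v}\, v$ as a finite sum and unwinding the action of $X_{[q]}(\alpha_{i+1} \cdots \alpha_j)$ on basis elements, the compatibility forces every $v$ with $c_{i,v} \neq 0$ to begin with the arrows $\alpha_{i+1}, \alpha_{i+2}, \ldots, \alpha_j$ for every $j \geq i$. Letting $j$ grow unboundedly pins $v$ down to the single right infinite path $p^{(i)}$, so $x_i = c_i\, p^{(i)}$ for some scalar $c_i$. If $g$ is nonzero then some $x_N$ is nonzero, which in particular places $p^{(N)}$ in $[q]_{a_N}$ and hence forces $[p] = [p^{(N)}] = [q]$.

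Finally, substituting $x_j = c_j\, p^{(j)}$ back into the compatibility and using $\alpha_{i+1} \cdots \alpha_j \cdot p^{(j)} = p^{(i)}$ yields $c_i = c_j$, so all the $c_i$ share a common nonzero value $r \in k$. Both $g$ and $r \cdot \1_{X_{[p]}}$ then agree on every $\phi_i$, and since $\bigcup_i \Im \phi_i = X_{[p]}$ (from the proof of Lemma~\ref{lem:Xp-flat}), they coincide. I expect the support-chasing step to be the main obstacle: the crucial point is that compatibility with arbitrarily large $j$ eliminates every basis vector in $X_{[q]}(a_i)$ except the distinguished one $p^{(i)}$, leveraging the standing hypothesis in Section~\ref{sec:Xp-Yp} that $Q$ has no oriented cycles so that the tails $p^{(i)}$ are genuinely distinct right infinite paths.
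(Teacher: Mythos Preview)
Your proof is correct and follows essentially the same approach as the paper's. Both arguments reduce to the elements $x_i = g(a_i)(p^{(i)}) \in X_{[q]}(a_i)$, use the compatibility $X_{[q]}(\alpha_{i+1}\cdots\alpha_j)(x_j) = x_i$ (the paper only writes the case $j = i+1$ and iterates), and then chase supports to conclude $x_i = r\,p^{(i)}$; the only cosmetic difference is that you package this via the direct-limit presentation and Lemma~\ref{lem:proj-inj}, whereas the paper computes directly on basis elements and finishes by evaluating $g(a)(p')$ for an arbitrary $p' \in [p]_a$ rather than invoking $\bigcup_i \Im\phi_i = X_{[p]}$.
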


\begin{proof}
  Assume $p=\alpha_1 \alpha_2 \cdots \alpha_i \cdots$. For each $i \geq 0$, we set $a_i = t(\alpha_{i+1})$.
  We observe that for any vertex $a$ and any right infinite path $p'$ in $[p]_{a}$, there exists some $i \geq 0$ and some finite path $w \in Q(a_i, a)$ such that $p' = w \alpha_{i+1} \alpha_{i+2} \cdots \alpha_l \cdots$.
  Consider the commutative diagram
  \[\begin{tikzcd}[sep = large]
    X_{[p]} (a_i) &X_{[q]} (a_i)\\
    X_{[p]} (a)   &X_{[q]} (a).
    \ar[from=1-1, to=2-1, "X_{[p]} (w)"']
    \ar[from=1-2, to=2-2, "X_{[q]} (w)"]
    \ar[from=1-1, to=1-2, "g(a_i)"]
    \ar[from=2-1, to=2-2, "g(a)"]
  \end{tikzcd}\]
  We observe that
  $p' = X_{[p]} (w) (\alpha_{i+1} \alpha_{i+2} \cdots \alpha_l \cdots)$.
  It follows that
  \begin{equation}\label{eq:g(a)}
    \begin{split}
      g (a) (p')
      &= ( g (a) \circ X_{[p]} (w) )
         (\alpha_{i+1} \alpha_{i+2} \cdots \alpha_l \cdots)\\
      &= ( X_{[q]} (w) \circ g (a_i) )
         (\alpha_{i+1} \alpha_{i+2} \cdots \alpha_l \cdots).
    \end{split}
  \end{equation}

  We claim that there exists some $i \geq 0$ such that
  $g (a_i) (\alpha_{i+1} \alpha_{i+2} \cdots \alpha_l \cdots) \neq 0$.
  Otherwise, for any vertex $a$ and any right infinite path $p'$ in $[p]_{a}$, we have $g (a) (p') = 0$ by (\ref{eq:g(a)}). It follows that $g = 0$, which is a contradiction.

  Hence, without loss of generality, we may assume $g(a_0) (p) \neq 0$.
  Then for any $i>0$, we observe by (\ref{eq:g(a)}) in the case $a = a_0$, $p' = p$ and $w = \alpha_1 \alpha_2 \cdots \alpha_i$ that
  \[
    0 \neq g (a_0) (p) =
    ( X_{[q]} (\alpha_1 \alpha_2 \cdots \alpha_i) \circ g (a_i) )
    (\alpha_{i+1} \alpha_{i+2} \cdots \alpha_l \cdots).
  \]
  In particular, we have $g(a_i) (\alpha_{i+1} \alpha_{i+2} \cdots \alpha_l \cdots) \neq 0$.

  For each $i \geq 0$, we assume
  \begin{equation}\label{eq:g(a_i)}
    g (a_i) (\alpha_{i+1} \alpha_{i+2} \cdots \alpha_l \cdots)
    = \sum_{j=1}^{n_i} r_{ij} u_{ij},
  \end{equation}
  where $n_i \geq 1$, $0 \neq r_{ij} \in k$, and $u_{ij} \in [q]_{a_i}$.

  We observe that
  \[\begin{split}
    \sum_{j=1}^{n_i} r_{ij} u_{ij}
    &= g (a_i) (\alpha_{i+1} \alpha_{i+2} \cdots \alpha_l \cdots)\\
    &= ( X_{[q]} (\alpha_{i+1}) \circ g (a_{i+1}) )
       (\alpha_{i+2} \alpha_{i+3} \cdots \alpha_l \cdots)\\
    &= X_{[q]} (\alpha_{i+1})
       \biggl( \sum_{j=1}^{n_{i+1}} r_{i+1,j} u_{i+1,j} \biggr)\\
    &= \sum_{j=1}^{n_{i+1}} r_{i+1,j} \alpha_{i+1} u_{i+1,j}.
  \end{split}\]
  Here, the first equality is (\ref{eq:g(a_i)}).
  The second equality is (\ref{eq:g(a)}) in the case $a = a_i$, $p' = \alpha_{i+1} \alpha_{i+2} \cdots \alpha_l \cdots$ and $w = \alpha_{i+1}$.
  The third equality holds by (\ref{eq:g(a_i)}) in the case $i+1$.
  The fourth equality holds by the definition of $X_{[q]} (\alpha_{i+1})$.

  Therefore, we have that $n_i = n_{i+1}$. For each $1 \leq j \leq n_i$, there exists some $1 \leq j' \leq n_{i+1}$ such that $u_{ij} = \alpha_{i+1} u_{i+1, j'}$ and $r_{ij} = r_{i+1, j'}$.

  By induction, we have that
  \[
    u_{0j} = \alpha_1 \alpha_2 \cdots \alpha_l \cdots = p
  \]
  for each $1 \leq j \leq n_0$ and hence $n_0 = 1$.
  It follows that $[p] = [q]$ since $u_{0j} \in [q]$.

  Let $r = r_{01}$. Then for each $i \geq 0$, we have
  \begin{equation}\label{eq:g(a_i)'}
    g (a_i) (\alpha_{i+1} \alpha_{i+2} \cdots \alpha_l \cdots)
    = r \alpha_{i+1} \alpha_{i+2} \cdots \alpha_l \cdots.
  \end{equation}

  For any vertex $a$ and any right infinite path $p'$ in $[p]_{a}$, we may assume $p' = w \alpha_{i+1} \alpha_{i+2} \cdots \alpha_l \cdots$ for some $i \geq 0$ and some finite path $w \in Q(a_i, a)$.
  By (\ref{eq:g(a)}) and (\ref{eq:g(a_i)'}), we have that
  \[\begin{split}
    g (a) (p')
    &= ( X_{[p]} (w) \circ g (a_i) )
       (\alpha_{i+1} \alpha_{i+2} \cdots \alpha_l \cdots)\\
    &= X_{[p]} (w)
       (r \alpha_{i+1} \alpha_{i+2} \cdots \alpha_l \cdots)\\
    &= r p'.
  \end{split}\]
  It follows that $g = r \1_{X_{[p]}}$.
\end{proof}

We mention that Lemma~\ref{lem:pairwise} still holds even if $Q$ contains oriented cycles.
The following result is a direct consequence of Lemma~\ref{lem:pairwise}.
It implies that the representations of the form $X_{[p]}$ are indecomposable and pairwise non-isomorphic.

\begin{proposition}\label{prop:indec}
  Let $[p]$ and $[q]$ be equivalence classes of right infinite paths.
  \begin{enumerate}
    \item \label{item:prop:indec:1}
      $X_{[p]} \simeq X_{[q]}$ if and only if $[p] = [q]$.
    \item \label{item:prop:indec:2}
      $\End(X_{[p]}) \simeq k$.
    \qed
  \end{enumerate}
\end{proposition}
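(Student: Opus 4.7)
The plan is to derive both parts of the statement as immediate corollaries of Lemma~\ref{lem:pairwise}, which already characterizes every nonzero morphism between two representations of the form $X_{[p]}$ and $X_{[q]}$.

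For part~(\ref{item:prop:indec:1}), the ``if'' direction is trivial: the construction of $X_{[p]}$ depends only on the equivalence class $[p]$, so $[p]=[q]$ yields $X_{[p]} = X_{[q]}$ on the nose. For the ``only if'' direction, I would take any isomorphism $g \colon X_{[p]} \to X_{[q]}$; since $g$ is in particular a nonzero morphism, Lemma~\ref{lem:pairwise} immediately forces $[p]=[q]$.

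For part~(\ref{item:prop:indec:2}), I would apply Lemma~\ref{lem:pairwise} in the case $[q]=[p]$. It tells us that every nonzero endomorphism of $X_{[p]}$ has the form $r\1_{X_{[p]}}$ with $0 \neq r \in k$. Together with the zero morphism, this exhausts $\End(X_{[p]})$, so the $k$-algebra homomorphism $k \to \End(X_{[p]})$, $r \mapsto r\1_{X_{[p]}}$, is surjective; it is also injective since $\1_{X_{[p]}} \neq 0$. Hence it is a $k$-algebra isomorphism. In particular, $\End(X_{[p]})$ is local, which fits with the intended role of $X_{[p]}$ as an indecomposable object.

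There is no real obstacle here: the heavy analysis — tracking where $g(a_i)$ sends the tail $\alpha_{i+1}\alpha_{i+2}\cdots$ and showing by induction on $i$ that the coefficients collapse to a single scalar — has already been carried out inside the proof of Lemma~\ref{lem:pairwise}. The only item worth a brief verification is that $r \mapsto r\1_{X_{[p]}}$ respects multiplication, which is immediate from $\1_{X_{[p]}} \circ \1_{X_{[p]}} = \1_{X_{[p]}}$ and the fact that scalars commute with morphisms in a $k$-linear category.
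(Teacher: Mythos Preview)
Your proposal is correct and matches the paper's approach exactly: the paper states that the proposition is a direct consequence of Lemma~\ref{lem:pairwise} and marks it with a \qed, i.e., it gives no further argument beyond what you have spelled out. Your added remarks on the injectivity and multiplicativity of $r \mapsto r\1_{X_{[p]}}$ are harmless elaborations of this same one-line deduction.
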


We mention the following characterization for $X_{[p]}$ lying in $\rep(Q)$.

\begin{lemma}\label{lem:Xp-pf}
  Let $[p]$ be an equivalence class of right infinite paths. Then $X_{[p]}$ lies in $\rep(Q)$ if and only if the convex hull of any right infinite path in $[p]$ is uniformly interval finite.
\end{lemma}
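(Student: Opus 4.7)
The plan is to translate the condition $X_{[p]} \in \rep(Q)$ directly into a cardinality statement about the sets $[p]_a$, and then apply Proposition~\ref{prop:uif} to each right infinite path in $[p]$.

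First, by the very definition of $X_{[p]}$, the representation is pointwise finite dimensional precisely when $X_{[p]}(a) = \bigoplus_{u \in [p]_a} ku$ is finite dimensional for every vertex $a$, i.e., precisely when $[p]_a$ is finite for every $a$. Vertices with $[p]_a = \emptyset$ contribute $X_{[p]}(a) = 0$ automatically, so the only vertices that matter are those of the form $a = t(p')$ for some $p' \in [p]$. The key further observation is that the subclass at a vertex depends only on the equivalence class: since $[p'] = [p]$ we have $[p']_a = [p]_a$ for every $p' \in [p]$ and every $a$, and in particular $[p]_{t(p')} = [p']_{t(p')}$. Because $Q$ is assumed from the start of Section~\ref{sec:Xp-Yp} to have no oriented cycles, every right infinite path is irrational, so Proposition~\ref{prop:uif} applies to each $p' \in [p]$ and equates finiteness of $[p']_{t(p')}$ with uniform interval finiteness of the convex hull of $p'$.

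The two directions then fall out immediately. For the "if" direction, assume each $p' \in [p]$ has uniformly interval finite convex hull, and fix any vertex $a$: either $[p]_a = \emptyset$ and $X_{[p]}(a) = 0$, or some $p' \in [p]$ satisfies $t(p') = a$, in which case $[p]_a = [p']_{t(p')}$ is finite by Proposition~\ref{prop:uif}, so $X_{[p]}(a)$ is finite dimensional. Conversely, if $X_{[p]}$ is pointwise finite dimensional, then for any $p' \in [p]$ the set $[p']_{t(p')} = [p]_{t(p')}$ is finite, whence Proposition~\ref{prop:uif} gives that the convex hull of $p'$ is uniformly interval finite.

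I do not foresee any substantial obstacle in this argument; the proof is essentially a bookkeeping reduction to Proposition~\ref{prop:uif} applied to every representative of $[p]$. The only point worth keeping in mind is that uniform interval finiteness must be checked for \emph{every} representative and not just for a single $p'$, since (as illustrated by the first example following Lemma~\ref{lem:uif}) an equivalence class can mix representatives whose convex hulls have very different sizes; the set $[p]_a$, however, is large enough to see all of them at once, which is exactly what makes the equivalence work.
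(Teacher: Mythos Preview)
Your proof is correct and follows essentially the same approach as the paper: reduce $X_{[p]}\in\rep(Q)$ to finiteness of each $[p]_a$, note that $Q$ having no oriented cycles makes every right infinite path irrational, and invoke the equivalence (1)$\Leftrightarrow$(3) of Proposition~\ref{prop:uif}. The paper's proof is terser and leaves implicit your observation that one must apply Proposition~\ref{prop:uif} to a representative $p'\in[p]$ with $t(p')=a$, but the underlying argument is the same.
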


\begin{proof}
  We observe that $X_{[p]}$ lies in $\rep(Q)$ if and only if $[p]_a$ is finite for any $a \in Q_0$. Since $Q$ contains no oriented cycles, every right infinite path is irrational. Then the result follows from the equivalence between Proposition~\ref{prop:uif}(1) and Proposition~\ref{prop:uif}(3).
\end{proof}

We mention that the convex hull of any right infinite path in $[p]$ is a convex subquiver of $\supp X_{[p]}$. But $X_{[p]} \in \rep(Q)$ does not imply that $\supp X_{[p]}$ is uniformly interval finite; see the following example.

\begin{example}
  Let $Q$ be the following quiver
  \[\begin{tikzcd}
    \cdots
    &\underset{-2}{\circ} \lar[yshift=.3em] \lar[yshift=-.3em]
    &\underset{-1}{\circ} \lar[yshift=.3em] \lar[yshift=-.3em]
    &\underset{ 0}{\circ} \lar[yshift=.3em] \lar[yshift=-.3em]
    &\underset{ 1}{\circ} \lar["\alpha_1"']
    &\underset{ 2}{\circ} \lar["\alpha_2"']
    &\cdots.              \lar["\alpha_3"']
  \end{tikzcd}\]
  Let $p = \alpha_1 \alpha_2 \cdots \alpha_i \cdots$. Then $\supp X_{[p]} = Q$. We observe that $\abs{[p]_i} = 1$ for each $i \geq 0$ and $\abs{Q(0,i)} = \abs{[p]_i} = 2^{-i}$ for each $i < 0$. It follows that $X_{[p]} \in \rep(Q)$. But $\supp X_{[p]}$ is not uniformly interval finite.
  \qed
\end{example}

Let $[p]$ be an equivalence class of left infinite paths.
Then $[p^\op]$ is the equivalence class of right infinite paths in $Q^\op$.
Assume $p = \cdots \alpha_i \cdots \alpha_2 \alpha_1$. Set $a_i = s(\alpha_{i+1})$ for each $i \geq 0$. Then $p^\op = \alpha_1^\op \alpha_2^\op \cdots \alpha_i^\op \cdots$ and $a_i = t(\alpha_{i+1}^\op)$ for each $i \geq 0$.
We have a direct system $(P_{a_i}^\op)$ over $\N$ with the direct limit $X_{[p^\op]}$ in $\Rep(Q^\op)$. Applying the contravariant functor $D \colon \Rep(Q^\op) \to \Rep(Q)$, we obtain an inverse system $(I_{a_i})$ over $\N$ in $\Rep(Q)$.
Let $Y_{[p]} = D X_{[p^\op]}$ in $\Rep(Q)$.

We observe that $D$ sends direct limits to inverse limits.
Then we obtain the following result, which is dual to Lemma~\ref{lem:Xp-flat}.

\begin{lemma}\label{lem:Yp-flat'}
  $Y_{[p]}$ is the inverse limit of $(I_{a_i})$ in $\Rep(Q)$.
  \qed
\end{lemma}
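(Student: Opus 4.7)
The plan is to derive the lemma as a direct dualization of Lemma~\ref{lem:Xp-flat} applied to $Q^\op$. The key observation is that the contravariant functor $D$ sends direct limits to inverse limits, so once we know $X_{[p^\op]} = \varinjlim P_{a_i}^\op$ in $\Rep(Q^\op)$, applying $D$ gives $Y_{[p]} = \varprojlim I_{a_i}$ in $\Rep(Q)$ with the structure morphisms $D\phi_i^\op \colon Y_{[p]} \to I_{a_i}$.

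First I would observe that both direct limits and inverse limits in $\Rep(Q)$ (and $\Rep(Q^\op)$) are computed pointwise on the vertices: a compatible family of maps out of (resp.\ into) a representation is determined vertexwise, and the canonical candidate built pointwise automatically assembles into a representation because all transition maps in our system are morphisms of representations. Thus the claim reduces to the vertexwise statement that, for each vertex $a$, the space $Y_{[p]}(a) = \Hom_k(X_{[p^\op]}(a), k)$ is the inverse limit of the finite-dimensional spaces $I_{a_i}(a) = \Hom_k(P_{a_i}^\op(a), k)$ under the duals of the structure maps.

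Next I would invoke the classical adjunction $\Hom_k(\varinjlim V_i, k) \simeq \varprojlim \Hom_k(V_i, k)$, valid for any direct system $(V_i)$ of $k$-linear spaces over any directed set. By Lemma~\ref{lem:Xp-flat} applied in $Q^\op$, the direct system $(P_{a_i}^\op, \psi_{ij}^\op)$ has direct limit $X_{[p^\op]}$ with canonical maps $\phi_i^\op$. Evaluating at the vertex $a$ gives $X_{[p^\op]}(a) = \varinjlim P_{a_i}^\op(a)$ in $\Mod k$, so the displayed adjunction yields $Y_{[p]}(a) \simeq \varprojlim I_{a_i}(a)$, and the isomorphism is naturally induced by the morphisms $D\phi_i^\op$. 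Packaging these isomorphisms over all vertices $a$ produces the required universal property for $Y_{[p]}$ as an inverse limit in $\Rep(Q)$.

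There is no real obstacle here; the argument is entirely formal. The only point to be careful about is compatibility with the arrow actions: one must verify that the pointwise isomorphism $Y_{[p]}(a) \simeq \varprojlim I_{a_i}(a)$ intertwines the structure maps $Y_{[p]}(\alpha)$ and the maps induced on limits by $I_{a_i}(\alpha)$. This is a direct diagram chase using that $(D\phi_i^\op)(a)$ is natural in $a$ (and in $i$), so it poses no difficulty. Hence the lemma follows at once from Lemma~\ref{lem:Xp-flat} and the contravariant behavior of $D$.
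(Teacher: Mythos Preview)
Your proposal is correct and follows exactly the approach the paper itself indicates: the paper states the lemma with an immediate \qed, having noted just before it that $D$ sends direct limits to inverse limits and that the result is dual to Lemma~\ref{lem:Xp-flat}. You have simply unpacked that one-line observation into its pointwise justification, which is entirely fine.
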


We mention that $Y_{[p]} \not\simeq I_a$ for any vertex $a$.
Indeed, we observe that $\soc Y_{[p]} = 0$. Then the result follows since $\soc I_a \simeq S_a$.

\section{The classification theorem}
\label{sec:proj}

Let $Q$ be an interval finite quiver. Then $\proj(Q)$ and $\inj(Q)$ are both subcategories of $\rep(Q)$.
We refer to \cite{BautistaLiuPaquette2013Representation} for more details about $\rep(Q)$.

\begin{lemma}\label{lem:flat-proj}
  Let $M \in \rep(Q)$ and let $\Lambda$ be a countable directed set. Assume that $M$ is a direct limit of objects $M_i \in \proj(Q)$ over $\Lambda$ in $\rep(Q)$. Then $M$ is projective in $\rep(Q)$.
\end{lemma}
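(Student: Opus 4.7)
The plan is to verify the lifting property for projectivity directly by an application of Proposition~\ref{prop:ft}. Suppose $g\colon U\to V$ is an epimorphism in $\rep(Q)$ and $h\colon M\to V$ is an arbitrary morphism; I need to produce a lift $M\to U$. Write $(M,\phi_i\colon M_i\to M)$ for the given direct limit in $\rep(Q)$.

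First I would check that each composite $h\circ\phi_i\colon M_i\to V$ factors through $g$. Since $Q$ is interval finite, each $P_a$ lies in $\rep(Q)$ and, by Lemma~\ref{lem:proj-inj}(1), is projective there, as it represents the exact evaluation functor $N\mapsto N(a)$. Every $M_i\in\proj(Q)$ is of the form $\bigoplus_{r=1}^{n_i}P_{a_{i,r}}$, hence projective in $\rep(Q)$, so the desired factorization of $h\circ\phi_i$ through $g$ exists.

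Next I would verify the finite-dimensionality hypothesis of Proposition~\ref{prop:ft}. Using Lemma~\ref{lem:proj-inj}(1) once more,
\[
  \Hom(M_i,U)\simeq\bigoplus_{r=1}^{n_i}\Hom(P_{a_{i,r}},U)\simeq\bigoplus_{r=1}^{n_i}U(a_{i,r}).
\]
Since $U\in\rep(Q)$ is pointwise finite dimensional, each summand $U(a_{i,r})$ is finite dimensional, hence so is $\Hom(M_i,U)$ for every $i\in\Lambda$.

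Finally I would invoke Proposition~\ref{prop:ft} with $\mathcal{C}=\rep(Q)$ and the given countable directed set $\Lambda$ to conclude that $h$ factors through $g$, producing the required lift; this shows $M$ is projective in $\rep(Q)$. There is no real obstacle here beyond recognizing that Proposition~\ref{prop:ft} was designed precisely for this application: the countability of $\Lambda$ together with the pointwise finite dimensionality of $U$ supplies the Mittag--Leffler input on $(\Ker\Hom(M_i,g))$ that made Proposition~\ref{prop:ft} go through, so the present lemma is essentially a direct corollary.
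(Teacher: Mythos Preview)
Your proof is correct and follows essentially the same approach as the paper's: verify the lifting property by applying Proposition~\ref{prop:ft}, using that each $M_i\in\proj(Q)$ is projective in $\rep(Q)$ so $h\circ\phi_i$ factors through $g$, and that $\Hom(M_i,U)$ is finite dimensional since $U$ is pointwise finite dimensional. You give a bit more detail (the explicit decomposition of $M_i$ and the computation of $\Hom(M_i,U)$ via Lemma~\ref{lem:proj-inj}), but the argument is the same.
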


\begin{proof}
  Assume that $\phi_i \colon M_i \to M$ is the canonical morphism for each $i \in \Lambda$.
  Let $g \colon X \to Y$ be an epimorphism and let $f \colon M \to Y$ be a morphism in $\rep(Q)$. We have that each $f \circ \phi_i$ factors through $g$ since $M_i$ is projective. We observe that $\Hom(M_i, X)$ is finite dimensional. By Proposition~\ref{prop:ft}, we have that $f$ factors through $g$. It follows that $M$ is projective in $\rep(Q)$.
\end{proof}

Then we can show that $X_{[p]}$ is projective in $\rep(Q)$ if it lies in $\rep(Q)$.

\begin{proposition}\label{prop:Xp-proj}
  Let $[p]$ be an equivalence class of right infinite paths such that the convex hull of any right infinite path in $[p]$ is uniformly interval finite. Then $X_{[p]}$ is an indecomposable projective object in $\rep(Q)$.
\end{proposition}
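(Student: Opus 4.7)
The plan is to assemble three results already in hand: Lemma~\ref{lem:Xp-flat} (which exhibits $X_{[p]}$ as a direct limit of the system $(P_{a_i},\psi_{ij})$ over $\N$), Lemma~\ref{lem:Xp-pf} (which, under the uniform interval finiteness hypothesis, guarantees $X_{[p]}\in\rep(Q)$), and Lemma~\ref{lem:flat-proj} (the key projectivity criterion for countable direct limits of objects in $\proj(Q)$). Indecomposability is then a direct consequence of Proposition~\ref{prop:indec}.

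First I would write $p=\alpha_1\alpha_2\cdots\alpha_i\cdots$ and set $a_i=t(\alpha_{i+1})$ so that Lemma~\ref{lem:Xp-flat} identifies $X_{[p]}$, together with the structure maps $\phi_i\colon P_{a_i}\to X_{[p]}$, as the direct limit of $(P_{a_i},\psi_{ij})$ in $\Rep(Q)$ over the countable directed set $(\N,\leq)$. Since $Q$ is interval finite, each $P_{a_i}$ is pointwise finite dimensional and lies in $\proj(Q)$.

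Next I would promote this direct limit from $\Rep(Q)$ to $\rep(Q)$. By hypothesis and Lemma~\ref{lem:Xp-pf}, $X_{[p]}$ lies in $\rep(Q)$. Since $\rep(Q)$ is a full subcategory of $\Rep(Q)$, and the direct limit computed in $\Rep(Q)$ already lies in $\rep(Q)$, the universal property of the direct limit is inherited: for any $N\in\rep(Q)$, a compatible family $(f_i\colon P_{a_i}\to N)$ factors uniquely through $X_{[p]}$. Hence $X_{[p]}$ is the direct limit of $(P_{a_i},\psi_{ij})$ in $\rep(Q)$ as well.

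With the limit description inside $\rep(Q)$ in place, Lemma~\ref{lem:flat-proj} applies verbatim (with $\Lambda=\N$ and $M_i=P_{a_i}\in\proj(Q)$), yielding that $X_{[p]}$ is projective in $\rep(Q)$. Finally, Proposition~\ref{prop:indec}(\ref{item:prop:indec:2}) gives $\End(X_{[p]})\simeq k$, which is local, so $X_{[p]}$ is indecomposable. There is no serious obstacle here; the only subtle point is the passage from a direct limit in $\Rep(Q)$ to one in $\rep(Q)$, which is automatic once we know the candidate limit object lies in $\rep(Q)$ and $\rep(Q)$ is a full subcategory. Everything else is a direct application of the preceding lemmas.
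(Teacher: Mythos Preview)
Your proof is correct and follows essentially the same approach as the paper: invoke Lemma~\ref{lem:Xp-pf} and Proposition~\ref{prop:indec}(\ref{item:prop:indec:2}) for indecomposability in $\rep(Q)$, then combine Lemma~\ref{lem:Xp-flat} with Lemma~\ref{lem:flat-proj} for projectivity. You are in fact slightly more careful than the paper in spelling out why the direct limit computed in $\Rep(Q)$ also serves as a direct limit in the full subcategory $\rep(Q)$, which is exactly what Lemma~\ref{lem:flat-proj} requires.
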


\begin{proof}
  By Lemma~\ref{lem:Xp-pf} and Propositions~\ref{prop:indec}(2), we have that $X_{[p]}$ is an indecomposable object in $\rep(Q)$. By Lemma~\ref{lem:Xp-flat}, $X_{[p]}$ is a direct limit of objects lying in $\proj(Q)$ over $\N$ in $\Rep(Q)$. Then the result follows from Lemma~\ref{lem:flat-proj}.
\end{proof}

The following result shows that the convex hull of any right infinite path in $\supp M$ is uniformly interval finite, if $M$ is a projective object in $\rep(Q)$.

\begin{lemma}\label{lem:not-proj}
  Let $M$ be a representation in $\rep(Q)$. Assume that $\supp M$ contains a right infinite path $p$ whose convex hull is not uniformly interval finite. Then $M$ is not projective in $\rep(Q)$.
\end{lemma}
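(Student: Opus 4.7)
The plan is to proceed by contradiction. Suppose $M$ is projective in $\rep(Q)$; I will construct an epimorphism $g \colon X \to Y$ and a morphism $f \colon M \to Y$ in $\rep(Q)$ for which $f$ admits no lift, thus contradicting projectivity.

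First I set notation. Let $a_i = t(\alpha_{i+1})$ for $i \geq 0$, so that by Proposition~\ref{prop:uif} the non-UIF hypothesis on the convex hull of $p$ is equivalent to $\sup_{i \geq 0} \abs{Q(a_i, a_0)} = \infty$, and equivalently to $\abs{[p]_{a_0}}$ being infinite. Because $p \subseteq \supp M$, each $M(a_i)$ is nonzero and pointwise finite dimensional; let $d_i = \dim M(a_i)$. The contrast between finite $d_i$ and unbounded $\abs{Q(a_i,a_0)}$ is the source of the obstruction to lifting.

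For the target I take $Y = I_{a_0}$, which lies in $\rep(Q)$ by interval finiteness. A nonzero linear form $\ell \in \Hom_k(M(a_0), k)$ yields, via Lemma~\ref{lem:proj-inj}(2), a morphism $f \colon M \to I_{a_0}$ with $f(b)(x)(u) = \ell(M(u)(x))$ for $x \in M(b)$ and $u \in Q(b, a_0)$. For the source $X$, I assemble finitely generated projectives using the direct system $(P_{a_i}, \psi_{ij})$ of Section~\ref{sec:Xp-Yp}, whose direct limit is $X_{[p]} \in \Rep(Q)$ but not in $\rep(Q)$ by Lemma~\ref{lem:Xp-pf}. The plan is to choose $X$ so that each $X(a_i)$ is of dimension bounded by something like $\abs{Q(a_i, a_0)}$, ensuring $X \in \rep(Q)$ while $g \colon X \to I_{a_0}$ remains surjective. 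The natural candidate is to take $X$ built from subrepresentations of the form $P_{a_i}$ mapping to the length-bounded filtration $V_n \subseteq I_{a_0}$ given by $V_n(b) = \set{\phi \in I_{a_0}(b) \,\middle\vert \phi(u) = 0 \text{ for } |u| > n}$, which is in $\rep(Q)$ and whose union is $I_{a_0}$.

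The crux is to show that any lift $\tilde f \colon M \to X$ would produce, via Lemma~\ref{lem:proj-inj}(1), a coherent system of elements $\tilde f(a_i) \colon M(a_i) \to X(a_i)$ compatible with all composable arrows in $p$, and that applying $g$ reconstructs $f$; tracing this through, $\tilde f$ determines (and is essentially determined by) a sequence of threads that must realize a coherent preimage of $f(a_i)$ in an ambient space of dimension eventually at least $\abs{Q(a_i, a_0)}$. Since $\dim M(a_i) = d_i$ is fixed-finite while $\abs{Q(a_i, a_0)}$ is unbounded in $i$, no such coherent lift can exist. The argument is likely to be formalized via Proposition~\ref{prop:ft} applied to the direct system $(P_{a_i})$, together with a Mittag-Leffler-type analysis of the inverse system $\Hom(P_{a_i}, X) \simeq X(a_i)$, forcing a contradiction precisely in the non-UIF regime.

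The main obstacle is carrying out the construction of $X$ and $g$ with enough control that the lifting problem really reduces to this dimension mismatch, while ensuring $X$ remains in $\rep(Q)$. Getting the combinatorics of the path counts right, and matching them against the finite dimensional constraints coming from $M \in \rep(Q)$, is the delicate step; everything else is formal.
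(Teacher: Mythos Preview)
Your proposal is a plan rather than a proof, and the plan has a genuine gap: you never construct the representation $X$ or the epimorphism $g$, and you do not prove that no lift exists. You acknowledge this yourself (``the main obstacle is carrying out the construction of $X$ and $g$''), but that construction is the entire content of the argument you are proposing. Moreover, your suggestion to invoke Proposition~\ref{prop:ft} is misdirected: that proposition concerns a morphism \emph{out of} a direct limit, and the direct limit of $(P_{a_i}, \psi_{ij})$ is $X_{[p]}$, not $M$. There is no reason $M$ should be such a direct limit, so it is unclear how that proposition would constrain lifts of $f \colon M \to I_{a_0}$.

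The paper's proof sidesteps all of this with a five-line argument exploiting hereditariness directly. Since $a_i \in \supp M$, there is a nonzero morphism $f_i \colon P_{a_i} \to M$ by Lemma~\ref{lem:proj-inj}(1). Because $M$ is projective and $\rep(Q)$ is hereditary, $\Im f_i$ is projective, so $P_{a_i} \twoheadrightarrow \Im f_i$ splits; as $P_{a_i}$ is indecomposable this forces $f_i$ to be injective. Hence $\dim M(a_0) \geq \dim P_{a_i}(a_0) = \abs{Q(a_i, a_0)}$ for every $i$, contradicting $\dim M(a_0) < \infty$ once $\set{\abs{Q(a_i, a_0)}}$ is unbounded (Proposition~\ref{prop:uif}). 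The dimension mismatch you correctly identified is realized by embedding $P_{a_i}$ \emph{into} $M$, not by building a lifting problem mapping $M$ out; this is both shorter and avoids the delicate construction you left open.
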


\begin{proof}
  Assume $p = \alpha_1 \alpha_2 \cdots \alpha_i \cdots$. For each $i\geq 0$, we set $a_i = t(\alpha_{i+1})$.
  Assume that $M$ is projective in $\rep(Q)$.

  We observe that $\Hom(P_{a_i},M) \simeq M(a_i)$ and hence it is nonzero.
  Let $f_i\colon P_{a_i}\to M$ be a nonzero morphism. Since $M$ is projective in $\rep(Q)$ which is hereditary, we have that $f_i$ is an injection. Then
  \(
    \dim M(a_0)\geq \dim P_{a_i}(a_0) = \abs{Q(a_i, a_0)}
  \).

  Since $Q$ contains no oriented cycles, every right infinite path is irrational. Then the equivalence between Proposition~\ref{prop:uif}(1) and Proposition~\ref{prop:uif}(2) implies that $\set{\abs{Q(a_i, a_0)} \,\middle\vert i\geq 0}$ is unbounded. It follows that $M(a_0)$ is not finite dimensional, which is a contradiction. Then the result follows.
\end{proof}

We mention the following observations about projective objects in $\rep(Q)$.

\begin{lemma}\label{lem:M(alpha)}
  Let $M$ be a projective object in $\rep(Q)$. Then $M(\alpha)$ is an injection for any arrow $\alpha \colon a \to b$.
\end{lemma}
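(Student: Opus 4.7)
The plan is to prove the contrapositive via a duality argument, using Lemma~\ref{lem:proj-inj}(2), which identifies $\Hom(M, I_c)$ naturally with $\Hom_k(M(c), k) = M(c)^\ast$. The key idea is to construct an epimorphism $u\colon I_b \to I_a$ in $\rep(Q)$ associated to the arrow $\alpha$, and show that the surjection on Hom groups it induces (via the projectivity of $M$) corresponds under $\zeta_M$ to the transpose $M(\alpha)^\ast$; since $M(a), M(b)$ are finite dimensional, this forces $M(\alpha)$ to be injective.

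To build $u$: applying Lemma~\ref{lem:proj-inj}(2) to $I_b$ in the role of $M$, the functional on $I_b(a)$ evaluating at the basis element $\alpha \in Q(a,b)$ corresponds to a unique morphism $u \in \Hom(I_b, I_a)$. In terms of the path-dual bases $\{p^\ast : p\in Q(c, b)\}$ of $I_b(c)$ and $\{q^\ast : q \in Q(c, a)\}$ of $I_a(c)$, the map $u(c)$ sends $p^\ast$ to $q^\ast$ when $p$ factors as $\alpha q$ for some $q \in Q(c, a)$, and to $0$ otherwise. Since every $q \in Q(c, a)$ extends to $\alpha q \in Q(c, b)$, each $u(c)$ is surjective, so $u$ is an epimorphism in $\rep(Q)$ (note that $I_a, I_b$ lie in $\rep(Q)$ because $Q$ is interval finite).

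Since $M$ is projective, $\Hom(M, -)$ is exact on $u$, yielding a surjection $\Hom(M, u)\colon \Hom(M, I_b) \twoheadrightarrow \Hom(M, I_a)$. A direct unwinding of the formulas in the proof of Lemma~\ref{lem:proj-inj}(2) shows that, under the isomorphisms $\zeta_M$, this surjection is identified with $M(\alpha)^\ast\colon M(b)^\ast \to M(a)^\ast$: for $g\colon M \to I_b$ with $\psi = \zeta_M(g)$ and $z\in M(a)$, one computes $\zeta_M(u \circ g)(z) = u(a)(g(a)(z))(e_a)$, which equals the coefficient of $\alpha^\ast$ in $g(a)(z)$, namely $\psi(M(\alpha)(z)) = M(\alpha)^\ast(\psi)(z)$. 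Thus $M(\alpha)^\ast$ is surjective, and since $M(a)$ and $M(b)$ are finite dimensional, $M(\alpha)$ is injective. The main conceptual step is pinning down the right epimorphism $u$ associated to $\alpha$; the subsequent identification of $\Hom(M, u)$ with $M(\alpha)^\ast$ is the real heart of the argument, though it reduces to routine bookkeeping once $u$ is in hand.
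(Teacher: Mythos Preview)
Your argument is correct, but it takes a genuinely different route from the paper's proof. The paper argues on the projective side: choosing $n=\dim M(a)$ and a morphism $f\colon P_a^{\oplus n}\to M$ with $f(a)$ surjective (hence bijective), it uses that $\rep(Q)$ is hereditary and $M$ projective to conclude that $\Im f$ is projective, so the surjection $P_a^{\oplus n}\to\Im f$ splits; since $(\Ker f)(a)=0$ and any direct summand of $P_a^{\oplus n}$ is again a power of $P_a$, this forces $\Ker f=0$. Injectivity of $M(\alpha)$ then follows from the commuting square with $P_a^{\oplus n}(\alpha)$, which is visibly injective.

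By contrast, you work on the injective side: you manufacture an epimorphism $u\colon I_b\to I_a$ from the arrow $\alpha$ and identify $\Hom(M,u)$ with $M(\alpha)^\ast$ via $\zeta_M$, so projectivity of $M$ gives surjectivity of $M(\alpha)^\ast$ and hence injectivity of $M(\alpha)$. Your approach uses only the lifting property of projectives and Lemma~\ref{lem:proj-inj}(2), avoiding any appeal to the hereditary property; the price is the bookkeeping needed to build $u$ and verify the naturality square. The paper's proof is shorter and exploits the hereditary structure of $\rep(Q)$ together with the Krull--Schmidt behaviour of $\proj(Q)$, while yours would transplant more readily to settings where those structural facts are unavailable but the representing injectives $I_a$ still exist. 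One minor remark: you invoke finite-dimensionality of $M(a),M(b)$ to pass from surjectivity of $M(\alpha)^\ast$ to injectivity of $M(\alpha)$, but in fact surjectivity of the transpose already implies injectivity in general, so that hypothesis is not needed at that step.
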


\begin{proof}
  We observe that there exists some morphism $f \colon P_a^{\oplus \dim M(a)} \to M$ such that $\Im f(a) = M(a)$. Since $\rep(Q)$ is hereditary and $M$ is projective, then $f$ is an injection. We observe that $P_a(\alpha)$ is an injection. Then the result follows.
\end{proof}

For any vertex $a$ and $x \in M(a)$, we denote by $M_x$ the subrepresentation of $M$ generated by $x$.

\begin{lemma}\label{lem:sum}
  Let $M$ be a projective object in $\rep(Q)$.
  Assume $a_1, a_2, \dots, a_n$ are vertices in $\supp M$, and $x_i \in M(a_i)$ is nonzero for each $1\leq i \leq n$ such that $x_i \notin \sum_{j \neq i} M_{x_j} (a_i)$.
  Then $\sum_{i=1}^n M_{x_i}$ is an internal direct sum.
\end{lemma}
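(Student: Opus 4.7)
The plan is to build the morphism $g = (f_1, \ldots, f_n) \colon \bigoplus_{i=1}^{n} P_{a_i} \to M$, where each $f_i \colon P_{a_i} \to M$ is the morphism associated to $x_i \in M(a_i)$ by Lemma~\ref{lem:proj-inj}(1), so that $\Im f_i = M_{x_i}$. Since $M$ is projective in the hereditary category $\rep(Q)$ (Lemma~\ref{lem:here}), every subrepresentation of $M$ in $\rep(Q)$ is again projective, so the short exact sequence $0 \to \Ker f_i \to P_{a_i} \to M_{x_i} \to 0$ splits. Because $Q$ is interval finite it contains no oriented cycles, so $\End(P_{a_i}) \simeq k$ is local and $P_{a_i}$ is indecomposable; together with $x_i \neq 0$ this forces $\Ker f_i = 0$, so $f_i$ is an isomorphism $P_{a_i} \simeq M_{x_i}$. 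Consequently it suffices to prove that $g$ is injective, for then the identification $\bigoplus_{i=1}^{n} P_{a_i} \simeq \bigoplus_{i=1}^n M_{x_i} \to M$ shows that $\sum_{i=1}^n M_{x_i}$ is an internal direct sum.

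I would argue by contradiction, assuming $K := \Ker g \neq 0$. The same hereditary--projective argument splits $0 \to K \to \bigoplus P_{a_i} \to \Im g \to 0$, so $K$ is a direct summand of $\bigoplus_{i=1}^{n} P_{a_i}$; by Krull--Schmidt (Lemma~\ref{lem:dec}), $K \simeq \bigoplus_{j \in S} P_{a_j}$ for some nonempty subset of indices $S \subseteq \{1, \ldots, n\}$. Writing the inclusion $\iota \colon K \hookrightarrow \bigoplus P_{a_i}$ as a matrix of morphisms $\iota_{j,i} \colon P_{a_j} \to P_{a_i}$ determined by $\iota_{j,i}(e_{a_j}) = \sum_{p \in Q(a_i, a_j)} c_{j,i,p}\, p$, the relation $g \circ \iota = 0$ translates, for each $j \in S$, into the identity
\[
  \sum_{i=1}^{n} \sum_{p \in Q(a_i, a_j)} c_{j,i,p}\, M(p)(x_i) = 0
\]
in $M(a_j)$.

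The step I expect to be the main obstacle, because of possible repetitions among the $a_i$, is showing that $\iota_{j,i} = 0$ whenever $a_i = a_j$. Setting $T_j = \{i : a_i = a_j\}$ and using the absence of oriented cycles (which collapses $Q(a_i, a_j)$ to $\{e_{a_j}\}$ for $i \in T_j$ and to paths of length $\geq 1$ otherwise), the hypothesis on the $x_i$'s implies that the images of $x_i$ for $i \in T_j$ are linearly independent in the quotient $M(a_j) / \sum_{l \notin T_j} M_{x_l}(a_j)$. Reducing the above identity modulo this subspace will then force $c_{j,i,e_{a_j}} = 0$ for every $i \in T_j$, and hence $\iota_{j,i} = 0$ whenever $a_i = a_j$.

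To conclude, let $\pi \colon \bigoplus P_{a_i} \to K$ be the retraction of $\iota$, so that $\pi \circ \iota = \1_K$ and in particular $(\pi \circ \iota)_{j,j} = \1_{P_{a_j}}$. Each composite $\pi_{i,j} \circ \iota_{j,i}$ lies in $\End(P_{a_j}) \simeq k$; the case $a_i = a_j$ vanishes by the previous step, while the case $a_i \neq a_j$ also vanishes by a direct Yoneda-style calculation, since any product $p r$ with $p \in Q(a_i, a_j)$ of length $\geq 1$ and $r \in Q(a_j, a_i)$ would be an oriented cycle at $a_j$, which cannot exist. Hence $(\pi \circ \iota)_{j,j} = 0$, contradicting $\1_{P_{a_j}} \neq 0$. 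Therefore $K = 0$, $g$ is injective, and $\sum_{i=1}^{n} M_{x_i}$ is an internal direct sum.
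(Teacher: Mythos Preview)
Your argument is correct and complete. Both you and the paper begin by forming the natural morphism from the external direct sum $\bigoplus_{i=1}^n P_{a_i}$ to the projective object, observe that hereditariness forces this to be a split epimorphism onto its image, and then must show it is a monomorphism. The paper does this last step more conceptually via the top functor: it shows that $\top\!\bigl(\sum_i M_{x_i}\bigr)$ already contains $\bigoplus_i S_{a_i}$ (using $(\rad \sum_j M_{x_j})(a_i) \subseteq \sum_{j\neq i} M_{x_j}(a_i)$ together with the hypothesis), hence the split epimorphism $g$ induces an isomorphism on tops, and since both source and target lie in $\proj(Q)$ this forces $g$ itself to be an isomorphism. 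Your route instead analyzes $\Ker g$ directly: Krull--Schmidt identifies it as a partial sub-sum, and an explicit matrix computation using the absence of oriented cycles yields the contradiction $(\pi\circ\iota)_{j,j}=0$. The linear-independence step you isolate (the images of the $x_i$ with $a_i=a_j$ modulo $\sum_{l\notin T_j} M_{x_l}(a_j)$) is precisely the content hidden inside the paper's ``$\bigoplus S_{a_i}$ is a direct summand of $\top(\sum M_{x_i})$'' assertion, so the two arguments pivot on the same observation. The paper's version is shorter and avoids the case analysis on $\pi_{i,j}\circ\iota_{j,i}$, while yours is more explicit and does not require the auxiliary fact that objects of $\proj(Q)$ with isomorphic tops are isomorphic.
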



\begin{proof}
  We mention that $M_{x_i} \simeq P_{a_i}$ for each $1 \leq i \leq n$. Consider the canonical epimorphism
  \[
    g \colon \bigoplus_{i=1}^n M_{x_i} \To \sum_{i=1}^n M_{x_i}.
  \]
  Here, the direct sum means external direct sum. We observe that $\sum_{i=1}^n M_{x_i}$ is projective in $\rep(Q)$, since $\rep(Q)$ is hereditary. Then $g$ is a split epimorphism.

  For each $1 \leq i \leq n$, we have that $( \rad M_{x_i} ) (a_i) = 0$. Then $( \rad\sum_{j=1}^n M_{x_j} ) (a_i)$ is contained in $\sum_{j \neq i} M_{x_j} (a_i)$.
  By the assumption, we have that $x_i$ does not lie in $( \rad\sum_{j=1}^n M_{x_j} ) (a_i)$.
  Then $\bigoplus_{i=1}^n S_{a_i}$ is a direct summand of $\top ( \sum_{i=1}^n M_{x_i} )$.
  We observe by the split epimorphism $g$ that $\top ( \sum_{i=1}^n M_{x_i} )$ is a direct summand of $\top ( \bigoplus_{i=1}^n M_{x_i} )$, which is isomorphic to $\bigoplus_{i=1}^n S_{a_i}$.
  It follows that
  \[
    \top \biggl( \sum_{i=1}^n M_{x_i} \biggr)
    \simeq \bigoplus_{i=1}^n S_{a_i}
    \simeq \top \biggl( \bigoplus_{i=1}^n M_{x_i} \biggr).
  \]
  Since both $\bigoplus_{i=1}^n M_{x_i}$ and $\sum_{i=1}^n M_{x_i}$ lie in $\proj(Q)$, then $g$ is an isomorphism. Then the result follows.
\end{proof}

\begin{proposition}\label{prop:proj}
  Let $M$ be an indecomposable projective object in $\rep(Q)$. Assume that $\supp M$ contains a right infinite path $p$. Then the convex hull of any right infinite path in $[p]$ is uniformly interval finite and $M\simeq X_{[p]}$.
\end{proposition}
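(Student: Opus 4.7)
Plan. The proof has three stages: (1) establish the UIF property for $[p]$ and use it to construct an injection $f \colon X_{[p]} \hookrightarrow M$; (2) show $\top M = 0$; (3) conclude $f$ is surjective.

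First, since $M(\beta)$ is an injection for every arrow $\beta$ (Lemma~\ref{lem:M(alpha)}), nonvanishing of $M$ propagates along finite paths; any $q \in [p]$ shares a common tail with $p$, and its remaining vertices lie on a finite prefix from some $a_{m-1}$ on $p$, so all vertices of $q$ lie in $\supp M$. The contrapositive of Lemma~\ref{lem:not-proj} yields that the convex hull of $q$ is uniformly interval finite, and Lemma~\ref{lem:Xp-pf} together with Proposition~\ref{prop:Xp-proj} make $X_{[p]}$ an indecomposable projective object of $\rep(Q)$. To produce $f$, note that the descending chain $\Im M(\alpha_1\cdots\alpha_i) \subseteq M(a_0)$ consists of nonzero subspaces of the finite-dimensional space $M(a_0)$ and hence stabilizes at a nonzero subspace $V$. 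Any nonzero $x_0 \in V$ lifts uniquely, via injectivity, to compatible $x_i \in M(a_i)$ with $M(\alpha_{i+1}\cdots\alpha_j)(x_j) = x_i$; by Lemma~\ref{lem:proj-inj}(1) these become morphisms $f_i \colon P_{a_i} \to M$ satisfying $f_j \circ \psi_{ij} = f_i$, and Lemma~\ref{lem:Xp-flat} gives $f \colon X_{[p]} \to M$. Indecomposability of $X_{[p]}$ combined with heredity splits the sequence $0 \to \ker f \to X_{[p]} \to \Im f \to 0$, forcing $\ker f = 0$.

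Next, I would show $\top M = 0$. Assume otherwise and pick $y \in M(b) \setminus (\rad M)(b)$, giving an injection $g \colon P_b \hookrightarrow M$ with $g(b)(e_b) = y$. The nonzero class of $y$ in $\top M$ induces $\pi \colon M \to S_b$ with $\pi \circ g$ equal to the canonical $\epsilon \colon P_b \twoheadrightarrow S_b$. Projectivity of $M$ in $\rep(Q)$ lifts $\pi$ through $\epsilon$ to $\tilde{\pi} \colon M \to P_b$; since $(\rad P_b)(b) = 0$, this lift satisfies $\tilde{\pi} \circ g = \mathds{1}_{P_b}$, so $g$ splits and indecomposability yields $M \simeq P_b$. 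But then $P_b \in \rep(Q)$ has $\supp P_b \supseteq p$, which is impossible: the injective maps $\tau_i \colon Q(b, a_{i+1}) \to Q(b, a_i)$, $q' \mapsto \alpha_{i+1} q'$ give a descending chain of finite sets $\tau_0 \tau_1 \cdots \tau_{i-1}(Q(b, a_i)) \subseteq Q(b, a_0)$; the stable value is empty since any thread would force a finite $q_0 \in Q(b, a_0)$ to end with $\alpha_1 \cdots \alpha_i$ for all $i$, and injectivity of the $\tau_j$ then yields some $Q(b, a_i) = \emptyset$, contradicting $\supp P_b \supseteq p$.

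Finally, surjectivity. Setting $N = M/\Im f$, the equation $\rad M = M$ descends to $\rad N = N$. If $\supp N$ contains no right infinite path, Lemma~\ref{lem:ess}(1) makes $\rad N = N$ superfluous, forcing $N = 0$. The hard part is the alternative: $\supp N$ contains a right infinite path $q$, necessarily lying in $\supp M$. Here I would repeat the Stage~1 construction for $q$ with a starting element chosen in $V_q \setminus \Im f(t(q))$ (nonempty thanks to $N \neq 0$ at $t(q)$), yielding an injection $f_q \colon X_{[q]} \hookrightarrow M$ whose image escapes $\Im f$; then Lemma~\ref{lem:sum} applied to the pair $(x_0, y_0)$ should upgrade $\Im f + \Im f_q$ to an internal direct sum $\Im f \oplus \Im f_q \hookrightarrow M$, producing a decomposable projective subrepresentation which, via further heredity-based splitting, contradicts indecomposability of $M$. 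The main obstacle is to guarantee that the stabilised subspace $V_q$ for $q$ is not entirely contained in $\Im f(t(q))$; this requires careful analysis of how the chain defining $V_q$ interacts with the subrepresentation $\Im f$ already constructed.
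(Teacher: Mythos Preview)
Your Stages~1 and~2 are correct and clean; in particular, the argument that $\top M = 0$ via splitting off a $P_b$ and then showing no $P_b$ can support a right infinite path is a nice alternative to anything the paper does explicitly. The trouble is entirely in Stage~3.

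First, Lemma~\ref{lem:sum} is stated and proved for \emph{finitely many} cyclic subrepresentations $M_{x_1},\dots,M_{x_n}$, each isomorphic to some $P_{a_i}$; it does not apply to the pair $(\Im f,\Im f_q)$, which are infinitely generated and isomorphic to $X_{[p]}$, $X_{[q]}$. You would need an infinite version, and the proof of Lemma~\ref{lem:sum} (which compares tops of objects in $\proj(Q)$) does not survive that passage since $\top X_{[p]} = 0$. Second, and more seriously, even granting that $\Im f + \Im f_q$ is an internal direct sum inside $M$, heredity only tells you this subrepresentation is projective in $\rep(Q)$; it does \emph{not} make it a direct summand of $M$. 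Indecomposable objects routinely contain decomposable subobjects, so there is no contradiction here. Your phrase ``further heredity-based splitting'' has no content: you would need a retraction $M \to \Im f$, and nothing in sight produces one. Third, the obstacle you flag yourself --- that the stabilized subspace $V_q$ might sit inside $\Im f(t(q))$ --- is real and you give no mechanism to avoid it.

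The paper sidesteps all of this by \emph{not} trying to prove a single $f$ is surjective. Instead it first passes far enough along $p$ that $\dim M(a_i)$ is constant, so the maps $M(\alpha_i)$ are bijections; then the subrepresentation $X_0$ generated by all the $M(a_i)$ is isomorphic to $X_{[p]}^{\oplus \dim M(a_0)}$. It then builds an \emph{exhaustive} decomposition $M = \bigoplus_{n \ge 0} X_n$ by induction: at each step one enlarges a set of vertices $\Omega_n$ (first by adding all vertices outside $\supp X_0$, then those whose direct predecessors are already controlled), chooses complements to $\bigoplus_{i<n} X_i$ on the new vertices, and lets $X_n$ be the subrepresentation they generate. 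Directness of each finite partial sum \emph{is} checked with Lemma~\ref{lem:sum}, but always applied to finitely many cyclic generators at a time. Since the $\Omega_n$ eventually cover $\supp M$, the sum exhausts $M$, and indecomposability forces $M = X_0$, hence $\dim M(a_0) = 1$ and $M \simeq X_{[p]}$. The key difference is that the paper produces a genuine direct-sum decomposition of $M$ rather than a mere subrepresentation, and this is exactly what your Stage~3 lacks.
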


\begin{proof}
  We observe by Lemma~\ref{lem:M(alpha)} that each right infinite path in $[p]$ is completely contained in $\supp M$. Then the convex hull of any right infinite path in $[p]$ is uniformly interval finite by Lemma~\ref{lem:not-proj}. It follows from Proposition~\ref{prop:Xp-proj} that $X_{[p]}$ is an indecomposable projective object in $\rep(Q)$.

  Assume $p = \alpha_1 \alpha_2 \cdots \alpha_i \cdots$. For each $i \geq 0$, we set $a_i = t(\alpha_{i+1})$. We have that $\dim M(a_i) \geq \dim M(a_{i+1})$ since $M(\alpha_{i+1})$ is an injection. Then $\dim M(a_i)$ will be stable for $i$ large enough. We may assume $\dim M(a_i) = \dim M(a_0)$ for each $i\geq 0$. Then $M(\alpha_i)$ is a bijection for each $i\geq 0$.

  Let $\Omega_0=\set{a_i|i\geq 0}$. We denote by $X_0$ the subrepresentation of $M$ generated by $\bigcup_{a\in\Omega_0} M(a)$.
  We observe that
  \[
    X_0 \simeq X_{[p]}^{\oplus \dim M(a_0)}.
  \]
  Let $\Omega_1 = \Omega_0 \cup Q_0 \setminus \supp X_0$.
  We denote by $X_1$ the subrepresentation of $M$ generated by $\bigcup_{a\in \Omega_1 \setminus \Omega_0} M(a)$.
  We observe that $X_0(a) + X_1(a) = M(a)$ for each $a\in\Omega_1$.

  We claim that $X_0 \cap X_1 = 0$.
  Indeed, let $x \in X_0(a) \cap X_1(a)$ for some vertex $a$.
  Then there exists pairwise different vertices $b_1, b_2, \dots, b_n$ in $\Omega_1 \setminus \Omega_0$ and $y_i \in X_1(b_i)$ for each $1 \leq i \leq n$ such that $x \in \sum_{i=1}^n M_{y_i} (a)$. We may assume $y_i \notin \sum_{j \neq i} M_{y_j} (b_i)$ for any $1 \leq i \leq n$.
  We observe that there exist some $j \geq 0$ such that there does not exist a finite path $u$ with $t(u) = a_j$ and $s(u) = b_i$ for any $1 \leq i \leq n$, since $Q$ is interval finite.
  We can choose $y_{n+1} \in M(a_j)$ such that $x \in M_{y_{n+1}} (a)$. We observe that $y_i \notin \sum_{j \neq i} M_{y_j} (b_i)$ for any $1 \leq i \leq n+1$.
  By Lemma~\ref{lem:sum}, we have that $x = 0$. It follows that $X_0 \cap X_1 = 0$.

  Let $\Omega_2 = \set{a \in Q_0 \middle\vert a^- \subseteq \Omega_1}$.
  For each $a \in \Omega_2 \setminus \Omega_1$,  we let $U_a$ be a complement to $X_0(a) \oplus X_1(a)$ in $M(a)$. Let $X_2$ be the subrepresentation of $M$ generated by $\bigcup_{a \in \Omega_2 \setminus \Omega_1} U_a$.
  We observe that $X_0(a) \oplus X_1(a) \oplus X_2(a) = M(a)$ for each $a \in \Omega_2$.

  We claim that $(X_0 \oplus X_1) \cap X_2=0$.
  Indeed, let $x \in (X_0 \oplus X_1)(a) \cap X_2(a)$ for some vertex $a$.
  Then there exist pairwise different vertices $b_1, b_2, \dots, b_n \in \Omega_2 \setminus \Omega_1$ and $y_i \in X_2 (b_i)$ for each $1 \leq i \leq n$ such that $x \in \sum_{i=1}^n M_{y_i} (a)$. We may assume $y_i \notin \sum_{1 \leq j \leq n, j \neq i} M_{y_j} (b_i)$ for any $1 \leq i \leq n$.
  Similarly, there exist pairwise different vertices $b_{n+1}, b_{n+2}, \dots, b_{n+m} \in \Omega_1$ and $y_i \in (X_0 \oplus X_1) (b_i)$ for each $n+1 \leq i \leq n+m$ such that $x \in \sum_{i=n+1}^{n+m} M_{y_i} (a)$. We may assume $y_i \notin \sum_{n+1 \leq j \leq n+m, j \neq i} M_{y_j} (b_i)$ for any $n+1 \leq i \leq n+m$.
  We observe that $y_i \notin \sum_{j \neq i} M_{y_j} (b_i)$ for any $1 \leq i \leq n+m$, since $\sum_{i=0}^2 X_i(a)$ is an internal direct sum for each $a \in \Omega_2$.
  By Lemma~\ref{lem:sum}, we have that $x = 0$. It follows that $(X_0 \oplus X_1) \cap X_2=0$.

  By induction, for each $n\geq0$, we have $\Omega_n$ and $X_n$ such that $\sum_{i=1}^n X_i$ is an internal direct sum and $(\bigoplus_{i=0}^n X_i) (a) = M(a)$ for each vertex $a\in\Omega_n$.

  We observe that any vertex in $\supp M$ lies in $\Omega_i$ for some $i\geq0$. It follows that $M=\bigoplus_{i=0}^\infty X_i$. Since $M$ is indecomposable, we have that $M=X_0\simeq X_{[p]}$.
\end{proof}

We then obtain the following classification theorem.

\begin{theorem}\label{thm:classify}
  Let $Q$ be an interval finite quiver. Assume that $P$ is an indecomposable projective object in $\rep(Q)$. Then either $P \simeq P_{a}$ for some vertex $a$, or $P \simeq X_{[p]}$ for some equivalence class $[p]$ of right infinite paths, where the convex hull of any right infinite path in $[p]$ is uniformly interval finite.
\end{theorem}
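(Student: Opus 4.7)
The plan is to split on whether $\supp P$ contains a right infinite path. If it does, say $p$, then Proposition~\ref{prop:proj} applies directly and yields at once that the convex hull of every right infinite path in $[p]$ is uniformly interval finite and $P \simeq X_{[p]}$. The substantive case, and the one I would tackle next, is that $\supp P$ contains no right infinite path; the target there is to exhibit a vertex $a$ with $P \simeq P_a$.

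In that case, Lemma~\ref{lem:ess}(1) first gives that $\rad P$ is superfluous in $P$, and in particular $\top P \neq 0$. I would then, for each vertex $b$ with $(\top P)(b) \neq 0$, lift a $k$-basis of $(\top P)(b)$ to $P(b)$; assembling these over all $b$, I index the lifts as $x_i \in P(a_i)$ for $i \in I$ so that $\set{\bar{x}_j \middle\vert a_j = b}$ is a basis of $(\top P)(b)$ for every $b$. The main technical point is to verify the hypothesis of Lemma~\ref{lem:sum}, namely $x_i \notin \sum_{j \neq i} M_{x_j}(a_i)$ for each $i$. Any element of $M_{x_j}(a_i)$ is a $k$-linear combination of values $P(p)(x_j)$ with $p \in Q(a_j, a_i)$; since interval finiteness excludes oriented cycles, paths of positive length send $x_j$ into $(\rad P)(a_i)$, and the only possible length-zero contribution is $p = e_{a_i}$ when $a_j = a_i$. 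Hence $\sum_{j \neq i} M_{x_j}(a_i)$ sits inside the $k$-span of $\set{x_j \middle\vert j \neq i,\, a_j = a_i}$ plus $(\rad P)(a_i)$, and the linear independence of $\set{\bar{x}_j \middle\vert a_j = a_i}$ modulo $\rad P$ forbids $x_i$ from lying there.

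With this in hand, Lemma~\ref{lem:sum} applied to arbitrary finite subfamilies shows that $\sum_{i \in I} M_{x_i}$ is an internal direct sum inside $P$. Since $\rep(Q)$ is hereditary and $P$ is projective, every subrepresentation $M_{x_i}$ is projective, so the canonical morphism $P_{a_i} \to M_{x_i}$ sending $e_{a_i}$ to $x_i$ (from Lemma~\ref{lem:proj-inj}(1)) splits; $P_{a_i}$ being indecomposable with $\End(P_{a_i}) \simeq k$ then forces $M_{x_i} \simeq P_{a_i}$. Finally, $\bigoplus_{i \in I} M_{x_i}$ already surjects onto $\top P$ by construction, so $\bigoplus_{i \in I} M_{x_i} + \rad P = P$, and superfluity of $\rad P$ upgrades this to $\bigoplus_{i \in I} M_{x_i} = P$. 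Indecomposability of $P$ then forces $\abs{I} = 1$, giving $P \simeq P_a$.

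The hardest step I expect is the verification of the hypothesis of Lemma~\ref{lem:sum} when many lifted basis elements sit at the same vertex; once that is settled, heredity of $\rep(Q)$ together with the superfluity of $\rad P$ closes the argument mechanically.
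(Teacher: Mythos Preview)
Your argument is correct. The case split and the appeal to Proposition~\ref{prop:proj} in the presence of a right infinite path are exactly as in the paper. In the remaining case your route diverges from the paper's: you lift an entire basis of $\top P$, invoke Lemma~\ref{lem:sum} on finite subfamilies to obtain an internal direct sum $\bigoplus_{i\in I} M_{x_i}$ of copies of $P_{a_i}$ inside $P$, and then use superfluity of $\rad P$ to see this exhausts $P$, so indecomposability forces $\abs{I}=1$. The paper instead picks a single simple quotient $S\simeq S_a$ of $\top P$, notes that $P_a\to S_a$ is a projective cover, lifts the epimorphism $P\to S$ along it to a split epimorphism $P\to P_a$, and concludes $P\simeq P_a$ by indecomposability. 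The paper's argument is shorter and avoids Lemma~\ref{lem:sum} entirely; yours is more hands-on but has the incidental benefit of producing, before specializing to the indecomposable case, a decomposition $P\simeq\bigoplus_i P_{a_i}$ valid for any projective $P\in\rep(Q)$ whose support contains no right infinite path. One cosmetic point: in your verification of the hypothesis of Lemma~\ref{lem:sum} you switch between $M$ and $P$ for the ambient representation; it would read more cleanly to use $P$ throughout.
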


\begin{proof}
  If $\supp P$ contains a right infinite path, the result follows from Proposition~\ref{prop:proj}.
  Now, we assume that $\supp P$ contains no right infinite paths. Then the canonical epimorphism $P\to \top P$ is a projective cover by Lemma~\ref{lem:ess}(1). In particular, $\top P$ is nonzero. Let $S$ be a simple factor representation of $\top P$. We have an epimorphism $f\colon P\to S$. Assume that $S(a)\neq0$ for some vertex $a$. We have a projective cover $g\colon P_a\to S$.
  It follows that $P_a$ is a direct summand of $P$.
  Since $P$ is indecomposable, we have that $P\simeq P_a$.
\end{proof}

We mention that an indecomposable projective object in $\rep(Q)$ of the form $X_{[p]}$ is not projective in $\Rep(Q)$.
Indeed, if $p=\alpha_1 \alpha_2 \cdots \alpha_i \cdots$ and $a_i = t(\alpha_{i+1})$ for any $i \geq 0$, then the canonical epimorphism $\bigoplus_{i\geq0} P_{a_i} \to X_{[p]}$ is not a retraction.

By the duality $D \colon \rep(Q^\op) \to \rep(Q)$, we can also classify the injective objects in $\rep(Q)$; see the following theorem.
One can also obtain the classification of injective objects in $\rep(Q)$ analogous to the above process based on Lemmas~\ref{lem:proj-inj}(2) and \ref{lem:Yp-flat'}.

\begin{theorem}\label{thm:classify'}
  Let $Q$ be an interval finite quiver. Assume that $I$ is an indecomposable injective object in $\rep(Q)$. Then either $I \simeq I_{a}$ for some vertex $a$, or $I \simeq Y_{[p]}$ for some equivalence class $[p]$ of left infinite paths, where the convex hull of any left infinite path in $[p]$ is uniformly interval finite.
  \qed
\end{theorem}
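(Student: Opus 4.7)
The plan is to deduce this from the already-established Theorem~\ref{thm:classify} by invoking the duality $D\colon \rep(Q^\op) \to \rep(Q)$.

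First I would observe that $Q^\op$ is again interval finite, since $Q^\op(a,b) = Q(b,a)$, so Theorem~\ref{thm:classify} applies to $\rep(Q^\op)$ and classifies the indecomposable projective objects there. Next, since $D$ is an exact contravariant equivalence between $\rep(Q^\op)$ and $\rep(Q)$ (its quasi-inverse is the analogous functor in the other direction), it sends the indecomposable projective objects of $\rep(Q^\op)$ bijectively onto the indecomposable injective objects of $\rep(Q)$. Hence for any indecomposable injective $I \in \rep(Q)$, there exists an indecomposable projective $P \in \rep(Q^\op)$ with $I \simeq DP$.

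Now I would apply Theorem~\ref{thm:classify} to $P$: either $P \simeq P_a^\op$ for some vertex $a$ (in which case $I \simeq D P_a^\op = I_a$ by the very definition of $I_a$ in Section~\ref{sec:rep}), or $P \simeq X_{[q]}$ for some equivalence class $[q]$ of right infinite paths in $Q^\op$ whose convex hull is uniformly interval finite. The bijection $p \leftrightarrow p^\op$ between left infinite paths in $Q$ and right infinite paths in $Q^\op$ is compatible with the equivalence relation, so $[q] = [p^\op]$ for a unique equivalence class $[p]$ of left infinite paths in $Q$. Then $I \simeq D X_{[p^\op]} = Y_{[p]}$ by the definition of $Y_{[p]}$ given at the end of Section~\ref{sec:Xp-Yp}.

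The last piece to verify is that the uniform interval finiteness hypothesis translates correctly: the convex hull of $p^\op$ in $Q^\op$ is the opposite of the convex hull of $p$ in $Q$, and since $\abs{Q^\op(a,b)} = \abs{Q(b,a)}$ for all vertices, a subquiver is uniformly interval finite if and only if its opposite is. Therefore the condition on $[p^\op]$ in $Q^\op$ is equivalent to the condition on $[p]$ in $Q$ stated in the theorem. The only real obstacle is bookkeeping these dualities cleanly, since everything substantial has already been done in Theorem~\ref{thm:classify}; the argument is essentially formal.
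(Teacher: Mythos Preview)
Your proposal is correct and follows exactly the approach the paper intends: the paper states the theorem with a \qed, having just remarked that it follows from Theorem~\ref{thm:classify} via the duality $D\colon \rep(Q^\op)\to\rep(Q)$. Your write-up simply makes explicit the bookkeeping (that $Q^\op$ is interval finite, that $D$ carries indecomposable projectives to indecomposable injectives, that $I_a = DP_a^\op$ and $Y_{[p]} = DX_{[p^\op]}$, and that uniform interval finiteness is preserved under passing to the opposite quiver) which the paper leaves implicit.
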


We characterize the indecomposable projective objects and indecomposable injective objects in $\rep(Q)$ for the following infinite quiver of $A_\infty^\infty$ type.

\begin{example}
  Let $Q$ be the following quiver of $A_\infty^\infty$ type
  \[\begin{tikzcd}
    \cdots
    & \underset{-1}{\circ} \lar["\alpha_{-1}"']
    & \underset{ 0}{\circ} \lar["\alpha_0"']
    & \underset{ 1}{\circ} \lar["\alpha_1"']
    & \cdots. \lar["\alpha_2"']
  \end{tikzcd}\]

  For every vertex $i$, we denote by $P_i$ the corresponding indecomposable projective representation, and by $I_i$ the corresponding indecomposable injective representation.

  Set $p = \alpha_1 \alpha_2 \cdots \alpha_i \cdots$. Then $X_{[p]}$ is isomorphic to the representation
  \[\begin{tikzcd}
    \cdots
    & k \lar["1"']
    & k \lar["1"']
    & k \lar["1"']
    & \cdots. \lar["1"']
  \end{tikzcd}\]
  By Proposition~\ref{prop:Xp-proj}, we have that $X_{[p]}$ is an indecomposable projective object in $\rep(Q)$.
  By Theorem~\ref{thm:classify}, we have that
  \[
    \set{X_{[p]}} \cup \set{P_i \middle\vert i \in Q_0}
  \]
  is a complete set of indecomposable projective objects in  $\rep(Q)$.

  Set $q = \cdots \alpha_j \cdots \alpha_{-2} \alpha_{-1}$. Then $Y_{[q]} \simeq X_{[p]}$. By Theorem~\ref{thm:classify'}, we have that
  \[
    \set{Y_{[q]}} \cup \set{I_i \middle\vert i \in Q_0}
  \]
  is a complete set of indecomposable injective objects in $\rep(Q)$.
  \qed
\end{example}

It is worth mentioning that $\rep(Q)$ may not contain enough projective objects. Indeed, we have the following characterization.

\begin{proposition}\label{prop:proj-cover}
  The following statements are equivalent.
  \begin{enumerate}
    \item
      $\rep(Q)$ contains enough projective objects.
    \item
      Every object in $\rep(Q)$ admits a projective cover.
    \item
      Every vertex in $Q$ admits only finitely many predecessors.
  \end{enumerate}
\end{proposition}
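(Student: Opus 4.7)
The plan is to prove $(3) \Rightarrow (2) \Rightarrow (1) \Rightarrow (3)$, with $(2) \Rightarrow (1)$ being immediate since a projective cover is in particular an epimorphism from a projective object.

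For $(3) \Rightarrow (2)$, the key preliminary observation is that $(3)$ forces $Q$ to contain no right infinite paths: the sources of a right infinite path $\alpha_1 \alpha_2 \cdots$ are pairwise distinct predecessors of $t(\alpha_1)$ (distinct because interval finite implies no oriented cycles). Hence by Lemma~\ref{lem:ess}(1) the radical is superfluous in every object of $\rep(Q)$. Given $M \in \rep(Q)$, the quotient $\top M$ is semisimple (every arrow of $Q$ acts as zero on it), so $\top M \simeq \bigoplus_{a \in Q_0} S_a^{n_a}$ with $n_a = \dim (\top M)(a) < \infty$. Using $\Hom(P_a, M) \simeq M(a)$ from Lemma~\ref{lem:proj-inj}(1), I would lift a basis of $(\top M)(a)$ to $n_a$ morphisms $P_a \to M$ and assemble them into $\pi \colon P \to M$, where $P = \bigoplus_{a \in Q_0} P_a^{n_a}$. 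Hypothesis $(3)$ ensures via Proposition~\ref{prop:inf-sum} that $P$ exists in $\rep(Q)$, since for each vertex $b$ only finitely many $a$ are predecessors of $b$ (and each $n_a$ is finite); $P$ is projective in $\rep(Q)$ by a direct lifting argument on the summands. By construction $\top \pi$ is an isomorphism, so $\Im \pi + \rad M = M$ and $\Ker \pi \subseteq \rad P$; combining with superfluousness of $\rad M$ in $M$ and of $\rad P$ in $P$ (both from Lemma~\ref{lem:ess}(1)), $\pi$ is surjective with superfluous kernel, hence a projective cover.

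For $(1) \Rightarrow (3)$, I would argue the contrapositive. If some vertex $b$ admits infinitely many predecessors $\{a_i\}_{i \in I}$, consider $M = \bigoplus_{i \in I} S_{a_i}$, which lies in $\rep(Q)$ since the $S_{a_i}$ have pairwise distinct one-point supports. Suppose $\pi \colon P \to M$ is an epimorphism with $P$ projective in $\rep(Q)$. By Lemma~\ref{lem:dec} combined with Theorem~\ref{thm:classify}, $P$ decomposes as a direct sum of indecomposable projectives of type $P_c$ or $X_{[q]}$. Since $\rad X_{[q]} = X_{[q]}$ implies $\top X_{[q]} = 0$, only the $P_c$-summands contribute to $\top P$, and the induced epimorphism $\top \pi \colon \top P \twoheadrightarrow \top M = M$ forces every $S_{a_i}$ to appear as such a summand. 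Hence $P$ contains infinitely many direct summands $P_{a_i}$ with $P_{a_i}(b) \neq 0$, contradicting the existence of the direct sum $P$ in $\rep(Q)$ by Proposition~\ref{prop:inf-sum}.

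The main technical point is setting up the projective cover correctly in $(3) \Rightarrow (2)$: one must check that the naive candidate $\bigoplus_a P_a^{n_a}$ lies in $\rep(Q)$ (using $(3)$) and is still projective there, and then verify superfluousness of the kernel. The converse $(1) \Rightarrow (3)$ becomes clean once the classification is invoked, as the indecomposable projectives $X_{[q]}$ have vanishing top and so cannot help cover a semisimple target, thereby concentrating the obstruction on the $P_c$-summands where Proposition~\ref{prop:inf-sum} provides the contradiction.
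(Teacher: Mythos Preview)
Your proof is correct and follows the same cycle $(3)\Rightarrow(2)\Rightarrow(1)\Rightarrow(3)$ as the paper, with the $(3)\Rightarrow(2)$ step essentially identical to the paper's construction. The genuine difference lies in $(1)\Rightarrow(3)$: you invoke the classification Theorem~\ref{thm:classify} (together with Lemma~\ref{lem:dec}) to decompose the hypothetical projective $P$ and then discard the $X_{[q]}$-summands via $\top X_{[q]}=0$, whereas the paper argues more directly. Namely, for each $a_i$ the paper lifts the epimorphism $P\to S_{a_i}$ along the projective cover $P_{a_i}\to S_{a_i}$ to obtain a split epimorphism $P\to P_{a_i}$, concluding that each $P_{a_i}$ is a direct summand of $P$ and hence $\dim P(b)$ is unbounded. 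The paper's route is lighter---it needs only the elementary fact that $P_{a_i}\to S_{a_i}$ is essential and avoids the full classification---while your route is perhaps more conceptually transparent once Theorem~\ref{thm:classify} is in hand, and its placement after that theorem makes the dependence harmless.
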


\begin{proof}
  (1) $\Rightarrow$ (3).
  Assume that a vertex $b$ admits infinitely many predecessors $a_i$ for $i \in \Lambda$. Then there are no projective objects $P$ in $\rep(Q)$ with some epimorphism $P \to \bigoplus_{i \in \Lambda} S_{a_i}$.
  Indeed, otherwise each $P_{a_i}$ is a direct summand of $P$. Then $P(b)$ is not finite dimensional, which is a contradiction.

  (3) $\Rightarrow$ (2).
  Assume every vertex in $Q$ admits only finitely many predecessors. In particular, $Q$ contains no right infinite paths.

  Let $M$ be a representation in $\rep(Q)$.
  Assume
  \(
    \top M \simeq \bigoplus_{i \in \Lambda} S_{a_i}
  \)
  for some index set $\Lambda$.
  Then we obtain the following commutative diagram
  \[\begin{tikzcd}
    \bigoplus_{i \in \Lambda} P_{a_i} & \bigoplus_{i \in \Lambda} S_{a_i}\\
    M                                & \top M.
    \ar[from=1-1, to=1-2, two heads, "g"]
    \ar[from=1-1, to=2-1, dashed, "h"']
    \ar[from=1-2, to=2-2, "\simeq"]
    \ar[from=2-1, to=2-2, two heads, "f"]
  \end{tikzcd}\]

  By the assumption, at most finitely many $a_i$ are predecessors of any given vertex $b$.
  Then $\bigoplus_{i \in \Lambda} P_{a_i} (b)$ is finite dimensional, since $Q$ is interval finite. That is to say, $\bigoplus_{i \in \Lambda} P_{a_i}$ lies in $\rep(Q)$.

  We observe that
  \(
    \rad \bigoplus_{i \in \Lambda} P_{a_i} \simeq \bigoplus_{i \in \Lambda} \rad P_{a_i}
  \).
  Since $Q$ contains no right infinite paths, then $f$ and $g$ are essential epimorphisms by Lemma~\ref{lem:ess}. It follows that $h$ is an epimorphism. Moreover, it is an essential epimorphism since so is $g$. Then it is a projective cover in $\rep(Q)$.

  (2) $\Rightarrow$ (1).
  This is straightforward.
\end{proof}

We mention that, if the equivalent conditions in the preceding proposition hold, there are no indecomposable projective objects of the form $X_{[p]}$ in $\rep(Q)$.

Similarly, one can prove that $\rep(Q)$ admits enough injective objects if and only if every vertex in $Q$ admits only finitely many successors.

\section{A characterization for projective objects in \texorpdfstring{$\rep(Q)$}{rep(Q)}}
\label{sec:flat}

Let $Q$ be an interval finite quiver.
We strengthen Proposition~\ref{prop:flat} for countably generated flat representations in $\Rep(Q)$ as follows.

\begin{lemma}\label{lem:countable}
  A countably generated flat representation in $\Rep(Q)$ is a direct limit of finitely generated projective subrepresentations over the directed set $(\N, \leq)$.
\end{lemma}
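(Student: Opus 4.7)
The plan is to invoke Proposition~\ref{prop:flat} to write the flat representation $M$ as $\varinjlim_{i \in \Lambda} M_i$ with each $M_i$ a finitely generated projective subrepresentation of $M$, and then to extract a countable cofinal chain inside $\Lambda$. A crucial feature of the presentation furnished by Proposition~\ref{prop:flat} is that all structure morphisms are inclusions, so that $M$ coincides with the union $\bigcup_{i \in \Lambda} M_i$ of the $M_i$'s viewed inside $M$; this is what will let a cofinal countable chain recover $M$.

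Next I would invoke countable generation: by hypothesis there exist vertices $b_n$ and elements $x_n \in M(b_n)$ for $n \geq 1$ such that $M$ is generated by $\set{x_n \middle\vert n \geq 1}$. Since $M = \bigcup_{i \in \Lambda} M_i$, each $x_n$ already belongs to some $M_{i_n}(b_n)$. Using that $\Lambda$ is directed, I would inductively choose $j_1 \leq j_2 \leq \cdots$ in $\Lambda$ satisfying $j_n \geq i_1, i_2, \dots, i_n$, so that $M_{j_n}$ contains $x_1, x_2, \dots, x_n$.

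The ascending chain $M_{j_1} \subseteq M_{j_2} \subseteq \cdots$ of finitely generated projective subrepresentations of $M$ then contains every generator of $M$, so its union equals $M$. Restricting the original direct system to this chain yields $M \simeq \varinjlim_{n \in \N} M_{j_n}$, the required presentation over $(\N, \leq)$.

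I do not foresee a substantive obstacle here: the argument is a standard cofinal-chain reduction. The only point requiring care is to remember that the $M_i$ supplied by Proposition~\ref{prop:flat} are genuine subrepresentations of $M$, not merely abstract direct-system objects, which is precisely what makes the union operation meaningful and lets countable generation do the work.
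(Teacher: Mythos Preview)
Your proposal is correct and matches the paper's own proof essentially line for line: invoke Proposition~\ref{prop:flat} to realize $M$ as a union of finitely generated projective subrepresentations, pick a countable generating set, locate each generator inside one of these subrepresentations, and then use directedness of $\Lambda$ to build an increasing $\N$-indexed chain whose union is all of $M$. The only cosmetic difference is notation.
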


\begin{proof}
  Assume that $M$ is a countably generated flat representation.
  By Proposition~\ref{prop:flat}, we may assume that $M$ is a direct limit of subrepresentations $P_\lambda$ lying in $\proj(Q)$ over some directed set $(\Lambda, \leq)$.
  Since $M$ is countably generated, there exist some vertices $a_i$ for $i \geq 0$ and some $x_i \in M(a_i)$ such that $M = \sum_{i \geq 0} M_{x_i}$.

  We observe that for any $i \geq 0$, there exists some $\lambda_i \in \Lambda$ such that $x_i \in P_{\lambda_i} (a_i)$. Then $M_{x_i} \subseteq P_{\lambda_i}$.
  Let $\lambda'_0 = \lambda_0$, and choose some $\lambda'_{i+1}$ with $\lambda'_{i+1} \geq \lambda'_i, \lambda_{i+1}$ for any $i \geq 0$ inductively.
  Then for any $1 \leq j \leq i$, we have that $x_j \in P_{\lambda'_i} (a_j)$ and hence $M_{x_j} \subseteq P_{\lambda'_j} \subseteq P_{\lambda'_i}$.
  We observe that $(P_{\lambda'_i}, \subseteq)$ forms a direct system over the directed set $(\N, \leq)$. Then $M = \sum_{i \geq 0} M_{x_i} = \bigcup_{i \geq 0} P_{\lambda'_i}$. It implies that $M = \varinjlim P_{\lambda'_i}$. Then the result follows since $P_{\lambda'_i} \in \proj(Q)$ for any $i \geq 0$.
\end{proof}

Then we obtain the following relationship between projective objects in $\rep(Q)$ and flat representations lying in $\rep(Q)$.
This is analogous to the classical result \cite[Theorem~2.2]{Drinfeld2006Infinite} in module categories, which is due to \cite{Kaplansky1958Projective} and \cite{RaynaudGruson1971Criteres}.

\begin{theorem}\label{thm:flat}
  Let $Q$ be an interval finite quiver and let $M$ be a representation in $\rep(Q)$. Then $M$ is projective in $\rep(Q)$ if and only if $M$ is a direct sum of countably generated flat representations in $\Rep(Q)$.
\end{theorem}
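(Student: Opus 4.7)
The plan is to combine Theorem~\ref{thm:classify}, Lemma~\ref{lem:dec}, and Lemma~\ref{lem:countable}, each of which already carries most of the work, so that the theorem reduces to a short bookkeeping argument on indecomposable summands.

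For the ``only if'' direction, I would apply Lemma~\ref{lem:dec} to write $M = \bigoplus_{i \in \Lambda} M_i$ with each $M_i$ indecomposable and $\End(M_i)$ local. Because $\rep(Q)$ is abelian and direct summands of projective objects are projective, each $M_i$ is an indecomposable projective object in $\rep(Q)$. Theorem~\ref{thm:classify} then gives $M_i \simeq P_a$ for some vertex $a$, or $M_i \simeq X_{[p]}$ for some equivalence class $[p]$ whose convex hull is uniformly interval finite. In both cases $M_i$ is countably generated: $P_a$ is even finitely generated by $e_a$, and $X_{[p]}$ is the union $\bigcup_{i \geq 0} \Im \phi_i$ of finitely generated subrepresentations by Lemma~\ref{lem:Xp-flat}. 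Both are also flat in $\Rep(Q)$: $P_a$ is projective in $\Rep(Q)$ hence flat, while $X_{[p]}$ is a direct limit of finitely generated projectives by Lemma~\ref{lem:Xp-flat}, and direct limits of flat representations are flat.

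For the ``if'' direction, suppose $M = \bigoplus_{i \in \Lambda} M_i$ with each $M_i$ countably generated flat in $\Rep(Q)$. Since $M \in \rep(Q)$ and each $M_i$ is a direct summand of $M$, we have $M_i \in \rep(Q)$. By Lemma~\ref{lem:countable}, each $M_i$ is a direct limit over the countable directed set $(\N, \leq)$ of subrepresentations lying in $\proj(Q) \subseteq \rep(Q)$. Lemma~\ref{lem:flat-proj} then yields that each $M_i$ is projective in $\rep(Q)$. Finally, the coproduct $\bigoplus_{i \in \Lambda} M_i$ exists in $\rep(Q)$ since it equals $M$, and a coproduct of projective objects is projective whenever it exists; hence $M$ is projective in $\rep(Q)$.

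The main conceptual hurdles are already behind us: first, the classification of indecomposable projectives (Theorem~\ref{thm:classify}), which identifies the precise form of each summand; and second, the passage through Lemma~\ref{lem:countable} and Lemma~\ref{lem:flat-proj}, which rested on the Mittag-Leffler argument of Proposition~\ref{prop:ft}. With those results available, I do not anticipate a substantive obstacle in assembling the two directions above.
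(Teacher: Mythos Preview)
Your proposal is correct and matches the paper's proof essentially line for line: both directions use Lemma~\ref{lem:dec} and Theorem~\ref{thm:classify} for necessity, and Lemma~\ref{lem:countable} followed by Lemma~\ref{lem:flat-proj} for sufficiency. You supply a bit more justification than the paper (e.g., spelling out why $P_a$ and $X_{[p]}$ are countably generated and flat, and why each $M_i$ lies in $\rep(Q)$), but the argument is the same.
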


\begin{proof}
  For the sufficiency, we assume $M = \bigoplus_{i \in \Lambda} M_i$ for some indexed set $\Lambda$, such that each $M_i$ is a countably generated flat representation in $\Rep(Q)$. Lemma~\ref{lem:countable} implies that $M_i$ is a direct limit of representations lying in $\proj(Q)$ over $\N$. It follows from Lemma~\ref{lem:flat-proj} that $M_i$ is projective in $\rep(Q)$. Then so is their direct sum, i.e., $M$ is projective in $\rep(Q)$.

  For the necessity, we assume $M$ is projective in $\rep(Q)$.
  By Lemma~\ref{lem:dec}, we have a decomposition $M = \bigoplus_{i \in \Lambda} M_i$ for some indexed set $\Lambda$, such that each $M_i$ is indecomposable.
  Then each $M_i$ is a projective object in $\rep(Q)$.
  Theorem~\ref{thm:classify} implies that either $M_i \simeq P_a$ for some vertex $a$ or $M_i \simeq X_{[p]}$ for some equivalence class $[p]$ of right infinite paths. Then the result follows since both $P_a$ and $X_{[p]}$ are countably generated flat representations in $\Rep(Q)$.
\end{proof}

\begin{corollary}
  Let $Q$ be a connected strongly locally finite quiver and let $M$ be a representation in $\rep(Q)$. Then $M$ is projective in $\rep(Q)$ if and only if $M$ is a flat representation in $\Rep(Q)$.
\end{corollary}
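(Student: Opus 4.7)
The plan is to reduce both directions to Theorem~\ref{thm:flat}; the only specific property of a connected strongly locally finite quiver that actually enters is countability of $Q_0$, which in turn forces every representation in $\rep(Q)$ to be countably generated. For the necessity, if $M$ is projective in $\rep(Q)$, Theorem~\ref{thm:flat} gives a decomposition $M \simeq \bigoplus_{i \in \Lambda} M_i$ whose summands $M_i$ are countably generated flat representations in $\Rep(Q)$. By the remark following Proposition~\ref{prop:inf-sum}, this direct sum in $\rep(Q)$ coincides with the direct sum computed in $\Rep(Q)$, and since flatness is preserved by arbitrary direct sums in $\Rep(Q)$, $M$ itself is flat. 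This half uses neither connectedness nor local finiteness.

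For the sufficiency, suppose $M \in \rep(Q)$ is flat in $\Rep(Q)$. First I would verify that $Q_0$ is countable: pick any vertex $a_0$ and let $S_n$ be the set of vertices at distance at most $n$ from $a_0$ in the underlying graph. Local finiteness inductively forces each $S_n$ to be finite, and connectedness gives $Q_0 = \bigcup_{n \geq 0} S_n$, which is therefore countable. Since each $M(a)$ is finite dimensional, picking a basis of $M(a)$ at every vertex produces a countable family $\set{x_\lambda}_{\lambda \in \Lambda}$ of elements; by Lemma~\ref{lem:proj-inj}(1) the associated maps $P_{a_\lambda} \to M$ sending $e_{a_\lambda}$ to $x_\lambda$ assemble into an epimorphism $\bigoplus_{\lambda \in \Lambda} P_{a_\lambda} \to M$ with $\Lambda$ countable, so $M$ is countably generated. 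Applying the sufficiency half of Theorem~\ref{thm:flat} to the trivial one-term decomposition $M = M$ then yields that $M$ is projective in $\rep(Q)$.

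No genuine obstacle is anticipated: the substantive content sits in Theorem~\ref{thm:flat}, and the countability argument for $Q_0$ is elementary. An alternative route avoiding the full classification would be to combine Lemma~\ref{lem:countable} with Lemma~\ref{lem:flat-proj} directly, but the plan above is shorter.
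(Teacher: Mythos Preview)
Your proposal is correct and follows essentially the same route as the paper: both arguments observe that a connected strongly locally finite quiver has countable vertex set, deduce that every object of $\rep(Q)$ is countably generated, and then invoke Theorem~\ref{thm:flat} together with closure of flatness under direct sums. You have simply spelled out the countability argument and the construction of the generating epimorphism that the paper leaves implicit.
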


\begin{proof}
  We observe that a connected strongly locally finite quiver is countable. Then each pointwise finite dimensional representation of $Q$ is countably generated. We mention that flat representations in $\Rep(Q)$ are closed under direct sums. Then the result is a direct consequence of Theorem~\ref{thm:flat}.
\end{proof}

\section*{Acknowledgements}

The author is very grateful to Professor~Xiao-Wu Chen for his encouragement and many helpful suggestions.
The author also thanks Professor~Yu Ye and Professor~Shiping Liu for some suggestions, and thanks Doctor~Zhe Han, Doctor~Bo Hou and Doctor~Dawei Shen for some discussions.
He would like to thank the referee for many helpful suggestions and comments.


\end{document}